\newtheorem{theorem}{Theorem}[section]
\newtheorem{proposition}{Proposition}[section]
\newtheorem{lemma}{Lemma}[section]
\newtheorem{definition}{Definition}[section]
\newtheorem{remark}{Remark}[section]
\numberwithin{equation}{section}
\numberwithin{lemma}{section}
\numberwithin{theorem}{section}
\numberwithin{definition}{section}
\numberwithin{proposition}{section}
\newcommand{\R}{\mathbb{R}}
\begin{document}
\title{Singular Ricci solitons and their stability under the Ricci flow.}
\author{Spyros Alexakis\thanks{Mathematics Department, University of Toronto. Email: alexakis@math.toronto.edu}\and
Dezhong Chen\thanks{Market Risk Measurement, Scotiabank. Email: dezhong.chen@scotiabank.com}\and
 Grigorios Fournodavlos\thanks{Mathematics Department, University of Toronto. Email: grifour@math.toronto.edu}}
\date{}
\maketitle
%

\noindent
%
\begin{abstract}
We introduce certain spherically symmetric singular Ricci solitons and
study their stability under the Ricci flow from a dynamical PDE point of view.
The solitons in question exist for all dimensions $n+1\ge 3$, and all have a  point singularity where the curvature blows up;
their evolution under the Ricci flow is in sharp contrast to the evolution of their smooth
counterparts. In particular, the family of diffeomorphisms associated with the Ricci flow ``pushes away'' from the singularity
causing the  evolving soliton  to open up immediately  becoming an incomplete (but non-singular) metric.
In the second part of this paper we study the local-in time stability of this dynamical evolution,
under spherically symmetric perturbations of the initial soliton metric.
We prove a local well-posedness result for the Ricci flow near the singular initial data,
which in particular implies  that the ``opening up'' of the singularity persists for the perturbations also.
\end{abstract}
\tableofcontents
\section{Introduction}\label{intro}
\noindent

The question of defining solutions of geometric evolution equations with singular
initial data is an interesting challenge and has been studied in recent years for a variety of
parabolic geometric PDE. For the Ricci flow, a number of solutions
have been proposed in various settings.  Simon \cite{S1}
 obtained solutions for the Ricci flow for ${\mathcal C}^0$  initial metrics that can be uniformly approximated 
by smooth metrics with bounded sectional curvature. Koch and Lamm \cite{KL} 
showed existence and uniqueness for the Ricci-DeTurck flow for initial data that are $L^\infty$-close 
to the Euclidean metric. 
Angenent, Caputo and Knopf \cite{ACK} considered initial data 
of neck-pinch type.\footnote{In particular these initial data can form 
 in the evolution of a smooth  spherically symmetric initial metric, as demonstrated in \cite{AK1,AK2}.}
They constructed a solution to the flow starting from  this singular initial metric, 
for which the singularity is immediately smoothed out.
This can be thought of as a (very weak) notion of surgery in that the method of proof
relies on a gluing construction to show the existence of such a solution,  but not uniqueness. 
Cabezas-Rivas and Wilking \cite{CRW} have obtained solutions of the Ricci flow on open manifolds with 
nonnegative (and possibly unbounded) complex sectional curvature, using the Cheeger-Gromoll convex
 exhaustion of such manifolds. 

More results have been obtained in the K\"ahler case and in dimension 2, where 
the Ricci flow equation reduces to a scalar heat equation; we list a few examples:   Chen,
Tian and Zhang \cite{CTZ} consider the K\"ahler-Ricci flow for initial data with ${\mathcal C}^{1,1}$
potentials and construct solutions to the Ricci flow which immediately smooth out.
The argument is based on an approximation of the initial potential by smoother ones. Finally,
more results have been obtained in dimension 2 (see \cite{IMS} for a survey): Giesen and Topping \cite{GT1} 
(building on earlier work by Topping \cite{T})
have given a construction of Ricci flows on surfaces starting from any (incomplete) initial metric
whose curvature is unbounded; these solutions become instantaneously complete and are unique
in the {\it maximally stretched} class that they introduce. More recently 
yet \cite{GT2}, they constructed examples of immortal solutions 
of the flow (on surfaces) which start out with a smooth initial metric, then the 
supremum of the Gauss curvature becomes infinite for some finite amount of time 
before becoming finite again. 

This paper considers a special class of singular initial metrics and produces examples of Ricci flow whose behavior is different
from those listed above. Our initial metrics  are {\it close}
to certain singular gradient Ricci solitons that we introduce separately in the first part of this paper.
The solitons exist in all dimensions $n+1\ge 3$.
Our main result is that for small enough perturbations of the singular Ricci solitons,
the Ricci flow admits a unique solution, up to some time $T>0$, within a natural class of evolving metrics
which stay close (as measured in a certain weighted Sobolev space)
to the evolving Ricci solitons. In other words, we obtain a local well-posedness result
for the Ricci flow for initial data with the same singularity profile as our Ricci solitons.

The solitons that we introduce (and in fact the perturbations that we consider)
all have $SO(n+1,\mathbb{R})$-symmetry.
In particular, the soliton metric at the initial time $t=0$  can be written in the form:
\begin{align*}
g_{\rm sol}=dx^2+\psi(x)^2 g_{\mathbb{S}^n},
\end{align*}
where $x\in (0,+\infty)$,  for certain steady
solitons (these we call half-complete, since they are complete towards $x\to+\infty$)
and $x\in(0,\delta)$, $\delta<+\infty$ in the
remaining cases\footnote{So in the general case the solitons can be extended to a smooth metric with boundary at $x=\delta$};
 here $g_{\mathbb{S}^n}$
denotes the canonical metric of the unit $n$-sphere.
In all cases
the function $\psi(x)$ is a positive smooth function
and moreover $\psi(x)\to 0$ as $x\rightarrow 0^+$, with leading order behaviour
$\psi\sim x^{\frac{1}{\sqrt{n}}}$.
In particular, the (incomplete) metric above can be extended to a
complete  ${\mathcal C}^0$ (in fact ${\mathcal C}^{\frac{1}{\sqrt{n}}}$) metric
at $x=0$, but the extended  metric will not be
of class  ${\mathcal C}^1$.
We remark that
(in the half-complete case) our (singular) solitons are complete Riemannian manifolds  towards $+\infty$, with an asymptotic profile there
that  matches  the Bryant soliton. For the rest of this introduction we discuss only the half-complete case.
\newline

Our first observation is that the evolution of the singular solitons themselves under the Ricci flow
is in sharp contrast with the behavior of their smooth counterparts.
As we note in Section \ref{evmet} below,
the evolution of the Ricci soliton $g_{\rm sol}$ is given by a 1-parameter family of radial\footnote{``Radial''
here and furtherdown means that the diffeomorhpism, for each $t\ge0$, depends only
on the parameter  $x\in(0,\infty)$.} diffeomorphisms
$\rho_t:(0,+\infty)\times\mathbb{S}^n\to(0,+\infty)\times\mathbb{S}^n$, $t\ge0$,
where $\rho_0={\rm Id}$ and such that the pullback $g(t)=\rho_t^*(g_{\rm sol})$ solves the Ricci flow
\begin{align*}
\partial_t g=-2{\rm Ric}(g),&&g(0):=g_{\rm sol}.
\end{align*}
However, the map $\rho_t$ is {\it not} surjective in this case. In fact,
for each $t>0$, $\rho_t(0,\infty)=(m(t),+\infty)$ where $m(t)>0$ is non-decreasing in $t$.
In other words the flow $\rho_t$  {\it pushes away} from the singular point $x=0$.
Thus, for each $t>0$  $(M,g(t))$ can be extended to a
smooth manifold with boundary, where the induced metric on the
boundary is that of a round sphere of radius $\lim_{x\to 0^+}\psi(\rho_t(x))>0$.
One can then visualize the evolving soliton metric $g(t)$ backwards in time: Starting at time $t=1$
it contains the portion of the original soliton corresponding to $x>m(t)$, and its
 boundary at $x=m(t)$  shrinks down, as $t\rightarrow0^+$,
to the singular metric $g_{\rm sol}$.

The perturbation problem that we consider is still within the spherically symmetric category. In
particular, the initial metrics we consider are in the form
\begin{align*}
\tilde{g}=dx^2+\tilde{\psi}^2(x)g_{\mathbb{S}^n}
\end{align*}
which is such that if we let
\begin{align*}
\xi=\frac{\tilde{\psi}}{\psi}-1,
\end{align*}
then $\xi$  is in a weighted $H^1$-Sobolev space
\begin{align*}
\xi\in H^1(0,+\infty)&&\int_0^1\frac{\xi^2}{x^{2\alpha}}+\frac{\xi_x^2}{x^{2\alpha-2}}dx<\infty
\end{align*}
for a large enough constant $\alpha$ which we determine below.
In particular, this assumption forces the perturbation $\tilde{g}$ to agree, at $t=0$,
with $g_{\rm sol}$ (asymptotically as $x\rightarrow0^+$) to a very high order, thus controlling the
asymptotics of the curvature tensor near the singularity.
We then show that (subject to suitable Dirichlet boundary conditions at $x=0$, $t\ge0$) there exists a {\it unique} evolving metric $\tilde{g}(t)$, $t\in[0,T]$,
solving the Ricci flow equation, and which stays ``close'' (measured
in  a suitable weighted $H^1$-space, the weight depending on both the spatial and the time variable)
to the evolving soliton metric; in particular it exhibits the same ``opening up''
behavior of the initial singularity. The precise statement can
be found in Theorem \ref{gensol}.

It should be stressed at this point that our work here {\it does not} have direct
bearing on the issue of ``flowing through singularities'' that form in finite time under
the Ricci flow, (as studied, for example, in \cite{ACK}),
at least for closed manifolds. Indeed, it is well known that
for such manifolds the minimum of the scalar curvature is a non-decreasing function
under the Ricci flow; however the scalar curvature of the solitons we consider (and of their perturbations)
converges to $-\infty$ at the singular point ($x=0$). Nonetheless, the issue of
proving well-posedness for geometric evolution equations with singular initial data
is an important challenge in many parts of this subject.
In fact, from a purely PDE point of view it seems natural to first attempt this well-posedness
in the neighborhood of solitons
(or more generally stationary solutions of the flow, the stationarity being
understood in a suitable sense depending on the context).
It is hoped that the methods developed herein will
prove useful in such pursuits in the future.

\subsection{Outline of the ideas}
\noindent

Now, we briefly outline the sections of the paper and the challenges that each addresses.
In Section \ref{coh1RS} we introduce the (singular) spherically symmetric  Ricci solitons that we consider.
The study of these solitons (all of which are gradient solitons, i.e., the
one parameter family of diffeomorphisms associated to them is generated
by the gradient of a function) follows the method presented in \cite[Chapter 1]{RFTA},
originally developed by R. Bryant. In the class of spherically symmetric metrics,
the gradient Ricci soliton equation reduces to a second order ODE system,
which can be transformed into a more tractable first order system in parameters $(W,X,Y)$  via a transformation that we review in (\ref{trans}).
Knowledge of the variables $W,X,Y$ in the parameter $y$ allows us to recover the metric component
$\psi$ and the gradient $\omega$ of the potential function $\phi$ of (\ref{ode}) in the parameter $x$.
In the case of steady solitons, the system (\ref{ode2}) in fact reduces to a $2\times2$ system; see \S\ref{steady}.
We provide a description of the trajectories in the $X,Y$-plane that correspond
to our singular solitons and compare them to the Bryant soliton.
In particular, we show there exists a 1-parameter family of singular gradient steady Ricci solitons;
they are all singular at $x=0$ with the leading order asymptotics
\begin{align*}
\psi(x)\sim x^{\frac{1}{\sqrt{n}}}&&\omega(x)\sim\frac{\sqrt{n}-1}{x},&&n>1
\end{align*}
and they are complete towards $x=+\infty$, with the same asymptotic profile as the Bryant soliton.

For completeness, we also consider the case of spherically symmetric
shrinking and expanding solitons; these correspond to trajectories of the $3\times3$ system (\ref{ode2}).
In this case we can again derive the existence of solutions of the above type, but only for
$x\in (0,\delta)$, for some small $\delta>0$;
the reason is that the behavior of the trajectories for large parameter $y$ is not well understood.
While the above solitons have been constructed over the manifolds $\mathbb{R}\times\mathbb{S}^n$,
it would perhaps be natural to seek similar examples in the more general cohomogeneity-1 category,
studied by Dancer and Wang, \cite{DW1, DW2, DW3}.

In Section \ref{pert} we introduce the perturbation problem
we will be studying in the rest of the paper. We consider spherically symmetric initial metrics of the form
\begin{align*}
\tilde{g}=\tilde{\chi}^2(x)dx^2+\tilde{\psi}^2(x)g_{\mathbb{S}^n}
\end{align*}
For such initial data, the Ricci flow equation can be written (after a change of variables)
in the equivalent form (\ref{RFtildeg_t}) of a PDE coupled to an ODE.
The evolving Ricci soliton metric (defined through a radial pullback) remains spherically symmetric and
is represented by coordinate components $\chi(x,t),\psi(x,t)$,
while the stipulated Ricci flow that we wish to solve for with the above type of (perturbed) initial data
corresponds to two functions $\tilde{\chi}(x,t), \tilde{\psi}(x,t)$.
Since the singular nature of the initial data do not allow the system (\ref{RFtildeg_t}) to be attacked directly,
we introduce new variables which measure the closeness of $\tilde{\chi},\tilde{\psi}$ to $\chi,\psi$.

More precisely, we define
\begin{align*}
\zeta=\frac{\tilde{\chi}}{\chi}-1&&\xi=\frac{\tilde{\psi}}{\psi}-1.
\end{align*}
 Then the system reduces to (\ref{pdezixi}),
for which the Ricci soliton corresponds to the solution $\zeta=0, \xi=0$.
The coefficients of this system refer to the variable $\psi$ of the background evolving soliton, expressed with respect to
its arc-length parameter $s$.
What is critical here is that the coefficients are singular at $(x,t)=(0,0)$;
the precise nature of this singularity is essential in our further analysis.

 A first challenge appears at this point,
which in fact is independent of the singularities of the coefficients. Indeed,
it is related to the presence of the second order term $\xi_{ss}$ on the RHS of the first equation in (\ref{pdezixi}).
Since the first equation is only of first order in $\zeta$, this term would {\it not} make it possible to close the energy
estimates for our system. We therefore introduce a new variable defined by
\begin{align*}
\eta=\frac{(\zeta+1)^2}{(\xi+1)^{2n}}-1.
\end{align*}
 The new system (\ref{pde}) for $\eta$ and $\xi$
involves only first derivatives of $\xi$ in the evolution equation of $\eta$ and therefore can (in principle) be
approached via energy estimates. It is unclear at present whether
there is any geometric significance underlying this change of variables; it is in fact not a priori obvious
that such a simplification of the system should have been possible via a change of variables.
It is at this point that the spherical symmetry of both the background soliton and of the perturbations that we study is used in an essential way.

Thus, matters are reduced to proving well-posedness of  (\ref{pde}), in the appropriate spaces.
We follow the usual approach of performing an iteration\footnote{
In reality a contraction mapping argument, although we find it more convenient to
phrase our proof in terms of the standard Picard iteration.}, by solving a sequence of linear equations for the unknows
$(\eta^{m+1},\xi^{m+1})$ in terms of the known functions  $(\eta^m,\xi^m)$ solved for in the previous step,
and proving that the sequence $(\eta^m,\xi^m), m\in\mathbb{N}$ converges
to a solution $(\eta,\xi)$ of our original system.

\par We note that the usual approach would be to replace only the highest order terms
in the RHSs of (\ref{pde}) by the unknown function $\xi^{m+1}$ and replace all the lower-order ones by the previously-solved-for $\eta^m,\xi^m$.
However in the case at hand  this approach would fail for any function space,  due
to the nature of the singularities in the coefficients. For example, as we will see the coefficient $\frac{\psi_s^2}{\psi^2}$
in the potential terms contains a factor of $\frac{1}{s^2}$, where $s(x,t)$ is the arc-length parameter of the
background evolving soliton. It turns out that the leading order
in the asymptotic expansion of $s^2$ near $x=0,t=0$ is of the form
\begin{align*}
s^2\sim x^2+(\sqrt{n}-1)t.
\end{align*}
Consequently, the best $L^\infty_x$ bound
for $\frac{1}{s^2}$ would be $\frac{1}{s^2}\le \frac{C}{t}$, which evidently fails to be integrable in time;
this would result in an energy estimate of the form $\partial_t{\mathcal E}\le \frac{\mathcal E}{t}$ which cannot close.
The remedy for this problem is to modify the iteration procedure according to (\ref{itpde}).
In this linear iteration the unknown functions $\xi^{m+1}, \eta^{m+1}$
at the $(m+1)$-step also appear in certain lower-order terms associated to the most
singular coefficients.

Finally, we solve the system (\ref{itpde}) and prove that it defines a contraction mapping
 in certain
(time-dependent) weighted Sobolev spaces
$H^1_\alpha(s)$ containing all functions
\begin{align*}
u\in H^1(\mathbb{R}_+)&&\int\frac{u^2}{(s^2+\sigma t)^\alpha}+\frac{u^2_s}{(s^2+\sigma t)^{\alpha-1}}ds<+\infty,
\end{align*}
where we note that the weights depend on both the spatial and time variables $x,t$.
We note here that the integration is over the entire half-line
$\mathbb{R}_+$,\footnote{Recall that for the purposes of the introduction we are only considering
the stability of the half-complete (steady) Ricci solitons.} but we use the length element $ds$
which corresponds to the arc-length parameter of the background evolving Ricci soliton.
In particular $s(x,t):=\rho_t(x)$; thus for all $t>0$ $s(x,t)>0, \forall x\ge 0$.

The rather involved estimates  in Section \ref{genprob} aim precisely to show that the parameters $\alpha$ and
$\sigma>0$ {\it can be}  chosen in a way to make the estimates close.
The Picard iteration we follow is presented in
\S\ref{contrit}, where we also assert that these linear systems are well-posed in the appropriate
energy spaces, and can be solved up to a uniform time $T>0$, with a uniform bound on the energies.
Assuming that the well-posedness of the linear systems
holds, the main claim is that these iterates form a Cauchy sequence. This is done in the subsequent subsections in
\S\ref{genprob}, by  first deriving bounds for the  iterates in the weighted $L^2$ spaces, and then in the appropriately
weighted $H^1$ spaces.

In Section \ref{modprob} we then prove the well-posedness of the linear systems (\ref{itpde});
in fact we prove this for a more general type of linear equations.
The approach here is to iteratively solve for a sequence of functions which solve a coupled PDE-ODE system
(a heat equation in $\xi$ and an ODE in $\eta$).
The ODE can be solved separately and the energy estimates follow readily.
The heat equation in $\xi$  is then solved by a Galerkin-type argument, proving that it admits  an a priori
weak solution (which a posteriori is a strong one)
to which the energy estimates can be applied. This part is included for the sake of completeness, since
coupled systems of this singular nature do not appear to have been
 treated in the literature.

\subsection*{Acknowledgements}
\noindent

We wish to thank McKenzie Wang for helpful conversations on cohomogeneity-1 Ricci solitons.
The first author was partially supported by NSERC grants 488916 and 489103, and a Sloan fellowship. 
The second author was partially supported by an NSERC postdoctoral fellowship.

\section{Singular spherically symmetric Ricci Solitons}\label{coh1RS}
\noindent

We will be considering metrics  over
$M^{n+1}=(0,B)\times\mathbb{S}^n$ (where $B\in\mathbb{R}_{+}$ or $B=+\infty$),
in the form
\begin{align}\label{metric}
g=dx^2+\psi^2(x)g_{\mathbb{S}^n},
\end{align}
where $\psi$ is a positive smooth function and $g_{\mathbb{S}^n}$ denotes the canonical
metric on the unit sphere.

Our first aim for this section is to obtain such metrics
which satisfy the Ricci soliton equation and which are singular as $x\to 0^+$.
In particular we wish to construct a soliton metric which will extend continuously to $x=0$ with $\psi(x)\rightarrow0$,
as $x\rightarrow0^+$,
but will not close smoothly there (see
\cite[Lemma A.2]{RFTA}). (In the case $B<+\infty$, we will consider $\psi(x)$
having a smooth limit at $x=B$).

Recall the gradient Ricci soliton equation
\begin{align}\label{gRSeq}
Ric(g)+\nabla\nabla\phi+\lambda g=0&&\lambda\in\mathbb{R},
\end{align}
for a smooth potential function $\phi:M\to\R$, depending only on $x$, $\phi(x,p)=\phi(x)$.
Generally, for metrics of the form (\ref{metric}), we can easily
check that the Ricci tensor is given by
\begin{align}\label{Ric}
Ric(g)=-n\frac{\ddot{\psi}}{\psi}dx^2+(n-1-\psi\ddot{\psi}-(n-1)(\dot{\psi})^2)g_{\mathbb{S}^n}
\end{align}
and the Hessian of a radial function $\phi$ by
\begin{align}\label{Hess}
\nabla\nabla\phi=\ddot{\phi}dx^2+\psi\dot{\psi}\dot{\phi}g_{\mathbb{S}^n},
\end{align}
where $\dot{}=\frac{d}{dx}$. Therefore, setting $\omega=\dot{\phi}$,
equation (\ref{gRSeq}) reduces to a
coupled ODE system of the form
\begin{eqnarray}\label{ode}
\left\{\begin{array}{l}
n\ddot{\psi}-\psi\dot{\omega}=\lambda\psi\\
\psi\ddot{\psi}+(n-1)\dot{\psi}^2-(n-1)-\psi\dot{\psi}\omega=\lambda\psi^2.
\end{array}\right.
\end{eqnarray}
Following \cite[Chapter 1, \S5.2]{RFTA}, we introduce the transformation
\begin{align}\label{trans}
W=\frac{1}{-\omega+n\frac{\dot{\psi}}{\psi}},&&X=\sqrt{n}W\frac{\dot{\psi}}{\psi},
&&Y=\frac{\sqrt{n(n-1)}W}{\psi},
\end{align}
along with a new independent variable $y$ defined by
\begin{align}\label{dy}
dy=\frac{dx}{W}.
\end{align}
For the above set of variables, our ODE system (\ref{ode}) becomes
\begin{align}\label{ode2}
\bigg(\;'=\frac{d}{dy}\bigg)
\left\{\begin{array}{l}
W'=W(X^2-\lambda W^2)\\
X'=X^3-X+\frac{Y^2}{\sqrt{n}}+\lambda(\sqrt{n}-X)W^2\\
Y'=Y(X^2-\frac{X}{\sqrt{n}}-\lambda W^2)
\end{array}\right.
\end{align}
We readily check that for the steady ($\lambda=0$) and the shrinking ($\lambda<0$) cases,
the equilibrium points of the above system are
\begin{align*}
(0,0,0)&&(0,\pm1,0)&&(0,\frac{1}{\sqrt{n}},\pm\sqrt{1-\frac{1}{n}}).
\end{align*}
In the expanding ($\lambda>0$) case, we have two additional critical points
$(\pm\frac{1}{\sqrt{\lambda n}},\frac{1}{\sqrt{n}},0)$.
\begin{remark}\label{Brsol}
In the steady case, $\lambda=0$, the system (\ref{ode2}) is in fact represented by the last
two equations ($W$ is a function of $X,Y$). The well-known Bryant soliton
corresponds to the unique solution of the system that emanates from
the equilibrium point,  $(\frac{1}{\sqrt{n}},\sqrt{1-\frac{1}{n}})$
and converges to the equilibrium $(X=0,Y=0)$
\cite[Chapter 1, \S4]{RFTA}.
\end{remark}
In the present article we are concerned with the trajectories emanating from the equilibrium point $(0,1,0)$,
for all $\lambda\in\mathbb{R}$ (in our primary analysis).

The linearization of (\ref{ode2}) at $(0,1,0)$ takes the diagonal form
\begin{align}\label{linode2}
\left(\begin{array}{c}
W\\
X-1\\
Y
\end{array}\right)'=
\left(\begin{array}{ccc}
1&0&0\\
0&2&0\\
0&0&1-\frac{1}{\sqrt{n}}
\end{array}\right)
\left(\begin{array}{c}
W\\
X-1\\
Y
\end{array}\right)
\end{align}
Note that for $n>1$, all eigenvalues (diagonal entries) are positive,
which implies that $(0,1,0)$ is a source of the system. Whence,
if a trajectory of (\ref{ode2}) is initially $(y=0)$
close to $(0,1,0)$, i.e.,
\begin{align*}
|(W(0),X(0)-1,Y(0))|<\varepsilon,
\end{align*}
for $\varepsilon>0$ sufficiently small (indicated by the RHS of (\ref{ode2})),
then standard ODE theory (e.g., see \cite{CL}) yields the estimate
\begin{align}\label{asym}
|(W(y),X(y)-1,Y(y))|\leq\sqrt{3}\varepsilon e^{\mu y},&&y\leq0,
\end{align}
for some $0<\mu<1-\frac{1}{\sqrt{n}}$ (least eigenvalue). The latter estimate improves
as the initial conditions approach the equilibrium point $(0,1,0)$; in other words
one can pick $\mu$ closer to the eigenvalue $1-\frac{1}{\sqrt{n}}$ by taking
$\varepsilon$ sufficiently small.
We will show that these trajectories correspond to a singularity of the original metric (\ref{metric}) at $x=0$.

\subsection{Asymptotic analysis at the singularity}\label{asyman}
\noindent

We will be considering solutions of the system (\ref{ode2}),
with $(W(0), X(0), Y(0))$  sufficiently close to the equilibrium point $(0,1,0)$ and
with $Y(0), W(0)>0$.
(The reflection-symmetric trajectories over $\{ Y=0\} $ and $\{W=0\}$ are easily seen to correspond to
the same metric, while the trajectories with $Y(0)=0$ do not to correspond to Riemannian metrics).

We proceed to derive the asymptotic behavior, as $y\rightarrow-\infty$, of the variables
$W,X,Y$. Changing
back to $x$, using (\ref{dy}), we determine the desired asymptotic behavior of
the unknown functions in the original system (\ref{ode}), as $x\rightarrow0^+$. The final estimates will
confirm that $x=0$ is actually a singular point of the metric $g$.

Let $X(y)=1+g(y)$. Plugging into the equation of $W'$ in (\ref{ode2}) we obtain
\begin{align*}
W(y)=W(0)\exp\big\{y+\int^y_0W(z)g(z)(2+g(z))dz-\lambda\int^y_0W^3(z)dz\big\},
\end{align*}
where according to (\ref{asym}), for $y\leq0$,
\begin{align*}
|\int^y_0W(z)g(z)(2+g(z))dz|\leq3\varepsilon^2(2+\sqrt{3}\varepsilon)\frac{1-e^{2\mu y}}{2\mu}
\end{align*}
and
\begin{align*}
|-\lambda\int^y_0W^3(z)dz|\leq|\lambda|\cdot3\sqrt{3}\varepsilon^3\frac{1-e^{3\mu y}}{3\mu}.
\end{align*}
Thus,
\begin{align*}
W(y)=&C_1e^y+W(0)e^y\bigg[\exp\big\{\int^y_0W(z)g(z)(2+g(z))dz-\lambda\int^y_0W^3(z)dz\big\}\\
&-\exp\big\{-\int^0_{-\infty}W(z)g(z)(2+g(z))dz+\lambda\int^0_{-\infty}W^3(z)dz\big\}\bigg],
\end{align*}
where $C_1=W(0)\exp\big\{-\int^0_{-\infty}W(z)g(z)(2+g(z))dz+\lambda\int^0_{-\infty}W^3(z)dz\big\}>0$.
Using (\ref{asym}) again, we readily estimate the second term as above and obtain
\begin{align*}
W(y)=C_1e^y+O(e^{(2\mu+1)y})&&y\leq0.
\end{align*}
Similarly, from the equation of $Y'$ (\ref{ode2}) we derive
\begin{align*}
Y(y)=C_2e^{(1-\frac{1}{\sqrt{n}})y}+O(e^{(2\mu+1-\frac{1}{\sqrt{n}})y})&&y\leq0.
\end{align*}
for an appropriate positive ($Y(0)>0$) constant $C_2$. As for $X$, we already have
directly from (\ref{asym}) the bound
\begin{align*}
X=1+g(y)=1+O(e^{\mu y})&&y\leq0,
\end{align*}
which we can retrieve from the equation of $X'$ by integrating on $(-\infty,y)$
and using (\ref{asym}), along with the previously derived estimates for $W,Y$.
Now we wish to derive asymptotics for the variables in (\ref{ode}).
According to the transformation (\ref{trans}), for $y\leq-M$ ($M>0$ large), we have
\begin{align*}
\psi=&\frac{\sqrt{n(n-1)}W}{Y}=
\frac{\sqrt{n(n-1)}(C_1e^y+O(e^{(2\mu+1)y}))}{C_2e^{(1-\frac{1}{\sqrt{n}})y}+O(e^{(2\mu+1-\frac{1}{\sqrt{n}})y})}
=\frac{\sqrt{n(n-1)}C_1}{C_2}e^{\frac{1}{\sqrt{n}}y}+O(e^{(2\mu+\frac{1}{\sqrt{n}})y})\\
\frac{\dot{\psi}}{\psi}=&\frac{X}{\sqrt{n}W}=
\frac{1+O(e^{\mu y})}{\sqrt{n}(C_1e^y+O(e^{(2\mu+1)y}))}=
\frac{1}{\sqrt{n}C_1}e^{-y}+O(e^{(\mu-1)y})\\
\omega=&n\frac{\dot{\psi}}{\psi}-\frac{1}{W}=
\frac{\sqrt{n}}{C_1}e^{-y}+nO(e^{(\mu-1)y})-\frac{1}{C_1e^y+O(e^{(2\mu+1)y})}=
\frac{\sqrt{n}-1}{C_1}e^{-y}+O(e^{(\mu-1)y}).
\end{align*}
Also, going back to the second equation of (\ref{ode}) and dividing both sides by
$\psi^2$ yields
\begin{align*}
\frac{\ddot{\psi}}{\psi}=&-(n-1)\frac{\dot{\psi}^2}{\psi^2}+\frac{n-1}{\psi^2}
+\frac{\dot{\psi}}{\psi}\omega+\lambda\\
=&-(n-1)\big[\frac{1}{\sqrt{n}C_1}e^{-y}+O(e^{(\mu-1)y})\big]^2
+\frac{n-1}{\big[\frac{\sqrt{n(n-1)}C_1}{C_2}e^{\frac{1}{\sqrt{n}}y}+O(e^{(2\mu+\frac{1}{\sqrt{n}})y})\big]^2}\\
&+\big[\frac{1}{\sqrt{n}C_1}e^{-y}+O(e^{(\mu-1)y})\big]
\big[\frac{\sqrt{n}-1}{C_1}e^{-y}+O(e^{(\mu-1)y})
\big]+\lambda\\
=&-\frac{\sqrt{n}-1}{n}\frac{e^{-2y}}{C^2_1}+O(e^{(\mu-2)y}).
\end{align*}
Furthermore, the first equation of (\ref{ode}) gives
\begin{align*}
\dot{\omega}=n\frac{\ddot{\psi}}{\psi}-\lambda=
-(\sqrt{n}-1)\frac{e^{-2y}}{C^2_1}+nO(e^{(\mu-2)y})+\lambda=
-(\sqrt{n}-1)\frac{e^{-2y}}{C^2_1}+O(e^{(\mu-2)y}).
\end{align*}
Having derived asymptotics, as $y\rightarrow-\infty$,
for all the unknown functions appearing in the problem, we would like
to derive  corresponding asymptotics in the independent variable $x$
that we started with. For that we recall (\ref{dy}) and normalize so that $x\to 0^+$ as $y\to -\infty$ to deduce
\begin{align*}
x=\int Wdy=\int C_1e^y+O(e^{(2\mu+1)y})dy=C_1e^y+O(e^{(2\mu+1)y})&&(y\leq0),
\end{align*}
 Hence, it follows
\begin{align*}
C_1e^y=x+O(x^{2\mu+1}),
\end{align*}
for $y\leq-M$, $M>0$ large. Going back to each of the above estimates, we conclude that (for $x\ll1$)
\begin{align}\label{xasym}
\notag W=x+O(x^{2\mu+1}),&\qquad X=1+O(x^\mu),\qquad Y=\frac{\sqrt{n(n-1)}}{a}x^{1-\frac{1}{\sqrt{n}}}
+O(x^{2\mu+1-\frac{1}{\sqrt{n}}}),\\
\psi=&ax^{\frac{1}{\sqrt{n}}}+O(x^{\frac{2\mu+1}{\sqrt{n}}}),\qquad
\frac{\dot{\psi}}{\psi}=\frac{1}{\sqrt{n}}\frac{1}{x}+O(x^{\mu-1}),\\
\omega=\frac{\sqrt{n}-1}{x}+O(x^{\mu-1}),&\qquad
\frac{\ddot{\psi}}{\psi}=-\frac{\sqrt{n}-1}{n}\frac{1}{x^2}+O(x^{\mu-2}),\qquad
\dot{\omega}=-\frac{\sqrt{n}-1}{x^2}+O(x^{\mu-2}),\notag
\end{align}
where $a=\frac{\sqrt{n(n-1)}C^{1-\frac{1}{\sqrt{n}}}_1}{C_2}>0$.

Our analysis verifies in particular that the above trajectories $(W,X,Y)$, emerging
from the critical point $(0,1,0)$, correspond to metrics which do not close up smoothly
as $x\rightarrow0^+$. We have proved the following proposition.
\begin{proposition}[General existence of singular solitons]\label{incgRS}
For all initial conditions $(W(0), X(0), Y(0))$, $Y(0)>0, W(0)>0$ sufficiently close to the equilibrium point $(0,1,0)$,
the unique solution of (\ref{ode2}) exists for all values $y\in(-\infty,0]$,
and corresponds to a Ricci soliton metric in the form (\ref{metric}). Along with the associated potential
function they solve the gradient Ricci soliton equation (\ref{gRSeq}) \big[reduced to (\ref{ode}) in this context\big]
and have the asymptotic profile
\begin{align*}
\psi(x)\sim ax^{\frac{1}{\sqrt{n}}},\;a>0&&\omega(x)\sim\frac{\sqrt{n}-1}{x}&&\text{as $x\rightarrow0^+$.}
\end{align*}
These metrics are a priori defined for $x\in (0,\delta)$,
for some $\delta>0$ small, such that $\psi,\omega$ have a smooth limit, as $x\rightarrow\delta^-$.
\end{proposition}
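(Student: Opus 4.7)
The plan is to combine the linearization analysis at the source $(0,1,0)$ with the asymptotic computations already laid out in \S\ref{asyman}. Since the linearization (\ref{linode2}) is diagonal with all positive eigenvalues (for $n>1$), the equilibrium $(0,1,0)$ is a source for the nonlinear system (\ref{ode2}). Standard ODE theory (for instance, Coddington-Levinson, or the unstable manifold theorem) then guarantees that any initial condition sufficiently close to the equilibrium, with $W(0),Y(0)>0$, produces a unique backward trajectory which remains in a small neighborhood of $(0,1,0)$, exists for every $y\in(-\infty,0]$, and satisfies the exponential decay bound (\ref{asym}). Positivity $W(y),Y(y)>0$ is preserved along the flow because the factors $W'/W$ and $Y'/Y$ are smooth polynomial expressions in $(W,X,Y)$, so neither variable can vanish inside the source neighborhood.

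Granted such a trajectory $(W,X,Y)$ on $(-\infty,0]$, the inverse of the transformation (\ref{trans}) recovers $\psi=\sqrt{n(n-1)}\,W/Y$, and $\omega=n\dot\psi/\psi-1/W$, while the change of variable $x=\int_{-\infty}^y W(z)\,dz$ (normalized so that $x\to 0^+$ as $y\to-\infty$) yields the spatial coordinate of the metric (\ref{metric}). Inserting the leading orders $W\sim C_1 e^y$, $X\to1$, $Y\sim C_2 e^{(1-1/\sqrt{n})y}$ into these formulas gives $x\sim C_1 e^y$ and hence the announced asymptotics $\psi\sim ax^{1/\sqrt{n}}$, $\omega\sim(\sqrt{n}-1)/x$ as $x\to0^+$, as assembled in (\ref{xasym}). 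By construction, any solution of (\ref{ode2}) corresponds via (\ref{trans})-(\ref{dy}) to a solution of (\ref{ode}), so the resulting $(\psi,\phi)$ with $\phi(x)=\int\omega\,dx$ solves the gradient Ricci soliton equation (\ref{gRSeq}). To extend this to the claim that $\psi,\omega$ persist smoothly on some interval $(0,\delta)$, I would switch back to the original system (\ref{ode}) in the variable $x$: once a smooth, strictly positive $\psi$ is produced on $(0,x_0)$ for some small $x_0>0$, equation (\ref{ode}) is a regular (nondegenerate) ODE system away from $x=0$, and standard local existence extends $(\psi,\omega)$ to a maximal interval $(0,\delta)$ on which both remain smooth, with smooth limits at $x=\delta^-$.

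The main obstacle lies in justifying that the leading-order constants $C_1,C_2>0$ in the expansions of $W$ and $Y$ are well-defined and strictly positive; this is what converts the qualitative decay (\ref{asym}) into the sharp asymptotic profile demanded by the proposition. It reduces to proving absolute convergence of the improper integrals $\int_{-\infty}^0 W(z)g(z)(2+g(z))\,dz$ and $\int_{-\infty}^0 W(z)^3\,dz$ (and their analogues for $Y$), which follows directly from (\ref{asym}) provided $\mu$ is chosen close enough to the least eigenvalue $1-1/\sqrt{n}$. Everything else is bookkeeping: chaining the asymptotic expansions for $W,X,Y$ through (\ref{trans}) to obtain those for $\psi,\dot\psi/\psi,\omega,\ddot\psi/\psi,\dot\omega$, and then converting from the $y$-variable to the $x$-variable via $x\sim C_1 e^y$.
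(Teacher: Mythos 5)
Your proposal follows essentially the same route as the paper: use the linearization (\ref{linode2}) to identify $(0,1,0)$ as a source, invoke standard ODE theory for existence and exponential decay (\ref{asym}) of backward trajectories, integrate the $W'$ and $Y'$ equations to obtain the leading constants $C_1,C_2>0$ (via convergence of the improper integrals), then chain the asymptotics through (\ref{trans})-(\ref{dy}) back to the $x$-variable. One small imprecision: the convergence of $\int_{-\infty}^0 W(z)g(z)(2+g(z))\,dz$ and $\int_{-\infty}^0 W^3(z)\,dz$ already holds for \emph{any} $\mu>0$ in (\ref{asym}) since the integrands are $O(e^{2\mu z})$ and $O(e^{3\mu z})$; taking $\mu$ close to $1-\tfrac{1}{\sqrt n}$ only sharpens the error exponents in the expansions, not the existence of $C_1,C_2$.
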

\begin{remark}\label{singsol}
In view of the above asymptotic behaviour of $\psi$, as $x\rightarrow0^+$, we conclude that
the gradient Ricci solitons $(M^{n+1},g,\nabla\phi,\lambda)$, $n>1$, $\lambda\in\mathbb{R}$,
that we have constructed can be extended to $x=0$ as ${\mathcal C}^0$ metrics but are
singular in $C^1$ norm.
\end{remark}
\begin{remark}\label{R}
Now we are able to find the limit of the scalar curvature of $g$, as $x\rightarrow0^+$, by
computing its leading term. Contracting the Ricci tensor of $g$ (\ref{Ric}), we obtain
\begin{align*}
R(g)=&-n\frac{\ddot{\psi}}{\psi}+\frac{n}{\psi^2}(n-1-\psi\ddot{\psi}-(n-1)\dot{\psi}^2)\\
\tag{in the sense of (\ref{xasym})}
\sim&\;2(\sqrt{n}-1)\frac{1}{x^2}+\frac{n(n-1)}{a^2x^{\frac{2}{\sqrt{n}}}}
-(n-1)\frac{1}{x^2},&&\text{as $x\rightarrow0^+$}
\end{align*}
It follows that (for $n>1$)
\begin{align*}
\lim_{x\rightarrow0^+}R(g)=-\infty.&&\blacksquare
\end{align*}
\end{remark}
\begin{remark}\label{(0,-1,0)}
One could also consider the trajectories which emanate
from the other equilibrium $(0,-1,0)$ of (\ref{ode2})
(also a source);
these in fact correspond to solitons with profile
\begin{align*}
\psi(x)\sim x^{-\frac{1}{\sqrt{n}}}&&\omega(x)=-\frac{1+\sqrt{n}}{x},&&\text{as $x\rightarrow0^+$}.
\end{align*}
They are in fact defined for all dimensions $n+1\ge 2$, and in
the steady case $(\lambda=0)$, dimension $n+1=2$,
can be explicitly written out as:
\begin{align*}
\psi(x)=\frac{1}{x}&&\omega(x)=-\frac{2}{x},&&x\in(0,+\infty).
\end{align*}
Notice that these metrics are also singular at $x=0$, but their
evolution under the Ricci flow is almost the opposite
from the metrics we obtain near the equilibrium at $(0,1,0)$;
in particular they remain singular for all time.
These solitons  are however beyond the scope of this paper.
\end{remark}

\subsection{The half-complete steady solitons}\label{steady}
\noindent

In the steady case, $\lambda=0$, if we consider solutions of (\ref{ode2})
with initial point $(X(0), Y(0))$ satisfying $X^2(0)+Y^2(0)<1$, $Y(0)>0$
and lying close enough to $(1,0)$, the solutions
of (\ref{ode3}) exist for
all  $y\in(-\infty,+\infty)$. In fact, these trajectories emanating from $(1,0)$
translate back to Ricci soliton metrics of the form (\ref{metric}), which exist
(and are smooth) for all $x\in(0,+\infty)$.
We will call these Ricci solitons {\it half-complete}.
This is to distinguish them from the general Ricci solitons discussed in
the previous subsection, which are only
known to exist  for $x\in (0,\delta)$ for some small $\delta>0$.
We will focus on the half-complete solitons/trajectories in this subsection.

Fortunately, the known work in the corresponding context of smooth solitons
\cite[Chapter 1, \S4]{RFTA} can be directly adapted to our setting.
In particular,
the equation of $W'$ in (\ref{ode2}) becomes redundant
and hence (\ref{ode2}) reduces to
\begin{align}\label{ode3}
\left\{\begin{array}{l}
X'=X^3-X+\frac{Y^2}{\sqrt{n}}\\
Y'=Y(X^2-\frac{X}{\sqrt{n}})
\end{array}\right.
\end{align}
Therefore, in this framework we deal with trajectories in $\mathbb{R}^2$.
We remark that the Bryant soliton corresponds to the unique trajectory emanating from
the equilibrium point $(\frac{1}{\sqrt{n}},\sqrt{1-\frac{1}{n}})$ and converging (as $y\to+\infty$) to the origin $(0,0)$.
On the other hand,
the source considered in (\ref{linode2}) corresponds  to the point $(1,0)$.
An important tool in the analysis of these trajectories is the Lyapunov function
\begin{align}\label{Lyap}
L=X^2+Y^2,&&L'=X^2(L-1).
\end{align}
Starting ($y=0$) with $(X(0),Y(0))$ near $(1,0)$ inside the  disc $X^2+Y^2<1$
with $Y(0)>0$, we easily conclude that
the trajectory $(X(y), Y(y))$ approaches the origin $(0,0)$, as $y\rightarrow+\infty$ (at an exponential rate).
Whence, it is defined for all $y\in(-\infty,+\infty)$,
leading to singular gradient steady solitons $(M^{n+1},g,\nabla\phi)$,
as in Proposition \ref{incgRS}, with $M^{n+1}=(0,+\infty)\times\mathbb{S}^n$.
One can easily see that the set of all such trajectories fills up the domain in the unit disc bounded by the Bryant
soliton trajectory \big(which emanates from $(\frac{1}{\sqrt{n}},\sqrt{1-\frac{1}{n}})$\big)
and the positive $X$-axis lying inside the unit disc.
\begin{remark}\label{compatinfty}
Following \cite[Chapter 1, \S4]{RFTA}, we derive that the soliton metrics
corresponding to the $(X,Y)$-orbits above are complete towards
$x=+\infty$.
\end{remark}
In addition, we get \cite[\S1.4]{RFTA}
that the behaviour  of $\psi(x)$ and its derivatives, as $x\rightarrow+\infty$, is
the same with ones corresponding to the Bryant soliton.
\begin{align}\label{asyminfty}
cx^{\frac{1}{2}}\leq\psi\leq Cx^{\frac{1}{2}}&&
cx^{-\frac{1}{2}}\leq\dot{\psi}\leq Cx^{-\frac{1}{2}}
&&-Cx^{-\frac{3}{2}}\leq\ddot{\psi}\leq-cx^{-\frac{3}{2}},
\end{align}
for some positive constants $c,C$.
The above estimates suffice to derive bounds from (\ref{ode}), as $x\rightarrow+\infty$,
for all the other variables we will need below. Indeed, we readily estimate (for $x\ge M$ large)
\begin{align}\label{asyminfty2}
\frac{\ddot{\psi}}{\psi}=O(\frac{1}{x^2})&&\dot{\omega}=O(\frac{1}{x^2})&&
\frac{\dot{\psi}}{\psi}=O(\frac{1}{x})
&&\frac{1}{\psi^2}=O(\frac{1}{x})&&-C<\omega<-c
\end{align}
Combined with asymptotics (\ref{xasym}) of the trajectories
at the singular point $x=0$ we have the following proposition.
\begin{proposition}[Half-complete steady solitons]\label{steadysol}
There exists a $1$-parameter family of singular steady solitons $(M^{n+1},g,\nabla\phi)$ with
$M^{n+1}=(0,+\infty)\times\mathbb{S}^n$ and asymptotic profile at $x=0$ as in Proposition \ref{incgRS}.
These solitons correspond to the trajectories $(X,Y)$ of (\ref{ode3})
emanating from the equilibrium point $(1,0)$ and lying in the first (open) quadrant of the $XY$-plane inside
the unit disk $X^2+Y^2<1$. Further, the asymptotic behaviour of the solitons at $x=+\infty$ matches those of the
Bryant soliton.
\end{proposition}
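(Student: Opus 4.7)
Specializing Proposition \ref{incgRS} to $\lambda=0$ already yields local existence of the soliton near $x=0$ with the asymptotic profile $\psi\sim ax^{1/\sqrt{n}}$, $\omega\sim(\sqrt{n}-1)/x$. In this case the $W$-equation of (\ref{ode2}) decouples, leaving the planar system (\ref{ode3}) together with the scalar quadrature $W'=WX^2$. What remains to prove is: (i) the orbit $(X,Y)(y)$ emerging from the source $(1,0)$ into the open first quadrant of the unit disk is defined for all $y\in\mathbb{R}$ and converges to $(0,0)$ as $y\to+\infty$; (ii) the reconstructed spatial parameter $x(y)=\int W\, dy$ (normalized so that $x\to 0^+$ as $y\to-\infty$) sweeps out all of $(0,+\infty)$; and (iii) the metric enjoys the Bryant-type asymptotics (\ref{asyminfty})--(\ref{asyminfty2}) as $x\to+\infty$. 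The $1$-parameter family arises by varying the direction of departure from the $2$-dimensional unstable manifold at $(1,0)$ into the relevant quadrant, modulo the time-translation symmetry of the autonomous system.

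\textbf{Global existence and identification of the $\omega$-limit.} Backward existence ($y\le 0$) is already supplied by (\ref{linode2})--(\ref{asym}). For forward existence I would invoke the Lyapunov function $L=X^2+Y^2$ of (\ref{Lyap}): since $L(0)<1$, the identity $L'=X^2(L-1)$ forces $L$ to be non-increasing, trapping the orbit in the closed unit disk for all $y\ge 0$. Forward invariance of the open first quadrant is immediate: the axis $\{Y=0\}$ is invariant because $Y'$ contains $Y$ as a factor, and on $\{X=0,\ Y>0\}$ one has $X'=Y^2/\sqrt{n}>0$. Since $L$ is strictly decreasing off $\{X=0\}$, the $\omega$-limit set must consist of equilibria; the admissible candidates in the closed region are $(0,0)$, $(1,0)$, and $(1/\sqrt{n},\sqrt{1-1/n})$. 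The source $(1,0)$ is excluded because the orbit leaves its neighbourhood, and the Bryant trajectory of Remark \ref{Brsol} is itself a solution of (\ref{ode3}) and, by uniqueness, cannot be crossed by ours, so it acts as a separatrix confining our orbit to the open region bounded by itself and the positive $X$-axis. Consequently the $\omega$-limit is $(0,0)$.

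\textbf{Spatial parameter and asymptotics at $x=+\infty$.} From $W'=WX^2\ge 0$ and $W(0)>0$ one has $W(y)\ge W(0)$ for all $y\ge 0$, so $x(y)\to+\infty$ as $y\to+\infty$. Combined with the expansion $W\sim C_1e^y$ derived in Section \ref{asyman} for $y\le 0$, this makes $x(\cdot)$ a strictly increasing diffeomorphism from $\mathbb{R}$ onto $(0,+\infty)$. Finally, since our trajectory approaches $(0,0)$ along exactly the same regime as the Bryant orbit, the asymptotic analysis of \cite[Chapter 1, \S4]{RFTA} transfers verbatim and yields the bounds (\ref{asyminfty}) on $\psi,\dot\psi,\ddot\psi$; the remaining estimates (\ref{asyminfty2}) then follow by substitution into (\ref{ode}).

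\textbf{Main obstacle.} The only subtle step is (i): ruling out convergence to the Bryant equilibrium instead of the origin. This is where the separatrix property is essential---since two distinct trajectories of (\ref{ode3}) cannot cross, any orbit initiated beneath the Bryant arc inside the first quadrant of the unit disk remains beneath it, and within the closure of this trapping region the only equilibrium other than the (excluded) source $(1,0)$ and the points of the Bryant arc itself is the origin. The rest of the argument is a routine application of planar Lyapunov theory together with the change of variables (\ref{trans})--(\ref{dy}).
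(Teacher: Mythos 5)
Your proposal follows essentially the same route as the paper: invoke Proposition \ref{incgRS} near $x=0$, reduce to the planar system (\ref{ode3}), use the Lyapunov function $L=X^2+Y^2$ to show the orbit converges to the origin as $y\to+\infty$, and appeal to \cite[Ch.~1, \S 4]{RFTA} for the Bryant-type asymptotics towards $x=+\infty$. One small remark: the step you single out as the main obstacle---excluding the Bryant equilibrium $(\tfrac{1}{\sqrt n},\sqrt{1-\tfrac1n})$ as the $\omega$-limit---is handled more directly by the Lyapunov function itself, without any separatrix argument: since $L'=X^2(L-1)\le 0$ and $L(0)<1$, the forward orbit remains in $\{L\le L(0)\}$, strictly inside the unit disk, so every equilibrium on the unit circle is automatically excluded and LaSalle forces the $\omega$-limit to be the origin.
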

\begin{remark}\label{explsol}
It is worth noting that in dimension five, (i.e., $n=4$)
the soliton metrics and associated diffeomorphisms can in fact be written out explicitly:
\begin{align}\label{explprof}
\psi(x)=a\sqrt{x}&&\omega(x)=\frac{1}{x}-\frac{6}{a^2},&&a>0.
\end{align}
\end{remark}
\subsection{The evolving soliton metric $g(t)$ and the action of the diffeomorphisms.}\label{evmet}
\noindent

Since the metric $g$ (\ref{metric}) satisfies the gradient Ricci soliton equation (\ref{gRSeq}),
the evolving metric $g(t)$  evolves
under the Ricci flow
\begin{align*}
\partial_tg(t)=-2Ric\big(g(t)\big)&&g_0=g,
\end{align*}
only via diffeomorphisms. In particular,
\begin{align*}
g(t)=(1+2\lambda t)\cdot\rho_t^*(g)&&t\in[0,T],
\end{align*}
up to some time $T>0$ such that
\begin{align*}
\epsilon(t):=1+2\lambda t>0,
\end{align*}
where
\begin{align*}
\rho_t(x,p)=\rho_t(x)&&\rho_0=id_M
\end{align*}
is the flow generated by the (time dependent) vector field
\begin{align*}
\frac{1}{\epsilon(t)}grad_g\phi.
\end{align*}
Thus,
\begin{align}\label{g_t}
g(t)=\epsilon(t)\big[d(\rho_t(x))^2+\psi^2(\rho_t(x))g_{\mathbb{S}^n}\big]
\end{align}
We note that since
our manifold $(M^{n+1},g)$ is not complete (at $x=0$), $\rho_t(x)$ is not necessarily
defined for all $t$,
but nevertheless it exists locally in time, $t\in(-\varepsilon_x,\varepsilon_x), x>0$.
However, as we will
see below, by studying the leading behavior of $\rho_t(x)$, as $x\rightarrow0^+$,
for $x\in (0,\delta)$  the flow
exists for all future time $t\in  [0,T)$, for some small $T>0$.
In particular, for the half-complete steady Ricci solitons, which are our main focus here, the
flow exists for all $t\ge 0$.

Henceforth, we will suppress the sphere coordinates corresponding to different points $(x,p)$,
$(x,q)$ in $M^{n+1}$ so that we may consider $\rho_t$, for each time $t$, to be a real function in $x$
\begin{align*}
\rho_t:(0,\delta)\to(0,+\infty)\text{,}\text{ } \text{or} \text{ }\rho_t:(0,+\infty)\to (0,+\infty)
\end{align*}
Also, abusing notation we identify the time derivative of $\rho_t$ with its coefficient relative to
the coordinate field $\frac{\partial}{\partial x}$ component (the others
are in fact zero), that is,
\begin{align}\label{partial_trho_t}
\partial_t\rho_t(x)=\frac{1}{\epsilon(t)}(\nabla_{\partial/\partial x}\phi)_{\rho_t(x)}
=\frac{1}{\epsilon(t)}\dot{\phi}(\rho_t(x))=\frac{1}{\epsilon(t)}\omega(\rho_t(x)).
\end{align}
According to our asymptotic analysis (recall (\ref{xasym})),
\begin{align}\label{rhoasym}
\partial_t\rho_t(x)=\frac{1}{\epsilon(t)}\frac{\sqrt{n}-1}{\rho_t(x)}
+\frac{1}{\epsilon(t)}O(\rho_t(x)^{\mu-1})
\end{align}
for $x,t$ small enough, which also yields
\begin{align*}
\frac{1}{\epsilon(t)}\frac{\sqrt{n}-1}{2\rho_t(x)}\leq\partial_t\rho_t(x)\leq
\frac{2}{\epsilon(t)}\frac{\sqrt{n}-1}{\rho_t(x)}
\end{align*}
or
\begin{align*}
\frac{\sqrt{n}-1}{\epsilon(t)}\leq2\rho_t(x)\partial_t\rho_t(x)\leq
\frac{4(\sqrt{n}-1)}{\epsilon(t)}.
\end{align*}
Integrating on a small time interval we deduce
\begin{align*}
\rho_0^2(x)+\int^t_0\frac{\sqrt{n}-1}{1+2\lambda\tau}d\tau\leq\rho_t^2(x)\leq
\rho_0^2(x)+\int^t_0\frac{4(\sqrt{n}-1)}{1+2\lambda\tau}d\tau
\end{align*}
yielding
\begin{align}\label{push}
\left\{\begin{array}{l}
x^2+(\sqrt{n}-1)t,\;\;\lambda=0\\
x^2+\frac{\sqrt{n}-1}{2\lambda}\log(1+2\lambda t),\;\;\lambda\neq0
\end{array}\right.
\leq\rho^2_t(x)\leq\left\{\begin{array}{l}
x^2+4(\sqrt{n}-1)t\\
x^2+2\frac{\sqrt{n}-1}{\lambda}\log(1+2\lambda t)
\end{array}\right.
\end{align}
and in particular
\begin{align}\label{push2}
\rho^2_t(x)\geq\left\{\begin{array}{l}
(\sqrt{n}-1)t,\;\;\lambda=0\\
\frac{\sqrt{n}-1}{2\lambda}\log(1+2\lambda t),\;\;\lambda\neq0
\end{array}\right.
\end{align}
\begin{remark}\label{flow}
Notice that
for all $\lambda\in\mathbb{R}$, the lower bounds
are strictly positive (when $n>1$); provided $1+2\lambda t>0$. Thus,
\begin{align*}
\rho_t\big((0,\delta)\big)\subseteq(m(t),+\infty)&&m(t)>0,\;t>0
\end{align*}
and in particular $\rho_t$ is {\it not} surjective.
We further note that $m(t)$ is non-decreasing.
A geometric interpretation of the latter is that the flow $\rho_t$ ``pushes''
the domain away from the singularity at $x=0$.
\end{remark}
We return now to (\ref{partial_trho_t}) and focus on the singular steady solitons
constructed in \S\ref{steady}.
Besides the estimate (\ref{rhoasym}) which holds close to $(x,t)=(0,0)$,
by (\ref{asyminfty2}) it follows that
\begin{align}\label{partial_trho_tinfty}
-C\leq\partial_t\rho_t(x):=\omega(\rho_t(x))\leq -c,
\end{align}
for $c,C>0$ and $x>M$ large, $t$ small. Whence,
integrating on $[0,t]$ we arrive at
\begin{align}\label{rho_tinfty}
x-Ct\leq\rho_t(x)\leq x-ct,
\end{align}
for an appropriate constant $C>0$.
\newline

Further,  we can give a complete description of the evolution of the singular Ricci solitons
(Proposition \ref{steadysol}).
Indeed, in this case we derive that there is a critical slice $\{x_{\rm crit}\}\times\mathbb{S}^n$
of the manifold $M^{n+1}=(0,+\infty)\times\mathbb{S}^n$,
which is invariant under the flow $\rho_t(\cdot)$. In particular:
\begin{align}\label{omegasign}
\omega(x)>0,\;x\in(0,x_{\rm crit})&&\omega(x_{\rm crit})=0&&\omega(x)<0,\;(x_{\rm crit},+\infty)
\end{align}
Thus, for any  point $x\in(0,+\infty)$:
\begin{align*}
\lim_{t\rightarrow+\infty}\rho_t(x)=x_{\rm crit}
\end{align*}
and in particular regarding Remark \ref{flow}
\begin{align*}
\lim_{t\rightarrow+\infty}m(t)=x_{\rm crit}&&\lim_{t\rightarrow+\infty}\rho_t\big((0,+\infty)\big)=[x_{\rm crit},+\infty).
\end{align*}
Since the soliton in this case evolves via the pull-back of the flow $\rho_t^*$,
this implies that the limit of the Riemannian  manifolds $(M^{n+1},g(t))$, as $t\rightarrow+\infty$, is the manifold with
boundary $M^{n+1}_{\infty}=(0,+\infty)\times\mathbb{S}^n$ associated with the pseudo-metric
$0dx^2+\psi^2(x_{\rm crit})g_{\mathbb{S}^n}$.

In order to prove the above picture, it suffices to show that (\ref{omegasign}) is valid. Then the rest
follow from a monotonicity argument and the uniqueness of solutions to ODEs.
From the estimates (\ref{xasym}), (\ref{asyminfty2}) we confirm that
$\omega$ is positive close to $x=0$ and is negative near $+\infty$, hence there exists a point $x_{\rm crit}$
where $\omega(x_{\rm crit})=0$.
It remains to show that this is the only zero of $\omega$.
We recall at this point a general identity for
solutions to the gradient Ricci soliton equation (\ref{gRSeq})
(see for instance \cite[Proposition 1.15]{RFTA}).
\begin{proposition}\label{genid}
Let $(M^m,g,\nabla\phi)$ be a gradient Ricci soliton, i.e., a solution of the equation (\ref{gRSeq}).
Then the following quantities are constant:
\begin{align*}
(i)&\;\;R+\Delta_g\phi+m\lambda=0\qquad\qquad\text{(tracing)}\\
(ii)&\;\;R+|\nabla_g\phi|^2+2\lambda\phi=C_0,
\end{align*}
where $R$ is the scalar curvature of $(M^m,g)$.
\end{proposition}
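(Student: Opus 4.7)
The plan is to derive both identities as simple algebraic consequences of the gradient Ricci soliton equation $R_{ij}+\nabla_i\nabla_j\phi+\lambda g_{ij}=0$, using only trace and divergence operations together with the contracted second Bianchi identity.

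For part (i), I would simply contract the soliton equation with $g^{ij}$. Since $g^{ij}g_{ij}=m$ and $g^{ij}\nabla_i\nabla_j\phi=\Delta_g\phi$, this yields $R+\Delta_g\phi+m\lambda=0$ directly, with no further work required. Here the right-hand side is $0$, a stronger statement than mere constancy.

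For part (ii), the strategy is to take the divergence of the soliton equation and compare the resulting terms. Applying $g^{ij}\nabla_i$ (equivalently, $\nabla^i$) to both sides gives
\begin{align*}
\nabla^i R_{ij}+\nabla^i\nabla_i\nabla_j\phi=0,
\end{align*}
since $\nabla^i g_{ij}=0$. Two standard tools then reduce this to a total derivative. First, the contracted second Bianchi identity gives $\nabla^i R_{ij}=\tfrac{1}{2}\nabla_j R$. Second, the commutation formula for the rough Laplacian on a $1$-form yields
\begin{align*}
\nabla^i\nabla_i\nabla_j\phi=\nabla_j(\Delta_g\phi)+R_{jk}\nabla^k\phi.
\end{align*}
Using part (i) to substitute $\nabla_j(\Delta_g\phi)=-\nabla_j R$, and using the soliton equation itself (contracted with $\nabla^k\phi$) to rewrite
\begin{align*}
R_{jk}\nabla^k\phi=-\nabla_j\nabla_k\phi\cdot\nabla^k\phi-\lambda\nabla_j\phi=-\tfrac{1}{2}\nabla_j|\nabla\phi|^2-\lambda\nabla_j\phi,
\end{align*}
the divergence equation collapses to
\begin{align*}
\nabla_j\bigl(R+|\nabla_g\phi|^2+2\lambda\phi\bigr)=0,
\end{align*}
so the quantity in parentheses is locally (hence, on a connected $M$, globally) constant $C_0$.

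The only mildly delicate step is the commutator formula for $\nabla^i\nabla_i\nabla_j\phi$, where a sign error in the Ricci term would spoil the cancellation; everything else is a bookkeeping exercise. Assuming the standard convention $[\nabla_i,\nabla_j]\omega_k=-R_{ijk}{}^{l}\omega_l$, the Ricci contribution comes out with the sign needed to combine cleanly with the $\tfrac{1}{2}\nabla_j R$ term from Bianchi and the $-\nabla_j R$ term from substituting (i), leaving precisely $-\tfrac{1}{2}\nabla_j R$, which matches the $-\tfrac{1}{2}\nabla_j|\nabla\phi|^2-\lambda\nabla_j\phi$ to give a pure gradient. I would verify this sign convention against the conventions used earlier in the paper (e.g.\ in the formula \eqref{Ric} for $\mathrm{Ric}(g)$) before finalizing the computation.
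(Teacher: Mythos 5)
Your proof is correct, and it is the standard derivation of these two soliton identities; the paper itself supplies no proof, instead citing \cite[Proposition 1.15]{RFTA}, where the argument is essentially the one you give (trace the equation for (i); take its divergence, use the contracted second Bianchi identity and the Bochner commutation $\Delta\nabla_j\phi=\nabla_j\Delta\phi+R_{jk}\nabla^k\phi$, and substitute back to obtain a total gradient for (ii)). Your sign bookkeeping checks out: with the paper's convention (for which $\mathrm{Ric}$ of the round sphere is positive, as one can read off from \eqref{Ric}), the Bochner term has the sign you used, and $\tfrac12\nabla_jR-\nabla_jR-\tfrac12\nabla_j|\nabla\phi|^2-\lambda\nabla_j\phi=0$ indeed gives $\nabla_j\bigl(R+|\nabla_g\phi|^2+2\lambda\phi\bigr)=0$. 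The only point worth making explicit is that constancy of $C_0$ requires connectedness of $M$, which you note; for the solitons in this paper $M^{n+1}=(0,B)\times\mathbb{S}^n$ is connected, so this is harmless.
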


Subtracting the two identities of the preceding proposition we obtain
\begin{align*}
\Delta_g\phi-|\nabla_g\phi|^2-2\lambda\phi+m\lambda=-C_0
\end{align*}
Whence, in our context where $\lambda=0$, the previous equation amounts to
\begin{align}\label{phieqn}
\ddot{\phi}+\frac{\dot{\psi}}{\psi}\dot{\phi}-\dot{\phi}^2=-C_0,
\end{align}
{\it Claim}: $C_0>0$. According to the asymptotics of $\omega:=\dot{\phi}$ (\ref{xasym}), (\ref{asyminfty2}),
we easily deduce that $\phi$ tends to $-\infty$ at both ends of the manifold $x\rightarrow 0^+,x\rightarrow +\infty$.
This implies that $\phi$ has a global maximum $M$, realized at some point $\tilde{x}$.
By (\ref{phieqn}) we get $C_0\ge0$. However, the constant $C_0$ cannot be zero, otherwise
we would have $\phi\equiv M$ (by uniqueness of ODEs), which of course is not possible.
Our claim follows.

Thus, every critical point of $\phi$ is a {\it strict} local maximum. Therefore, $\phi$ can only
have one critical point,  $x_{\rm crit}=\tilde{x}$.$\qquad\blacksquare$

\section{The Stability problem}\label{pert}
\noindent

Our main goal in this paper is to prove (local in time) well-posedness of the Ricci
flow for spherically symmetric metrics which are
close enough  (in certain spaces we construct in \S\ref{wSsp})
to the soliton metrics (Propositions \ref{incgRS} and \ref{steadysol}) we constructed in the previous section.
We recall below a useful form of the Ricci flow equation for
spherically symmetric metrics and then proceed to introduce a transformation of our system into new variables
$\zeta,\xi$ (\ref{zixi}).
These are designed to capture the closeness of the (putative)
evolving solution under the Ricci flow to the evolution of the
background Ricci soliton.
The resulting system (after a further change of variables, replacing $\zeta$ with a new variable $\eta$)
involves a second order parabolic equation in $\xi$ coupled with a 1st order equation in $\eta$,
both of them having certain singular coefficients.

The singular coefficients are determined fully by the background evolving soliton metric.
The precise asymptotics of these coefficients are essential to our further pursuits, so we begin
by studying those in the following subsection \ref{rotRF}. Next, in \S\ref{wSsp} we set up the function spaces
in which we will be proving our well-posedness result. In
\S\ref{contrit} we set up the iterative procedure by which we will show
the existence and uniqueness of solutions to the system (\ref{pde}), which
implies the desired existence and uniqueness of solutions to the original system (\ref{RFtildeg_t}) and
ultimately (\ref{RFtilde}).

One final convention: We will be considering perturbations of both the half-complete solitons
constructed in \S\ref{steady} and the general solitons in \S\ref{asyman} and proving the well-posedness of the Ricci-flow
in both those settings.\footnote{Recall that the half-complete solitons are steady, i.e. $\lambda=0$.}
Whenever a distinction needs to be made below, the case of  perturbations of the
half-complete solitons will be denoted by {\bf HC}, while case of the perturbations of the general solitons by {\bf G}.

\subsection{The Ricci flow for the perturbed metric: A transformed system}\label{rotRF}
\noindent

Let $\tilde{g}$ be a spherically
symmetric metric on $M^{n+1}=(0,B)\times\mathbb{S}^n$ ($B=\delta>0$ or $B=+\infty$),
\begin{align}\label{tildeg}
\tilde{g}=\tilde{\chi}^2(x)dx^2+\tilde{\psi}^2(x)g_{\mathbb{S}^n},
\end{align}
where $\tilde{\chi},\tilde{\psi}$ are positive smooth functions.
Suppose $\tilde{g}(t)$ solves the Ricci flow
\begin{align}\label{RFtilde}
\partial_t\tilde{g}(t)=-2Ric\big(\tilde{g}(t)\big)&&t\in[0,T],\;\;\tilde{g}_0=\tilde{g}.
\end{align}
Also, assume that $\tilde{g}(t)$ is of the form
\begin{align}\label{tildeg_t}
\tilde{g}(t)=\tilde{\chi}^2(x,t)dx^2+\tilde{\psi}^2(x,t)g_{\mathbb{S}^n}.
\end{align}
We now let $\tilde{s}(x,t)$ be the radial arc-length parameter for the above metric at any given time $t$. In other words,
we define
\begin{align}\label{tildes}
d\tilde{s}=\tilde{\chi}(x,t)dx.
\end{align}
Now, expressing $\tilde{\psi}(\cdot, t)$ relative to the parameter $\tilde{s}$ (and slightly abusing notation),
$\tilde{g}(t)$ becomes
\begin{align*}
\tilde{g}(t)=d\tilde{s}^2+\tilde{\psi}^2(\tilde{s},t)g_{\mathbb{S}^n}.
\end{align*}
As in (\ref{Ric}), for each $t\ge 0$, the Ricci tensor of $\tilde{g}(t)$ is given by
\begin{align*}
Ric\big(\tilde{g}(t)\big)=-n\frac{\tilde{\psi}_{\tilde{s}\tilde{s}}}{\tilde{\psi}}d\tilde{s}^2
+(n-1-\tilde{\psi}\tilde{\psi}_{\tilde{s}\tilde{s}}-(n-1)\tilde{\psi}_{\tilde{s}}^2)g_{\mathbb{S}^n}.
\end{align*}
Thus, the Ricci flow (\ref{RFtilde}), under the above assumptions, reduces to
the following coupled PDE system:
\begin{align*}
\left\{\begin{array}{l}
2\tilde{\chi}\tilde{\chi}_t=-2(-n\frac{\tilde{\psi}_{\tilde{s}\tilde{s}}}{\tilde{\psi}})d\tilde{s}^2(\partial_x,\partial_x)
=2n\frac{\tilde{\psi}_{\tilde{s}\tilde{s}}}{\tilde{\psi}}\tilde{\chi}^2\\
2\tilde{\psi}\tilde{\psi}_t
=-2(n-1-\tilde{\psi}\tilde{\psi}_{\tilde{s}\tilde{s}}-(n-1)\tilde{\psi}_{\tilde{s}}^2)
\end{array}\right.
\end{align*}
or simply
\begin{align}\label{RFtildeg_t}
\left\{\begin{array}{l}
\tilde{\chi}_t=n\frac{\tilde{\psi}_{\tilde{s}\tilde{s}}}{\tilde{\psi}}\tilde{\chi}\\
\tilde{\psi}_t=\tilde{\psi}_{\tilde{s}\tilde{s}}-(n-1)\frac{1-\tilde{\psi}_{\tilde{s}}^2}{\tilde{\psi}}
\end{array}\right.&&
\tilde{\chi}(x,0)=\tilde{\chi}(x),\;\tilde{\psi}(x,0)=\tilde{\psi}(x),\;t\in[0,T].
\end{align}
(Note that the first equation involves the evolution of the radial distance function,
while the second involves the evolution of the radii of the spheres, at a given radial distance).
\newline

We return now to the metric
of the (singular) gradient Ricci soliton $(M^{n+1},g,\nabla\phi,\lambda)$ (Proposition \ref{incgRS})
and its corresponding evolving metric (\ref{g_t})
\begin{align*}
g(t)=\epsilon(t)\big[d(\rho_t(x))^2+\psi^2(\rho_t(x))g_{\mathbb{S}^n}\big],
\end{align*}
defined for all $t$ such that $\epsilon(t)=1+2\lambda t>0$.
Let
\begin{align}\label{s}
s(x,t)=\sqrt{\epsilon(t)}\,\rho_t(x),\;\;s(x,0)=x&&ds=\sqrt{\epsilon(t)}\,\partial_x\rho_t(x)dx,
\end{align}
transforming $g(t)$ into
\begin{align*}
g(t)=ds^2+\psi^2(s,t), g_{\mathbb{S}^n}=
\chi^2(x,t)dx^2+\psi^2(x,t)g_{\mathbb{S}^n},
\end{align*}
where
\begin{align}\label{chipsi}
\chi(x,t):=\sqrt{\epsilon(t)}\,\partial_x\rho_t(x)&&\psi(x,t):=\sqrt{\epsilon(t)}\,\psi(\rho_t(x)).
\end{align}
Note that $\psi(x,0)=\psi(x)$ is the original component of the metric $g$ (\ref{metric}).
Arguing similarly to the case of $\tilde{g}(t)$,
we find that the evolution of $g$ via
\begin{align*}
\partial_tg(t)=-2Ric\big(g(t)\big)&&g_0=g
\end{align*}
is equivalent to the coupled system
\begin{align}\label{RFg_t}
\left\{\begin{array}{l}
\chi_t=n\frac{\psi_{ss}}{\psi}\chi\\
\psi_t=\psi_{ss}-(n-1)\frac{1-\psi_s^2}{\psi}
\end{array}\right.&&\chi(x,0)=1,\;\psi(x,0)=\psi(x).
\end{align}

We now take a first step towards transforming our system of equations into new variables. Let
\begin{align}\label{zixi}
\zeta=\frac{\tilde{\chi}}{\chi}-1&&\xi=\frac{\tilde{\psi}}{\psi}-1.
\end{align}
The above formulas are defined for all $x\in (0,B), t\in [0,T]$.
In particular, these variables measure (in
a refined way) the difference between the unknown functions $\tilde{\chi}, \tilde{\psi}$ and
the background variables $\chi,\psi$. Note in addition that requiring  $\xi=0$ at the endpoint
$x=0,t=0$ forces $\tilde{\psi}$ to have the same leading order asymptotics at $x=0$ as the background
component $\psi$.

We next wish to convert (\ref{RFtildeg_t}) into a system of equations
for $\zeta,\xi$. We wish to obtain an evolution equation in the parameters $s$ and $t$ ($s$ being
the arc-length parameter of the background evolving Ricci soliton).
We are then forced to deal with the discrepancy between $\tilde{s},s$, which
are the arc-length parameters for the evolving metrics $g(t), \tilde{g}(t)$. We calculate:
\begin{align*}
&\partial_{\tilde{s}}\overset{(\ref{tildes})}{=}\frac{1}{\tilde{\chi}}\partial_x
=\frac{\chi}{\tilde{\chi}}\frac{1}{\chi}\partial_x
\overset{(\ref{s}),(\ref{chipsi})}{=}\frac{1}{\zeta+1}\partial_{s}\\
&\partial_{\tilde{s}}\partial_{\tilde{s}}=\frac{1}{\zeta+1}\partial_s(\frac{1}{\zeta+1}\partial_s)
=\frac{1}{(\zeta+1)^2}\partial_s\partial_s-\frac{\zeta_s}{(\zeta+1)^3}\partial_s,
\end{align*}
and hence we write
\begin{align*}
\tilde{\psi}_{\tilde{s}}=&\frac{1}{\zeta+1}\big(\psi(\xi+1)\big)_s\\
\tilde{\psi}_{\tilde{s}\tilde{s}}=&
\frac{1}{(\zeta+1)^2}\big(\psi(\xi+1)\big)_{ss}-\frac{\zeta_s}{(\zeta+1)^3}\big(\psi(\xi+1)\big)_s.
\end{align*}
Taking time derivatives in (\ref{zixi})
and combining (\ref{RFtildeg_t}), (\ref{RFg_t}),
we derive the following coupled system for $\zeta,\xi$.
\begin{align}\label{pdezixi}
\notag\zeta_t=&\;n\frac{\psi_{ss}}{\psi}\big[\frac{1}{\zeta+1}-(\zeta+1)\big]
+2n\frac{\psi_s}{\psi}\frac{\xi_s}{(\zeta+1)(\xi+1)}
+n\frac{\xi_{ss}}{(\zeta+1)(\xi+1)}
-n\frac{\psi_s}{\psi}\frac{\zeta_s}{(\zeta+1)^2}\\
&\notag-n\frac{\zeta_s\xi_s}{(\zeta+1)^2(\xi+1)}\\
\xi_t=&\;(\frac{\psi_{ss}}{\psi}+(n-1)\frac{\psi_s^2}{\psi^2})\big[\frac{\xi+1}{(\zeta+1)^2}-\xi-1\big]
+\frac{n-1}{\psi^2}(\xi+1-\frac{1}{\xi+1})\\
\notag&+2n\frac{\psi_s}{\psi}\frac{\xi_s}{(\zeta+1)^2}
+\frac{\xi_{ss}}{(\zeta+1)^2}
+(n-1)\frac{\xi_s^2}{(\zeta+1)^2(\xi+1)}
-\frac{\psi_s}{\psi}\frac{\zeta_s(\xi+1)}{(\zeta+1)^3}-\frac{\zeta_s\xi_s}{(\zeta+1)^3}
\end{align}
It is clear that solving for $\tilde{\chi},\tilde{\psi}$ in (\ref{RFtildeg_t})
amounts to solving for $\zeta,\xi$ in the preceding system. Now we would like to
simplify (\ref{pdezixi}) by removing the problematic term
$n\frac{\xi_{ss}}{(\zeta+1)(\xi+1)}$ from the first equation. (This term would
 not allow us to derive energy estimates of any kind for (\ref{pdezixi})).
In order to do so, we replace the variable $\zeta$ by
\begin{align}\label{eta}
\eta:=\frac{(\zeta+1)^2}{(\xi+1)^{2n}}-1.
\end{align}
The new system of $\eta,\xi$ reads
\begin{align}\label{pde}
\notag\eta_t=&\;
-2n(n-1)\bigg(\frac{\psi^2_s}{\psi^2}\big[\frac{1}{(\xi+1)^{2n}}-1\big]
+2\frac{\psi_s}{\psi}\frac{\xi_s}{(\xi+1)^{2n+1}}
+\frac{1-(\xi+1)^{-2}}{\psi^2}
+\frac{\xi^2_s}{(\xi+1)^{2n+2}}\bigg)\\
\notag&-2n(n-1)\frac{1-(\xi+1)^{-2}}{\psi^2}\eta
+2n(n-1)\frac{\psi_s^2}{\psi^2}\eta\\
\xi_t=&\;(\frac{\psi_{ss}}{\psi}
+(n-1)\frac{\psi^2_s}{\psi^2})\big[\frac{1}{(\eta+1)(\xi+1)^{2n-1}}-(\xi+1)\big]
+\frac{n-1}{\psi^2}(\xi+1-\frac{1}{\xi+1})\\
\notag&+n\frac{\psi_s}{\psi}\frac{\xi_s}{(\eta+1)(\xi+1)^{2n}}
+\frac{\xi_{ss}}{(\eta+1)(\xi+1)^{2n}}
-\frac{\xi^2_s}{(\eta+1)(\xi+1)^{2n+1}}\\
\notag&-\frac{1}{2}\frac{\psi_s}{\psi}\frac{\eta_s}{(\eta+1)^2(\xi+1)^{2n-1}}
-\frac{1}{2}\frac{\eta_s\xi_s}{(\eta+1)^2(\xi+1)^{2n}},
\end{align}
\begin{align*}
\eta\bigg|_{t=0}:=\eta_0&&\xi\bigg|_{t=0}:=\xi_0.
\end{align*}

Before attempting to solve the above system, we must first understand its
important features. It is crucial that we know the exact
leading asymptotics of the coefficients in (\ref{pde}), as $x,t\rightarrow0^+$.
Recalling that $s(x,t)=\sqrt{\epsilon(t)}\rho_t(x)$ we derive that
(for fixed $t$)
\begin{align}\label{psi(.,.)}
\notag\psi(s,t)\overset{(\ref{chipsi})}{=}\sqrt{\epsilon(t)}\,\psi(\rho_t(x))
\overset{(\ref{s})}{=}\sqrt{\epsilon(t)}\,\psi(\frac{s}{\sqrt{\epsilon(t)}})&&
\frac{\partial_s\psi(s,t)}{\psi(s,t)}
=\frac{1}{\sqrt{\epsilon(t)}}\frac{\dot{\psi}(\frac{s}{\sqrt{\epsilon(t)}})}{\psi(\frac{s}{\sqrt{\epsilon(t)}})}\\
\frac{\partial_s\partial_s\psi(s,t)}{\psi(s,t)}=
\frac{1}{\epsilon(t)}\frac{\ddot{\psi}(\frac{s}{\sqrt{\epsilon(t)}})}{\psi(\frac{s}{\sqrt{\epsilon(t)}})},
\end{align}
as long as $\epsilon(t)=1+2\lambda t>0$.
Here we recall again that $\psi(\cdot)$ is the function in (\ref{metric}), different from
$\psi(\cdot,\cdot)$ for $t>0$. Recall definition (\ref{s})
and the upper bound (\ref{push}) of $\rho^2_t(x)$.
Going back to (\ref{xasym}), for $x,t$ sufficiently small, we deduce
\begin{align}\label{coeff}
\frac{1}{\psi^2}=\frac{\epsilon(t)^{\frac{1}{\sqrt{n}}-1}}{a^2}\frac{1}{s^{\frac{2}{\sqrt{n}}}}
+O(s^{\frac{2\mu-2}{\sqrt{n}}})&&&
\frac{\psi_s}{\psi}=\frac{1}{\sqrt{n}}\frac{1}{s}+O(s^{\mu-1})\\
\notag\frac{\psi^2_s}{\psi^2}=\frac{1}{n}\frac{1}{s^2}+O(s^{\mu-2})
&&&\frac{\psi_{ss}}{\psi}+(n-1)\frac{\psi^2_s}{\psi^2}=
\frac{n-\sqrt{n}}{n}\frac{1}{s^2}+O(s^{\mu-2}).
\end{align}
Similar estimates hold near $(x,t)=(0,0)$ for the derivatives of the potential
\begin{align}\label{omega}
\omega(s,t)&:=\frac{1}{\sqrt{\epsilon(t)}}\omega(\rho_t(x))\overset{(\ref{s})}{=}
\frac{1}{\sqrt{\epsilon(t)}}\omega(\frac{s}{\sqrt{\epsilon(t)}})=\frac{\sqrt{n}-1}{s}+O(s^{\mu-1})\\
\notag\partial_s\omega(s,t)&\overset{(\ref{s})}{=}
\frac{1}{\sqrt{\epsilon(t)}\partial_x\rho_t(x)}\frac{\partial}{\partial x}
\big[\frac{1}{\sqrt{\epsilon(t)}}\omega(\rho_t(x))\big]=
\frac{1}{\epsilon(t)}\dot{\omega}(\frac{s}{\sqrt{\epsilon(t)}})=-\frac{\sqrt{n}-1}{s^2}+O(s^{\mu-2}).
\end{align}
Moreover, as we will see below, we are going
to need also estimates for the $s$-derivatives of the coefficients in the first
equation of (\ref{pde}), that is,
\begin{align}\label{coeff_s}
\notag\partial_s(\frac{1}{\psi^2})=&
-2\frac{1}{\psi^2}\frac{\psi_s}{\psi}\overset{(\ref{coeff})}{=}
-2\frac{\epsilon(t)^{1-\frac{1}{\sqrt{n}}}}{a^2\sqrt{n}}
\frac{1}{s^{1+\frac{2}{\sqrt{n}}}}+O(s^{b-1-\frac{2}{\sqrt{n}}})\\
\partial_s(\frac{\psi_s}{\psi})=&\frac{\psi_{ss}}{\psi}-\frac{\psi^2_s}{\psi^2}
\overset{(\ref{coeff})}{=}-\frac{1}{\sqrt{n}}\frac{1}{s^2}+O(s^{\mu-2})\\
\notag\partial_s(\frac{\psi^2_s}{\psi^2})=&
2\frac{\psi_s}{\psi}\partial_s(\frac{\psi_s}{\psi})=-\frac{2}{n}\frac{1}{s^3}
+O(s^{\mu-3}),
\end{align}
where $b:=\min\{\frac{2\mu}{\sqrt{n}},\mu\}$.
Further, from (\ref{rhoasym}) and (\ref{s}) we have
\begin{align}\label{s_t}
\partial_ts=\frac{\lambda}{\sqrt{\epsilon(t)}}\rho_t(x)+\sqrt{\epsilon(t)}\partial_t\rho_t(x)
=\frac{\lambda}{\epsilon(t)}s+\frac{\sqrt{n}-1}{s}+O(s^{\mu-1}).
\end{align}
It follows that ($x,t$ are sufficiently small)
\begin{align*}
\frac{\sqrt{n}-1}{2s}\leq\partial_ts\leq\frac{2(\sqrt{n}-1)}{s}
\end{align*}
and hence
\begin{align*}
\sqrt{n}-1\leq2s\partial_ts\leq4(\sqrt{n}-1),
\end{align*}
Thus, integrating with respect to $t$
we derive
\begin{align}\label{leqsleq}
x^2+(\sqrt{n}-1)t\leq s^2\leq x^2+4(\sqrt{n}-1)t.
\end{align}
\begin{remark}\label{intcoeff}
According to Remark \ref{flow}, the lower bound
\begin{align}\label{sgeq}
\inf_{x\in(0,\delta)}s^2:=b(t)\geq(\sqrt{n}-1)t
\end{align}
holds for small $t>0$ and $b(t)$ is non-decreasing.
Hence,we derive
\begin{align}\label{1/s}
\int^t_0\frac{1}{s}d\tau
\leq\int^t_0\frac{1}{\sqrt{(\sqrt{n}-1)\tau}}d\tau\leq2\sqrt{\frac{t}{\sqrt{n}-1}}
<+\infty
\end{align}
and similarly for the leading term of $\frac{1}{\psi^2}$
\begin{align}\label{1/s1+}
\int^t_0\frac{1}{s^\frac{2}{\sqrt{n}}}d\tau<c(t)<+\infty
\end{align}
for all $n>1$, $x\in(0,\delta)$.
However, the time integral on $[0,t]$ of the most singular coefficients (\ref{coeff}) of (\ref{pde})
blows up as $x\rightarrow0^+$. Indeed, by $(\ref{leqsleq})$ it follows that
\begin{align}\label{1/s2}
\lim_{x\rightarrow0^+}\int^t_0\frac{1}{s^2}d\tau\geq
\lim_{x\rightarrow0^+}
\frac{1}{4(\sqrt{n}-1)}\log\frac{x^2+4(\sqrt{n}-1)t}{x^2}
=+\infty.
\end{align}
%
%
%
%
%
\end{remark}
Now we specialize to the {\bf HC} case to derive  an understanding of the coefficients and parameters we are interested in
near $x=+\infty$. First, from
(\ref{partial_trho_tinfty}), (\ref{rho_tinfty}) and (\ref{s}) for $x>M$ large
and $0\leq t\leq T$ small we have
\begin{align}\label{s_tinfty}
-C\leq\partial_ts\leq-c&&c,C>0
\end{align}
and
\begin{align}\label{sinfty}
x-Ct\leq s\leq x-ct,
\end{align}
Thus, by (\ref{psi(.,.)}), additionally to
the above asymptotics (\ref{coeff}),
invoking also the estimate (\ref{asyminfty2})
we obtain
\begin{align}\label{coeffinfty}
\frac{1}{\psi^2}=O(\frac{1}{s})&&
\frac{\psi_s}{\psi}=O(\frac{1}{s})&&
\frac{\psi^2_s}{\psi^2}=(\frac{1}{s^2})
&&\frac{\psi_{ss}}{\psi}+(n-1)\frac{\psi^2_s}{\psi^2}=
O(\frac{1}{s^2})
\end{align}
and analogously to (\ref{coeff_s})
\begin{align}\label{coeff_sinfty}
\partial_s(\frac{1}{\psi^2})=O(\frac{1}{s^2})&&
\partial_s(\frac{\psi_s}{\psi})=O(\frac{1}{s^2})&&
\partial_s(\frac{\psi^2_s}{\psi^2})=O(\frac{1}{s^3}).
\end{align}
Also, corresponding to (\ref{omega}) we have
\begin{align}\label{omegainfty}
-C\leq\omega\leq-c,&&\omega_s=O(\frac{1}{s^2}).
\end{align}

Finally, it will also be useful furtherdown
to note a few properties of the {\it less}
singular coefficients in (\ref{pde}),  namely,
$\frac{1}{\psi^2}$.
By (\ref{coeff}) and (\ref{coeffinfty}) we derive that we can write
\begin{align}\label{A(t)}
\frac{1}{\psi^2}=:\frac{A(s,t)}{s},&&\partial_s(\frac{A(s,t)}{s})=
-2\frac{1}{\psi^2}\frac{\psi_s}{\psi}=\frac{A(s,t)}{s}O(\frac{1}{s}),
\end{align}
where setting
\begin{align}\label{intA(t)}
A(t):=\|A(s,t)\|_{L^\infty(s)}, &&\int^t_0A^2(\tau)d\tau=o(1),\qquad\text{as $t\rightarrow0^+$};
\end{align}
see Remark \ref{intcoeff}.
\newline

As explained earlier, the energy spaces  we will be dealing with will be defined
relative to the background arc-length parameter $s(x,t)=\rho_t(x)$. We note some useful formulas that
will be useful further down in the derivation of our energy estimates.
The first issue  is that due to the coordinate change (\ref{s}),
the vector fields $\partial_s,\partial_t$
(the first considered over $M^{n+1}$, and being the arc-length parameter of the background soliton,
while the second is defined so that $\partial_tx=0$) do not
commute. In fact, we find
\begin{align*}
\tag{see again (\ref{s})}\frac{\partial}{\partial t}\frac{\partial}{\partial s}
=&\frac{\partial}{\partial t}
\bigg(\frac{1}{\sqrt{\epsilon(t)}\,\partial_x\rho_t(x)}\frac{\partial}{\partial x}\bigg)\\=&
-\frac{\lambda}{\epsilon(t)^{\frac{3}{2}}}\frac{1}{\partial_x\rho_t(x)}\frac{\partial}{\partial x}
-\frac{1}{\sqrt{\epsilon(t)}}\frac{\partial_t(\partial_x\rho_t(x))}{(\partial_x\rho_t(x))^2}\frac{\partial}{\partial x}
+\frac{1}{\sqrt{\epsilon(t)}\,\partial_x\rho_t(x)}\frac{\partial}{\partial t}\frac{\partial}{\partial x}\\
=&-\frac{\lambda}{\epsilon(t)}\frac{\partial}{\partial s}
-\frac{\partial_x\partial_t\rho_t(x)}{\partial_x\rho_t(x)}\frac{\partial}{\partial s}
+\frac{1}{\sqrt{\epsilon(t)}\,\partial_x\rho_t(x)}\frac{\partial}{\partial x}\frac{\partial}{\partial t}\\
\tag{using (\ref{partial_trho_t})}=&-\frac{\lambda}{\epsilon(t)}\frac{\partial}{\partial s}
-\frac{\partial_x\big[\frac{1}{\epsilon(t)}\omega(\rho_t(x))\big]}{\partial_x\rho_t(x)}\frac{\partial}{\partial s}
+\frac{\partial}{\partial s}\frac{\partial}{\partial t}\\
=&-\frac{\lambda+\dot{\omega}(\rho_t(x))}{\epsilon(t)}\frac{\partial}{\partial s}
+\frac{\partial}{\partial s}\frac{\partial}{\partial t}\\
\tag{from the first equation of (\ref{ode})}=&
-\frac{1}{\epsilon(t)}\cdot n\frac{\ddot{\psi}}{\psi}\bigg|_{\rho_t(x)}\frac{\partial}{\partial s}
+\frac{\partial}{\partial s}\frac{\partial}{\partial t}
\end{align*}
Thus, using (\ref{xasym}) and (\ref{asyminfty}) we derive
\begin{align}\label{[s,t]}
\notag[\partial_t,\partial_s]&\overset{(\ref{s})}{=}
-\frac{1}{\epsilon(t)}\cdot n\frac{\ddot{\psi}}{\psi}\bigg|_{\frac{s}{\sqrt{\epsilon(t)}}}\partial_s\\
&=\left\{\begin{array}{ll}
[(\sqrt{n}-1)s^{-2}+O(s^{\mu-2})]\partial_s,&x,t\ll1,\;{\bf G}\\
O(s^{-2})\partial_s, &(x,t)\in(0,+\infty)\times[0,T],\;{\bf HC}
\end{array}\right.
\end{align}
Since the Sobolev spaces we will be considering will be with respect to the arc-length 1-form $ds$
we must also calculate the evolution of this form.
\begin{align*}
\tag{recall (\ref{s})}\partial_tds=&\;\partial_t(\sqrt{\epsilon(t)}\,\partial_x\rho_t(x)dx)
=\frac{\lambda}{\sqrt{\epsilon(t)}}\partial_x\rho_t(x)dx
+\sqrt{\epsilon(t)}\,\partial_t\partial_x\rho_t(x)dx\\
\tag{using (\ref{partial_trho_t})}=&\;\frac{\lambda}{\epsilon(t)}ds
+\sqrt{\epsilon(t)}\,\partial_x\big[\frac{1}{\epsilon(t)}\omega(\rho_t(x))\big]dx\\
=&\;\frac{\lambda+\dot{\omega}(\rho_t(x))}{\epsilon(t)}ds
\tag{see the first equation of (\ref{ode})}=\frac{1}{\epsilon(t)}\cdot n\frac{\ddot{\psi}}{\psi}\bigg|_{\rho_t(x)}ds
\end{align*}
Employing (\ref{xasym}) and (\ref{asyminfty}) once more we obtain ($t\ge0$ small)
\begin{align}\label{partial_tds}
\notag\partial_tds=&\frac{1}{\epsilon(t)}\cdot n\frac{\ddot{\psi}}{\psi}\bigg|_{\frac{s}{\sqrt{\epsilon(t)}}}ds\\
=&\left\{\begin{array}{ll}
[-(\sqrt{n}-1)s^{-2}+O(s^{\mu-2})]\partial_s,&x,t\ll1,\;{\bf G}\\
O(s^{-2})\partial_s, &(x,t)\in(0,+\infty)\times[0,T],\;{\bf HC}
\end{array}\right.
\end{align}

\subsection{The weighted Sobolev spaces and the main result}\label{wSsp}
\noindent

As explained the singularities of the coefficients, at $(x=0,t=0)$, in
the system (\ref{pde}), along with the asymptotic expansions we have derived
force us to study well-posedness in {\it weighted} Sobolev spaces.
The weights will be adapted to the singularity at $x=0,t=0$.
To construct these, we define:
\begin{definition}\label{weight}
Let
\begin{align}\label{x_0}
x_0:=\left\{\begin{array}{ll}
\delta,\;{\bf G}\\
1,\;{\bf HC}
\end{array}\right.
\end{align}
Given $\sigma>0$ (to be determined later) and $T>0$ small we define
\begin{align}\label{l}
\ell^2=\left\{\begin{array}{lll}
s^2+\sigma t,&(x,t)\in(0,x_0)\times[0,T],\;{\bf G}\\
\varphi(s,t),&(x,t)\in[x_0,x_0+1)\times[0,T],\;{\bf HC}\\
1,&(x,t)\in[x_0+1,+\infty)\times[0,T],\;{\bf HC}
\end{array}\right.
\end{align}
where $\varphi(\cdot,t)$ is smooth cut off function interpolating between
$\displaystyle\lim_{x\rightarrow x_0^-}\ell^2(x,t)$ and $1$,
for each $t\in[0,T]$, having bounded derivatives.
\end{definition}
We recall at this point that for the general solitons ({\bf G}) we consider
manifolds in the form
\begin{align}\label{locx}
(0,\delta)\times\mathbb{S}^n,
\end{align}
whereas in the {\bf HC} case we study the stability of the whole
half-complete gradient steady Ricci solitons presented in \S\ref{steady}; see again Propositions
\ref{incgRS} and \ref{steadysol}. Further,
we shall investigate the evolution of the Ricci flow (\ref{RFg_t}), (\ref{RFtildeg_t})
only for positive time $t\in[0,T]$ such that
\begin{align}\label{loct}
\epsilon(t):=1+2\lambda t>\frac{1}{2}.
\end{align}
A note is in order here: Since the Sobolev spaces we will be considering
are relative to the length element $ds$, it is worth
recalling the behaviour of $s(x,t):=\rho_t(x)$ at the endpoints $x=0, x=+\infty$
for {\bf HC} and $x=0, x=\delta$ for {\bf G}. We will always consider the values
of the arc length parameter $s$ at these points to be the endpoints of integration at each time $t\ge0$.
\begin{definition}\label{endpts}
Let
\begin{align}\label{ints1}
s_{min}(t):=\lim_{x\rightarrow0^+}s(x,t)&&s_0(t):=s(x_0,t)&&{\bf G}
\end{align}
and
\begin{align}\label{ints2}
s_{max}(t):=\lim_{x\rightarrow+\infty}s(x,t)\overset{(\ref{sinfty})}{=}+\infty
&&{\bf HC},
\end{align}
for each $t\geq0$.
\end{definition}
\begin{definition}\label{wSob}
For any given $t\in [0,T]$ we consider the parameter $s=\rho_t(x)$ and
let $H^k_\alpha(s)$, $\alpha\geq1$,\footnote{We are suppressing the parameter $t$  in this notation.}
be the Hilbert space of all functions
\begin{align*}
u\in H^k(s)
\end{align*}
endowed with the norm
\begin{align*}
\|u\|_{H^k_\alpha(s)}^2=\int \frac{u^2}{\ell^{2\alpha}}+\dots
+\frac{(\partial^k_su)^2}{\ell^{2\alpha-2k}} ds<+\infty.
\end{align*}
In the special case $k=0$, we denote $H^0_\alpha(s)$ by $L^2_{\alpha}(s)$. We also let $H^1_{\alpha,0}(s)$
be the closure, in $H^1_\alpha(s)$, of all
compactly supported smooth functions in $(0,+\infty)$ $\big[(0,\delta)$, {\bf G}\big].
As usual, we define the spaces
\begin{align*}
L^2(0,T;H^k_\alpha(s))&&L^\infty(0,T;H^k_\alpha(s))
\end{align*}
of measurable functions
\begin{align*}
u:[0,T]\to H^k(s)
\end{align*}
having finite norms
\begin{align*}
\|u\|^2_{L^2(0,T;H^k_\alpha(s))}:=\int^T_0\|u\|^2_{H^k_\alpha(s)}dt,
&&\|u\|_{L^\infty(0,T;H^k_\alpha(s))}:=\operatornamewithlimits{ess\,sup}_{t\in[0,T]}\|u\|_{H^k_\alpha(s)}
\end{align*}
respectively.
Often, when the context is clear, we will suppress the parameter $s$ in the notation.
\end{definition}
We are going to need the following properties of the weight $\ell$.
By (\ref{l}), for
fixed time $t\in[0,T]$, we have
\begin{align}\label{l_s}
\partial_s\ell=\frac{s}{\ell}\textbf{1}_{(0,x_0)}
+\partial_s\varphi_1\textbf{1}_{[x_0,x_0+1)}:=
\left\{\begin{array}{lll}
\frac{s}{\ell},&x\in(0,x_0)\\
O(1),&x\in[x_0,x_0+1),\;{\bf HC}\\
0,&[x_0+1,+\infty),\;{\bf HC}
\end{array}\right.
\end{align}
Similarly, for fixed $x$, employing (\ref{s_t}) we derive
\begin{align}\label{l_t}
\notag\partial_t\ell=&\;\frac{\partial_ts^2+2\sigma}{2\ell}\textbf{1}_{(0,x_0)}
+\partial_t\varphi_1\textbf{1}_{[x_0,x_0+1)}
=\frac{2s\partial_ts+2\sigma}{2\ell}\textbf{1}_{(0,x_0)}+O(1)\textbf{1}_{[x_0,x_0+1)}\\
=&\;\big[\frac{O(1)}{\ell}
+\frac{\sigma}{\ell}\big]\textbf{1}_{(0,x_0)}
+O(1)\textbf{1}_{[x_0,x_0+1)}
\end{align}
Also, combining (\ref{sgeq}), (\ref{sinfty}) we obtain the following comparison
estimate of $s,\ell$
\begin{align}\label{l/s}
0<c\leq\frac{\ell^2}{s^2}=
\left\{\begin{array}{ll}
1+\frac{2\sigma t}{s^2}\\
\frac{O(1)}{s^2}
\end{array}\right.
\leq
\left\{\begin{array}{ll}
1+\frac{2\sigma}{\sqrt{n}-1},&x\in(0,x_0)\\
C, &x\in[x_0,+\infty),\;{\bf HC}
\end{array}\right.
&&(n>1)
\end{align}
for each $0\leq t\leq T$.

Now we can proceed to the
well-posedness problem for (\ref{pde}). First, we
introduce the function spaces in which we will be constructing the solutions to
 our problem and then state the main theorem.
\begin{definition}\label{sol}
Assume initially ($t=0$)
\begin{align*}
\eta_0\in H^1_{\alpha}&&\xi_0\in H^1_{\alpha,0},
\end{align*}
Then, two functions $\eta,\xi$ defined over $M^{n+1}\times  [0,T]$
are a (strong)  solution of (\ref{pde}), if
\begin{align*}
\eta\in L^\infty(0,T;H^1_\alpha)\cap L^2(0,T;H^1_{\alpha+1})&&
\xi\in L^\infty(0,T;H^1_{\alpha,0})\cap L^2(0,T;H^2_{\alpha+1})\\
\eta_t\in L^2(0,T;L^2_\alpha)&&\xi_t\in L^2(0,T;L^2_{\alpha-1})
\end{align*}
and moreover satisfy the system (\ref{pde}) in the usual sense of test functions.
\end{definition}
We remark here the fact that once we have such a solution to (\ref{pde}), then we
straightforwardly derive that this solution $(\eta,\xi)$ corresponds to a
solution of (\ref{RFtildeg_t}), which in fact will be smooth over $M^{n+1}\times (0,T]$,
given the parabolicity of the Ricci flow.
\newline

We define the energy
\begin{align}\label{energy}
\mathcal{E}(u,v;T)=\|u\|^2_{L^\infty(0,T;H^1_\alpha)}+
\|u\|^2_{L^2(0,T;H^1_{\alpha+1})}+
\|v\|^2_{L^\infty(0,T;H^1_\alpha)}+
\|v\|^2_{L^2(0,T;H^2_{\alpha+1})}
\end{align}
and for brevity let
\begin{align}\label{inen}
\mathcal{E}_0=\|\eta_0\|^2_{H^1_\alpha}+
\|\xi_0\|^2_{H^1_\alpha}.
\end{align}
\begin{theorem}\label{gensol}
There exist $\alpha,\sigma$ appropriately large and $\mathcal{E}_0,T>0$ sufficiently
small, such that (\ref{pde}) has a unique
solution up to time $T$
\begin{align}\label{gensp}
\notag\eta\in L^\infty(0,T;H^1_\alpha)\cap L^2(0,T;H^1_{\alpha+1})&&
\xi\in L^\infty(0,T;H^1_{\alpha,0})\cap L^2(0,T;H^2_{\alpha+1})\\
\eta_t\in L^2(0,T;L^2_\alpha)&&\xi_t\in L^2(0,T;L^2_{\alpha-1})\\
\notag\eta\bigg|_{t=0}=\eta_0&&
\xi\bigg|_{t=0}=\xi_0
\end{align}
subject to the Dirichlet boundary condition
\begin{align}\label{genbdc}
\xi\bigg|_{(s_{min},t)}=0&&\big(\text{and}\;\;\xi\bigg|_{(s_0,t)}=0\;\;\text{in case {\bf G}}\big),
\end{align}
for each $t\in[0,T]$. Furthermore the solution satisfies
\begin{align}\label{genest}
\mathcal{E}(\eta,\xi;T)\leq2\widetilde{C}\mathcal{E}_0.
\end{align}
\end{theorem}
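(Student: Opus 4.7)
The plan is to follow the Picard-iteration strategy outlined in the introduction. Starting from the constant-in-time iterates $(\eta^0, \xi^0) := (\eta_0, \xi_0)$, one solves at each step the modified linear system \eqref{itpde} in which the highest-order spatial terms together with the critically singular lower-order terms (those whose coefficients contain a factor $\frac{1}{s^2}$ or $\frac{1}{\psi^2}$) carry the new unknowns $(\eta^{m+1}, \xi^{m+1})$, while the remaining coefficients are frozen at $(\eta^m, \xi^m)$. Well-posedness of each such linear problem in the class \eqref{gensp} is the content of Section \ref{modprob}, where the ODE for $\eta^{m+1}$ is integrated explicitly and the parabolic equation for $\xi^{m+1}$ is handled by a Galerkin scheme adapted to the weighted spaces $H^k_\alpha$. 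Granting these linear theorems, the task reduces to (i) a uniform a priori bound $\mathcal{E}(\eta^m,\xi^m;T) \le 2\widetilde{C}\mathcal{E}_0$, and (ii) contraction of the differences $(\eta^{m+1}-\eta^m,\xi^{m+1}-\xi^m)$ in a slightly weaker norm.

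\textbf{Energy estimates.} The key mechanism is the interplay between the time-dependent weight $\ell^2 = s^2+\sigma t$ and the singular potentials in \eqref{pde}. Differentiating $\int (\eta^{m+1})^2\ell^{-2\alpha}\,ds$ in time and using \eqref{l_t}, \eqref{partial_tds} produces, besides the expected RHS contributions, a \emph{good} definite term of size $\alpha\sigma \int(\eta^{m+1})^2\ell^{-2\alpha-2}\,ds$ with a favorable sign. Similarly, integrating the $\xi^{m+1}$-equation by parts in $s$ yields the dissipation $\int \xi_{ss}^2\,\ell^{-2\alpha}\,ds$, which once combined with weight derivatives upgrades to the stated $L^2(0,T;H^2_{\alpha+1})$-control. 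Boundary contributions at $s_{\min}(t)$ (and $s_0(t)$ in case \textbf{G}) vanish thanks to \eqref{genbdc} and membership of $\xi^{m+1}$ in $H^1_{\alpha,0}$, while those at $+\infty$ vanish by the cutoff in Definition \ref{weight}. The commutator $[\partial_t,\partial_s]$ from \eqref{[s,t]} introduces extra $s^{-2}$ factors, which are absorbed by the same gain.

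\textbf{The main obstacle} is closing the estimate in the presence of the most singular coefficients, in particular the $\frac{1}{s^2}$ contained in $\frac{\psi_{ss}}{\psi}+(n-1)\frac{\psi_s^2}{\psi^2}$ via \eqref{coeff}; recall from \eqref{1/s2} that this factor is \emph{not} time-integrable as $x\to 0^+$, so a naive Gronwall argument cannot succeed. The remedy is precisely the gain $\alpha\sigma\,\ell^{-2\alpha-2}$ produced by the time-differentiated weight: using the comparison \eqref{l/s} to trade factors of $s$ for factors of $\ell$, the leading $n$-dependent constants from the singular potentials can be \emph{dominated} by $\alpha\sigma$ once $\alpha$ is taken large enough and then $\sigma$ is taken correspondingly large. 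This forces a strictly negative net coefficient in front of the $\ell^{-2\alpha-2}$-weighted $L^2$ norm of the unknowns; the subcritical pieces (involving $\frac{1}{\psi^2}\sim s^{-2/\sqrt n}$) and the inhomogeneous terms depending on $(\eta^m,\xi^m)$ are then absorbed into this reservoir up to errors of size $C\sqrt{T}\,\mathcal{E}(\eta^m,\xi^m;T)$, yielding (i) once $\mathcal{E}_0, T$ are small enough.

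\textbf{Contraction and conclusion.} For (ii), the differences $(\delta\eta^m, \delta\xi^m):=(\eta^{m+1}-\eta^m, \xi^{m+1}-\xi^m)$ satisfy a system of the same shape as \eqref{itpde} with zero initial data and source terms quadratic in the previous differences. The very same energy computation, performed one level of regularity lower (i.e.\ in $L^2_\alpha\cap H^1_{\alpha-1}$, which is permitted thanks to the quadratic structure), gives $\mathcal{E}_{\mathrm{low}}(\delta\eta^m,\delta\xi^m;T)\le \tfrac12\,\mathcal{E}_{\mathrm{low}}(\delta\eta^{m-1},\delta\xi^{m-1};T)$ after possibly shrinking $T$, so the iterates converge in this weaker norm. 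Weak-$*$ compactness in \eqref{gensp} and lower semicontinuity of the norms upgrade this limit to the full regularity class and deliver the bound \eqref{genest}; the Dirichlet condition passes to the limit since $H^1_{\alpha,0}$ is weakly closed, and the limit solves \eqref{pde} because the "frozen" lower-order terms in \eqref{itpde} coincide with the nonlinear ones at the fixed point. Uniqueness within \eqref{gensp} follows by applying the same contraction estimate to the difference of two hypothetical solutions sharing the initial data $(\eta_0,\xi_0)$.
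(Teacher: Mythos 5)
Your high-level strategy is the same as the paper's: a modified Picard iteration \eqref{itpde} in which the critically singular lower-order terms carry the new unknowns $(\eta^{m+1},\xi^{m+1})$, weighted energy estimates built on $\ell^2=s^2+\sigma t$, and a linear step (\S\ref{modprob}) handled by explicit ODE integration for $\eta$ and a Galerkin construction for $\xi$. You have correctly identified why the iteration has to be set up this way (the $L^\infty_x$-bound $\frac1{s^2}\le\frac Ct$ of the worst coefficient is not time-integrable, cf.\ \eqref{1/s2}) and why the good term $\alpha\sigma\ell^{-2\alpha-2}$ produced by $\partial_t\ell$ lets one dominate those coefficients. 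Small detail: the paper initializes with $\eta^0=\xi^0=0$, not with $(\eta_0,\xi_0)$.

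The genuine departure, and the only place with a gap, is the contraction step. The paper closes the contraction \emph{in the full energy norm} $\mathcal E$, arriving at the two-term recurrence \eqref{contr}, $\sqrt{\mathcal E(d\eta^{m+1},d\xi^{m+1})}\le\kappa\sqrt{\mathcal E(d\eta^m,d\xi^m)}+\kappa\sqrt{\mathcal E(d\eta^{m-1},d\xi^{m-1})}$ with $\kappa<\tfrac14$. You propose instead contracting one derivative level lower and upgrading by weak compactness. That is a standard device, but here it is not obviously available, for two reasons you do not address. First, the differences system \eqref{dpde} contains the term $\xi^m_{ss}(d\eta^m+d\xi^m)$; in an $L^2_\alpha$-only contraction one must bound $\|d\xi^m\|_{L^\infty(s)}$ and $\|d\eta^m\|_{L^\infty(s)}$, and the $L^\infty$ bound on $d\eta^m$ (Lemma \ref{lemdpm}, estimate \eqref{pdeta}) is obtained from the $\eta$-ODE and necessarily pulls in $\int_0^T\|d\xi^{m-1}\|^2_{H^2_{\alpha+1}}dt$ — the full, not the lowered, norm of the \emph{second} previous difference. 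This is exactly why \eqref{contr} is a two-term recurrence and why the estimates in \S\ref{L2detadxi}–\ref{L2deta_sdxi_s} are nested: the critically weighted $L^2_{\alpha+1}$ control of $d\eta^{m+1},d\xi^{m+1}$ established in \eqref{L2L2detadxi} is itself an input to the derivative estimate \eqref{sumdeta_sdxi_s2}. Second, even the $L^2_\alpha$-step alone does not close by Gronwall unless one integrates by parts in the term $\int\frac{\psi_s}{\psi}\frac{Bd\xi^{m+1}d\eta^{m+1}_s}{\ell^{2\alpha}}ds$, precisely to avoid invoking $\|d\eta^{m+1}_s\|$; this structural trick is not mentioned in your plan. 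Without these two ingredients your weaker-norm contraction does not visibly close, and the compactness upgrade has nothing to stand on.
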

\begin{remark}
In fact it is easy to observe that in the half-complete case  the above implies that for any $T>0$ and $\alpha,\sigma>0$ 
large enough, the initial energy $\mathcal{E}_0$ can be picked sufficiently small 
to have existence up to time $T>0$. On the other hand,  the system (\ref{pde}) 
does not seem to be globally stable (i.e. up to $T=\infty$) in the half-complete case, 
 due to the behavior of the coefficients at $x=\infty$. 

\end{remark}
\section{The Contraction Mapping}\label{genprob}
\noindent

We will prove Theorem \ref{gensol} via an iteration  scheme,
which is essentially a contraction mapping argument;
see (\ref{contr}) below.
We note that throughout the subsequent estimates we will use the symbol $C$ to denote
a positive constant depending only on $n$.

\subsection{The iteration scheme and some basic estimates}\label{contrit}
\noindent

We will construct a sequence $\big\{\eta^m,\xi^m\big\}^\infty_{m=0}$ satisfying
\begin{align}\label{itpde}
\notag\eta^{m+1}_t=&\;
2n(n-1)\bigg(\frac{\psi^2_s}{\psi^2}\frac{2n\xi^{m+1}
+\sum_{j=2}^{2n}\binom{2n}{j}|\xi^m|^j}{(\xi^m+1)^{2n}}
-2\frac{\psi_s}{\psi}\frac{\xi^{m+1}_s}{(\xi^m+1)^{2n+1}}\\
&-\frac{A(s,t)}{s}\xi^m\frac{\xi^m+2}{(\xi^m+1)^2}
\notag-\frac{|\xi^m_s|^2}{(\xi^m+1)^{2n+2}}
-\frac{A(s,t)}{s}\xi^m\frac{\xi^m+2}{(1+\xi^m)^2}\eta^{m+1}
+\frac{\psi_s^2}{\psi^2}\eta^{m+1}\bigg)\\
\xi^{m+1}_t=&\;(\frac{\psi_{ss}}{\psi}+(n-1)\frac{\psi^2_s}{\psi^2})
\bigg[\frac{-\eta^{m+1}-2n(\eta^m+1)\xi^{m+1}}{(\eta^m+1)(\xi^m+1)^{2n-1}}
-\frac{\sum_{j=2}^{2n}\binom{2n}{j}|\xi^m|^j}{(\xi^m+1)^{2n-1}}\bigg]\\
&\notag+(n-1)\frac{A(s,t)}{s}\xi^m\frac{\xi^m+2}{\xi^m+1}
+n\frac{\psi_s}{\psi}\frac{\xi^{m+1}_s}{(\eta^m+1)(\xi^m+1)^{2n}}
+\frac{\xi^{m+1}_{ss}}{(\eta^m+1)(\xi^m+1)^{2n}}\\
&-\frac{|\xi^m_s|^2}{(\eta^m+1)(\xi^m+1)^{2n+1}}
\notag-\frac{1}{2}\frac{\psi_s}{\psi}\frac{\eta^{m+1}_s}{(\eta^m+1)^2(\xi^m+1)^{2n-1}}
-\frac{1}{2}\frac{\eta^m_s\xi^m_s}{(\eta^m+1)^2(\xi^m+1)^{2n}},
\end{align}
where we set $\eta^0=\xi^0=0$ and initially
\begin{align}\label{init}
\eta^{m+1}\bigg|_{t=0}=\eta_0\qquad
\xi^{m+1}\bigg|_{t=0}=\xi_0&&m=0,1,\ldots
\end{align}
Further, $ \xi^{m+1}$ is required to satisfy
the Dirichlet boundary condition
\begin{align}\label{itbdc}
\xi^{m+1}\bigg|_{(s_{min},t)}=0&&\big(\text{and}\;\;\xi^{m+1}\bigg|_{(s_0,t)}=0,\;\;\text{in case {\bf G}}\big)
\end{align}
for all $m\in\mathbb{N}$, $t\ge0$; recall (\ref{ints1}), (\ref{ints2}).
Our main claim from this point onwards will
be to show that the iterates converge to a solution of (\ref{pde}). Our method of proof
will readily imply that any solution of (\ref{pde}) in the sense of Definition \ref{sol} will be unique.
\newline

\par Note that (\ref{itpde}) is linear at each step $m+1$, {\it yet the lower-order terms in the
RHSs associated to the most singular coefficients involve the unknown functions $\eta^{m+1},\xi^{m+1}$}.
In view of the asymptotic behavior (\ref{coeff}) of these coefficients, as $x,t\rightarrow0^+$,
and the non-integrability (in time) of  $\|\frac{1}{s^2}\|_{L^\infty(s)}$ (Remark \ref{intcoeff}),
one {\it could not} hope to close  energy estimates for the above system, if one had not set up an iteration
of this kind.

\par Our irst claim is:
\begin{proposition}\label{itsol}
There exist $\alpha,\sigma$ appropriately large and $\mathcal{E}_0,T>0$ sufficiently
small, such that (\ref{itpde}) has a unique solution at each step $m+1\in\{1,2,\ldots\}$,
in the sense of Definition \ref{sol}. In particular:
\begin{align}\label{itsp}
\eta^{m+1}\in L^\infty(0,T;H^1_\alpha)\cap L^2(0,T;H^1_{\alpha+1}),\;
\xi^{m+1}\in L^\infty(0,T;H^1_{\alpha,0})\cap L^2(0,T;H^2_{\alpha+1})\\
\notag\eta^{m+1}_t\in L^2(0,T;L^2_\alpha)\qquad\qquad\qquad\qquad\;\xi^{m+1}_t\in L^2(0,T;L^2_{\alpha-1})
\end{align}
and
\begin{align}\label{itsmooth}
\eta^{m+1},\xi^{m+1}\in
C^\infty\big((0,+\infty)\times[0,T]\big)&&\bigg[\text{or}\;\;C^\infty\big((0,\delta)\times[0,T]\big),\;\;\text{for}\;\;{\bf G}\bigg],
\end{align}
$m\in\mathbb{N}$, $\eta^0=\xi^0=0$, subject to (\ref{init}), (\ref{itbdc}). Moreover,
the sequence $\big\{\eta^m,\xi^m\big\}^\infty_{m=0}$
has uniformly bounded energy
\begin{align}\label{itenest}
\mathcal{E}(\eta^m,\xi^m;T)\leq2\widetilde{C}\mathcal{E}_0&&m=0,1,\ldots
\end{align}
for some constant $\widetilde{C}>0$.
\end{proposition}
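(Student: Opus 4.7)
The plan is to prove Proposition \ref{itsol} by induction on $m$, with $(\eta^0, \xi^0) = (0, 0)$ as the trivial base case. For the inductive step, I would assume $(\eta^m, \xi^m)$ satisfies (\ref{itsp})--(\ref{itsmooth}) and (\ref{itenest}), and construct $(\eta^{m+1}, \xi^{m+1})$ solving the linear system (\ref{itpde}) with initial data (\ref{init}) and boundary condition (\ref{itbdc}), lying in the claimed spaces, and satisfying (\ref{itenest}) with a constant $\widetilde{C}$ independent of $m$.

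\textbf{Linear well-posedness.} System (\ref{itpde}) couples a first order (in $t$) equation for $\eta^{m+1}$, forced by $\xi^{m+1}$ and $\xi^{m+1}_s$, with a second order parabolic equation for $\xi^{m+1}$, forced by $\eta^{m+1}$ and $\eta^{m+1}_s$; its coefficients are built from the background soliton and from the known iterate $(\eta^m, \xi^m)$. Existence and uniqueness of such a solution in the weighted spaces (\ref{itsp}) is deferred to Section \ref{modprob}, where an inner iteration is employed: for a given $\xi$, the $\eta$-equation is a linear ODE in $t$ pointwise in $s$ and is integrated explicitly; the result is fed back into the parabolic equation for $\xi$, which is solved by a Galerkin scheme adapted to the weighted $H^1_\alpha$-spaces. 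Interior parabolic regularity on $\{x > 0\}$ then yields the smoothness claim (\ref{itsmooth}).

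\textbf{Energy estimates.} The heart of the argument is to obtain (\ref{itenest}) at level $m+1$. I would test the $\eta^{m+1}$- and $\xi^{m+1}$-equations against $\eta^{m+1}/\ell^{2\alpha}$ and $\xi^{m+1}/\ell^{2\alpha}$ respectively, and the $\partial_s$-differentiated equations against $\eta^{m+1}_s/\ell^{2\alpha-2}$ and $\xi^{m+1}_s/\ell^{2\alpha-2}$, integrating in $ds\,dt$ and using (\ref{[s,t]}) and (\ref{partial_tds}) to commute $\partial_t$ past $\partial_s$ and past the evolving measure at the cost of $O(s^{-2})$ commutators. Integration by parts on $\xi^{m+1}_{ss}/\ell^{2\alpha}$ produces the good dissipative quantity $\int |\xi^{m+1}_s|^2/\ell^{2\alpha}\,ds$ on the left-hand side, and likewise at one derivative higher it produces the $H^2_{\alpha+1}$-control of $\xi^{m+1}$. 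The time derivative falling on $\ell^{-2\alpha}$ contributes a crucial positive term of the form $\sigma\alpha \int u^2/\ell^{2\alpha+2}\,ds$ by (\ref{l_t}). Boundary terms at $s_{\min}$ and at $s_0$ (in case $\mathbf{G}$) vanish by (\ref{itbdc}), while the $\mathbf{HC}$ far end is harmless thanks to the constant weight (\ref{l}) and the bounds (\ref{coeffinfty}).

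\textbf{Main obstacle.} The reason (\ref{itpde}) is set up so that $\eta^{m+1}$ and $\xi^{m+1}$ appear inside the most singular coefficients $\psi_s^2/\psi^2 \sim 1/(n s^2)$ and $(n-1)/\psi^2$, rather than being left as sources multiplying $\eta^m, \xi^m$, is that $\|1/s^2\|_{L^\infty_s}$ fails to be integrable in time by Remark \ref{intcoeff}, so a Gronwall argument on those contributions could not close. With the present placement, the corresponding contributions to the energy estimate either have a definite favorable sign or are absorbed by the $\sigma$-term from $\partial_t \ell^{-2\alpha}$, once (\ref{l/s}) is used to trade the weight $\ell$ for the arc-length $s$; this is precisely what forces $\alpha$ and $\sigma$ to be chosen large. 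The less singular factor $A(s,t)/s$ from (\ref{A(t)}) is controlled by (\ref{intA(t)}), giving an $o(1)$ prefactor as $T \to 0^+$. Remaining lower-order and nonlinear terms, i.e. products of $\eta^m, \xi^m$ with $\eta^{m+1}, \xi^{m+1}$, are handled by Cauchy--Schwarz, the one-dimensional Sobolev embedding $H^1 \hookrightarrow L^\infty$, and the inductive bound (\ref{itenest}) for $(\eta^m, \xi^m)$; smallness of $\mathcal{E}_0$ keeps $|\eta^m|, |\xi^m| \le 1/2$ so the denominators $1 + \eta^m, 1 + \xi^m$ stay uniformly bounded below. Fixing $\alpha, \sigma$ sufficiently large and then $\mathcal{E}_0, T$ sufficiently small, the estimate closes with $\mathcal{E}(\eta^{m+1}, \xi^{m+1}; T) \le 2\widetilde{C}\mathcal{E}_0$, completing the induction.
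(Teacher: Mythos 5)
Your proposal captures the paper's strategy accurately. You correctly identify that the existence claim is reduced to a more general linear problem solved by an inner iteration (the paper's system (\ref{modlpde}) with a given $\tilde{\xi}$): one solves the $\eta$-equation as a linear ODE in $t$ pointwise in $s$, feeds the result into the parabolic $\xi$-equation, solves that by a Galerkin scheme in the weighted spaces, and passes to limits. You also correctly pinpoint the two essential structural features that make the energy estimate close: (i) the placement of the unknowns $\eta^{m+1},\xi^{m+1}$ in the lower-order terms carrying the most singular coefficients $\sim 1/s^2$, which is forced by Remark \ref{intcoeff} (these terms are then absorbed rather than treated as forcing), and (ii) the positive absorption term of size $\sim\alpha\sigma\int u^2/\ell^{2\alpha+2}\,ds$ produced when $\partial_t$ falls on $\ell^{-2\alpha}$, available only because the weight $\ell^2 = s^2 + \sigma t$ was built with an explicit $\sigma t$-dependence. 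The only minor imprecision is that the $L^\infty$ bounds actually used are the weighted ones of Lemma \ref{lempm}, derived by integrating $\partial_s|u/\ell^k|^2$ from a point where the value is controlled, rather than the raw one-dimensional Sobolev embedding, but the mechanism is the same. This is essentially the paper's own proof.
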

We postpone the proof
of Proposition \ref{itsol} for the moment to see how we can get from this point
to the solution of the non-linear system (\ref{pde}).
(Actually, we will prove Proposition \ref{itsol} in the next subsection by
solving  more general linear systems of this type).  It is useful at
this point to note the $L^\infty$ bounds that the functions $\eta^m\xi^m$ belonging to
our weighted Sobolev spaces satisfy.
From this point onwards $\eta^m,\xi^m$, $m=1,2,\dots,$ will be solutions of (\ref{itpde})
(replacing $m+1$ by $m$) which lie in the energy spaces (\ref{itsp}).

\begin{lemma}\label{lempm}
The iterates $\eta^m,\xi^m,\xi^m_s$, $n\in\mathbb{N}$,
obey the following pointwise bounds:
\begin{align}\label{pm}
\|\frac{\eta^m}{\ell^k}\|_{L^\infty(s,t)}^2\leq C\widetilde{C}(k+1)\mathcal{E}_0&&
\|\frac{\xi^m}{\ell^k}\|_{L^\infty(s,t)}^2\leq C\widetilde{C}(k+1)\mathcal{E}_0
\end{align}
and
\begin{align}\label{pm_s}
\|\frac{\xi^m_s}{\ell^k}\|_{L^\infty(s)}^2\leq
C\sqrt{\widetilde{C}\mathcal{E}_0}\big(\|\frac{\xi^m_{ss}}{\ell^{\alpha-1}}\|_{L^2}
+k\|\frac{\xi^m_s}{\ell^\alpha}\|_{L^2}\big),
\end{align}
for every $k=0,\ldots\alpha-1$, $\alpha\ge1$, and a.e. $t\in[0,T]$.
\end{lemma}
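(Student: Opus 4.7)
The plan is to derive all three pointwise bounds as consequences of the one-dimensional Sobolev embedding $H^1\hookrightarrow L^\infty$, combined with the uniform energy control from Proposition \ref{itsol} and the elementary properties of the weight $\ell$ recorded in (\ref{l_s})--(\ref{l/s}).

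First I would record some basic facts about $\ell$. By Definition \ref{weight} and (\ref{l/s}), the weight $\ell(s,t)$ is bounded above by a constant $C(T,\sigma)$ on the spatial domain, for each $t\in[0,T]$; moreover (\ref{l_s}) yields $|\ell_s|\le C$ everywhere (with $|\ell_s|\leq 1$ on $(0,x_0)$ since $\ell^2\geq s^2$ there). From the uniform boundedness of $\ell$, for every $0\le k\le\alpha-1$ one has
\begin{equation*}
\|u/\ell^k\|_{L^2}\leq C\|u/\ell^\alpha\|_{L^2},\qquad \|u_s/\ell^{k}\|_{L^2}\leq C\|u_s/\ell^{\alpha-1}\|_{L^2},
\end{equation*}
for $u\in\{\eta^m,\xi^m\}$. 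Combined with the energy bound (\ref{itenest}), this yields for a.e. $t\in[0,T]$
$$\|u/\ell^k\|_{L^2}^2+\|u_s/\ell^{k}\|_{L^2}^2\leq C\widetilde{C}\mathcal{E}_0.$$

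For the first estimate (\ref{pm}), set $v:=u/\ell^k$ with $u\in\{\eta^m,\xi^m\}$. Direct computation gives $v_s=u_s/\ell^k-k\,u\ell_s/\ell^{k+1}$, so that using the above together with $|\ell_s|\le C$ and $k+1\le\alpha$,
$$\|v_s\|_{L^2}\;\le\;\|u_s/\ell^k\|_{L^2}+Ck\,\|u/\ell^{k+1}\|_{L^2}\;\le\;C(1+k)\sqrt{\widetilde{C}\mathcal{E}_0}.$$
For $u=\xi^m$, the Dirichlet condition (\ref{itbdc}) forces $v$ to vanish at $s_{\min}$ (and, in case \textbf{G}, also at $s_0$), so the clean 1D estimate
$\|v\|_{L^\infty}^2\leq 2\|v\|_{L^2}\|v_s\|_{L^2}$
applies directly and gives the stated bound. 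For $u=\eta^m$, no boundary condition is imposed, but in case \textbf{HC} the integrability $\eta^m\in H^1$ together with $\ell\equiv 1$ for large $s$ forces $\eta^m\to 0$ at $+\infty$, while in case \textbf{G} the domain is bounded; in both situations the standard interpolation $\|v\|_{L^\infty}^2\leq C\|v\|_{L^2}(\|v\|_{L^2}+\|v_s\|_{L^2})$ (obtained by writing $v^2(s)=v^2(s_1)+2\int_{s_1}^s v v_{s'}ds'$ and averaging over $s_1$) yields the required $C\widetilde{C}(k+1)\mathcal{E}_0$ bound.

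For the pointwise estimate (\ref{pm_s}) on $\xi^m_s/\ell^k$, the key observation is that although no boundary condition is imposed on $\xi^m_s$, the vanishing of $\xi^m$ at $s_{\min}$ together with either $\xi^m(s_0,t)=0$ (case \textbf{G}) or the decay $\xi^m\to 0$ at $+\infty$ (case \textbf{HC}, by $H^1$-integrability) forces $\int \xi^m_s\,ds=0$. By smoothness (\ref{itsmooth}) and continuity there exists an interior point $s_1$ with $\xi^m_s(s_1)=0$, hence also $(\xi^m_s/\ell^k)(s_1)=0$. Applying $\|w\|_{L^\infty}^2\leq 2\|w\|_{L^2}\|w_s\|_{L^2}$ to $w:=\xi^m_s/\ell^k$, computing $w_s=\xi^m_{ss}/\ell^k-k\,\xi^m_s\,\ell_s/\ell^{k+1}$, and invoking once more the boundedness of $\ell,\ell_s$ together with $\|\xi^m_s/\ell^{\alpha-1}\|_{L^2}^2\leq 2\widetilde{C}\mathcal{E}_0$ to bound the factor $\|w\|_{L^2}$ by $C\sqrt{\widetilde{C}\mathcal{E}_0}$, produces precisely (\ref{pm_s}). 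The only mildly subtle step is locating a zero of $\xi^m_s$ so that the boundary-free $L^\infty$--$L^2$ Sobolev inequality can be used; once this is in place the remaining estimates are routine algebra.
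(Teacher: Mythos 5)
Your proof is correct and follows essentially the same strategy as the paper's: a one-dimensional $H^1\hookrightarrow L^\infty$ argument applied to the weighted functions, with a Rolle-type step to find a zero of $\xi^m_s$ for (\ref{pm_s}). The small difference is cosmetic. For (\ref{pm}) the paper does not invoke boundary conditions or decay at infinity; instead it reads the uniform energy bound (\ref{itenest}) as furnishing, for a.e.\ $t$, a single point $s_\star(t)$ at which both $|\eta^m/\ell^\alpha|^2$ and $|\xi^m/\ell^\alpha|^2$ are $O(\mathcal{E}_0)$, and then integrates $\partial_s|v|^2$ from $s_\star$ to $s$. You instead use the Dirichlet condition (\ref{itbdc}) for $\xi^m$ and decay at infinity (in case {\bf HC}) for $\eta^m$ to get a vanishing point. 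Both routes give the same result; the ``good point'' approach is slightly more uniform in that it treats $\eta^m$ and $\xi^m$ identically, while your approach makes more explicit where the boundary conditions and integrability are actually used. For (\ref{pm_s}) your argument (vanishing of $\xi^m$ at both ends via the boundary condition and decay/second boundary condition, hence a zero of $\xi^m_s$ by Rolle) is exactly the paper's, and you are right to flag the {\bf HC} case explicitly, where the paper invokes decay at $s=+\infty$ somewhat tacitly through the energy spaces.

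One remark worth bearing in mind (this applies equally to your proof and the paper's): in replacing $\|u/\ell^k\|_{L^2}$ by $\|u/\ell^\alpha\|_{L^2}$ one tacitly uses $\ell\le C_0$ with a constant $C_0$ that enters as $C_0^{\alpha-k}$; this is harmless here because $T,\sigma,\delta$ are chosen so that $\ell$ stays of order one on the relevant ranges, but it is the kind of constant-tracking that the generic symbol $C$ is hiding in both treatments.
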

\begin{proof}
For each of $\eta^m,\xi^m$, $m\in\mathbb{N}$, the energy estimate
(\ref{itenest}) furnishes a point $s_\star(t)$, such that
\begin{align*}
|\frac{\eta^m}{\ell^\alpha}(s_\star,t)|^2
+|\frac{\xi^m}{\ell^\alpha}(s_\star,t)|^2
\leq M\mathcal{E}_0,
\end{align*}
for some $M>0$ and a.e. $0\leq t\leq T$. Thus, for $\eta^m$ and a.e. $t$ we have
\begin{align*}
\big||\frac{\eta^m}{\ell^k}(s,t)|^2-|\frac{\eta^m}{\ell^k}(s_\star,t)|^2\big|=
\bigg|\int^s_{s_\star}\partial_s|\frac{\eta^m}{\ell^k}|^2ds\bigg|,
\end{align*}
thus
\begin{align*}
\tag{recall (\ref{l_s})}|\frac{\eta^m}{\ell^k}(s,t)|^2\leq&\;
\bigg|\int^s_{s_\star}2\frac{\eta^m}{\ell^k}\big(\frac{\eta^m_s}{\ell^k}-k\frac{\eta^m}{\ell^{k+1}}\partial_s\ell\big)ds\bigg|
+|\frac{\eta^m}{\ell^k}(s_\star,t)|^2\\
\leq&\;C\|\frac{\eta^m}{\ell^k}\|_{L^2}\big(\|\frac{\eta^m_s}{\ell^k}\|_{L^2}
+k\|\frac{\eta^m}{\ell^{k+1}}\|_{L^2}\big)+2M\mathcal{E}_0
\end{align*}
Employing (\ref{itenest}) once more and similarly for $\xi^m$
we obtain (\ref{pm}), provided $k\leq\alpha-1$

We treat $\xi^m_s$ in a different manner for a.e. $t\in[0,T]$, $m\in\mathbb{N}$.
By assumption (\ref{itbdc})
\begin{align}\label{smaxdecay}
\xi^m(s_{min},t)=0,\;{\bf HC}&& \big[\text{$\xi^m(s_0,t)=0$,\;{\bf G}}\big].
\end{align}
Thus, there exists a point $s_\star(t)\in(s_{min},s_{max}]$ such that $\xi^m_s(s_\star,t)=0$.
It follows that
\begin{align*}
|\frac{\xi^m_s}{\ell^k}(s,t)|^2=&
\bigg|\int^s_{s_\star}\partial_s|\frac{\xi^m_s}{\ell^k}|^2ds\bigg|
=\bigg|\int^s_{s_\star}2\frac{\xi^m_s}{\ell^k}
\big(\frac{\xi^m_{ss}}{\ell^k}-k\frac{\xi^m_s}{\ell^{k+1}}\partial_s\ell ds\bigg|\\
\leq&\;C\|\frac{\xi^m_s}{\ell^k}\|_{L^2}
\big(\|\frac{\xi^m_{ss}}{\ell^k}\|_{L^2}
+(k-1)\|\frac{\xi^m_s}{\ell^{k+1}}\|_{L^2}\big)\\
\tag{by (\ref{itenest})}
\leq&\;C\sqrt{\widetilde{C}\mathcal{E}_0}\big(\|\frac{\xi^m_{ss}}{\ell^{\alpha-1}}\|_{L^2}
+(k-1)\|\frac{\xi^m_s}{\ell^\alpha}\|_{L^2}\big),
\end{align*}
for every $k=0,\ldots,\alpha-1$.
\end{proof}
{\bf The iteration (\ref{itpde}) yields a contraction mapping:}

\begin{definition}\label{B,D}
We introduce generic notation
\begin{align*}
B,D
\end{align*}
to denote rational functions in $\eta^m,\xi^m$, $m=0,1,\ldots$,
satisfying the following conditions:
\begin{itemize}
\item The denomerators of $B,D$ have non-zero constant terms.
\item The constant term in the numerator of $B$ is non-zero,
whereas the one in the numerator of $D$ vanishes.
\end{itemize}
\end{definition}
\begin{lemma}\label{estB,D}
If $B,D$ are functions as above and
$\mathcal{E}_0$ is sufficiently small, then the following estimates
hold:
\begin{align}\label{pB,D}
0<c<\|B(s,t)\|_{L^\infty(s)}<C&&\|\frac{D}{\ell^k}\|^2_{L^\infty(s)}\leq C\widetilde{C}\mathcal{E}_0,
\end{align}
where $k=0,\ldots,\alpha-1$ and
\begin{align}\label{B_s,D_s}
\|\frac{B_s}{\ell^{\alpha-1}}\|^2_{L^2(s)}
+\|\frac{D_s}{\ell^{\alpha-1}}\|^2_{L^2(s)}\leq C\widetilde{C}\mathcal{E}_0,
\end{align}
for a.e. $0\leq t\leq T$ and appropriate positive constants $c,C$;
these depend on the coefficients of the rational functions $B,D$.
\end{lemma}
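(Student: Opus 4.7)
The plan is to deduce the bounds from Lemma \ref{lempm} together with the uniform energy bound \eqref{itenest}, using the structural hypotheses on $B$ and $D$ to encode the right power of $\ell$ in each factor.

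First, I would establish uniform $L^\infty$ smallness of the iterates. Taking $k=0$ in \eqref{pm}, I get $\|\eta^m\|_{L^\infty(s,t)}^2 + \|\xi^m\|_{L^\infty(s,t)}^2 \leq C\widetilde{C}\mathcal{E}_0$, so by choosing $\mathcal{E}_0$ small enough (depending only on $n$ and the coefficients of $B,D$), the values $\eta^m,\xi^m$ stay in a small neighborhood of $0$. Since the denominator of $B$ (and of $D$) has non-zero constant term, this guarantees the denominators are bounded away from zero uniformly in $(s,t,m)$, so $B$ and $D$ are well-defined rational functions evaluated in a compact set.

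For the pointwise estimates \eqref{pB,D}: because the numerator of $B$ has non-zero constant term $b_0\neq0$ and the arguments stay close to $0$, continuity gives $0<c<|B|<C$ uniformly. For $D$, the vanishing of the constant term in the numerator lets me factor $\eta^m$ or $\xi^m$ out of each monomial: schematically $D = \eta^m P_1(\eta^m,\xi^m) + \xi^m P_2(\eta^m,\xi^m)$ where the denominators are again bounded below. Dividing by $\ell^k$ and applying \eqref{pm} for $k\leq \alpha-1$ yields the claimed bound on $\|D/\ell^k\|_{L^\infty}^2$, with the rational prefactors being bounded in $L^\infty$ by the same smallness argument as above.

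For the $L^2$ estimates \eqref{B_s,D_s}: by the chain rule, $B_s = B_1\,\eta^m_s + B_2\,\xi^m_s$ and $D_s = D_1\,\eta^m_s + D_2\,\xi^m_s$, where $B_i,D_i$ are themselves rational functions in $\eta^m,\xi^m$ with bounded denominators. The $B_i,D_i$ are uniformly bounded in $L^\infty(s,t)$ by the argument above. Hence
\begin{align*}
\Bigl\|\frac{B_s}{\ell^{\alpha-1}}\Bigr\|_{L^2(s)}^2 + \Bigl\|\frac{D_s}{\ell^{\alpha-1}}\Bigr\|_{L^2(s)}^2 \leq C\Bigl(\Bigl\|\frac{\eta^m_s}{\ell^{\alpha-1}}\Bigr\|_{L^2(s)}^2 + \Bigl\|\frac{\xi^m_s}{\ell^{\alpha-1}}\Bigr\|_{L^2(s)}^2\Bigr),
\end{align*}
and each term on the right is controlled by $\|\eta^m\|_{H^1_\alpha}^2 + \|\xi^m\|_{H^1_\alpha}^2 \leq \mathcal{E}(\eta^m,\xi^m;T) \leq 2\widetilde{C}\mathcal{E}_0$ for a.e.\ $t$, by \eqref{itenest}.

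The only mildly delicate step is verifying that when $D$ is expanded as $\eta^m P_1 + \xi^m P_2$, the quotients $D/\ell^k$ are correctly distributed among the iterates so that only one factor of $1/\ell^k$ per term need be absorbed, rather than higher powers; this is automatic for a single monomial, and the general case follows by linearity. No genuine analytic obstacle arises here — everything is algebraic bookkeeping combined with the bounds of Lemma \ref{lempm}, provided $\mathcal{E}_0$ has been chosen small enough to keep the denominators away from zero.
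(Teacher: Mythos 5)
Your proposal is correct and follows exactly the same approach as the paper, which compresses the argument into the single sentence ``The proof is immediate from the pointwise estimates (\ref{pm}) and the energy estimate (\ref{itenest})''; you have simply spelled out the straightforward algebraic bookkeeping (denominators bounded below by $L^\infty$-smallness, $B$ bounded above and below via its nonzero constant numerator term, $D$ absorbing the $\ell^{-k}$ through the factored-out $\eta^m$ or $\xi^m$, and the chain rule plus \eqref{itenest} for the derivative bounds) that the paper leaves implicit.
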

We remark that we shall make use of the lower bound of $|B|$ only to confirm the parabolicity condition
(see below the second equation of (\ref{dpde})).
\begin{proof}
The proof is immediate from the pointwise estimates
(\ref{pm}) and the energy estimate (\ref{itenest}).
\end{proof}
Let
\begin{align}\label{deta,dxi}
d\eta^{m+1}=\eta^{m+1}-\eta^m,\;\;d\xi^{m+1}=\xi^{m+1}-\xi^m&&m=0,1,\ldots
\end{align}
Consider now the two  systems (\ref{itpde})
corresponding to the steps $m+1$ and $m$. Subtracting
these two systems, it is straightforward to check
that we arrive at a system:
\begin{align}\label{dpde}
\notag d\eta^{m+1}_t=&\;\frac{\psi_s^2}{\psi^2}Bd\xi^{m+1}
+\frac{\psi_s^2}{\psi^2}Dd\xi^m
+\frac{A(s,t)}{s}Bd\xi^m+\frac{\psi_s}{\psi}Bd\xi^{m+1}_s
+\frac{\psi_s}{\psi}B\xi^m_sd\xi^m\\
&+Bd\xi^m_s(\xi^m_s+\xi^{m-1}_s)
\notag+|\xi^{m-1}_s|^2d\xi^mB
+\frac{A(s,t)}{s}Dd\eta^{m+1}
+2n(n-1)\frac{\psi_s^2}{\psi^2}d\eta^{m+1}\\
d\xi^{m+1}_t=&\;(\frac{\psi_{ss}}{\psi}+(n-1)\frac{\psi_s^2}{\psi^2})
\big[Bd\eta^{m+1}+Bd\xi^{m+1}+Dd\xi^m+Dd\eta^m\big]\\
\notag&+\frac{A(s,t)}{s}Bd\xi^m+\frac{\psi_{s}}{\psi}Bd\xi^{m+1}_s
+\frac{\psi_{s}}{\psi}B\xi^m_s(d\eta^m+d\xi^m)
+|B|d\xi^{m+1}_{ss}\\
&\notag+\xi^m_{ss}(d\eta^m+d\xi^m)
+Bd\xi^m_s(\xi^m_s+\xi^{m-1}_s)
+|\xi^{m-1}_s|^2B(d\eta^m+d\xi^m)
+\frac{\psi_s}{\psi}Bd\eta^{m+1}_s\\
&+\frac{\psi_s}{\psi}\eta^m_sB(d\eta^m+d\xi^m)
\notag+B(\xi^m_sd\eta^m_s+\eta^{m-1}_sd\xi^m_s)
+\eta^{m-1}_s\xi^{m-1}_sB(d\eta^m+d\xi^m),
\end{align}
where we note that each $B,D$ appearing in the preceding system may differ from the other.
Also, by $|B|$ in the second equation of (\ref{dpde}) above
we denote a positive function in the $B$ class, having an $L^\infty(s)$ lower bound
$c=\frac{1}{2}$; one can readily check this from (\ref{itpde}), (\ref{itenest}),
provided the initial energy $\mathcal{E}_0$ is sufficiently small.
\newline

The main goal of this section is to show that the sequence of solutions $d\eta^{m+1}, d\xi^{m+1}$ to (\ref{dpde})
yield a Cauchy sequence in the energy spaces we have introduced. Specifically:

\begin{proposition}\label{contrmap}
There exist appropriately large parameters $\alpha,\sigma$ such that for sufficiently small $\mathcal{E}_0,T$
the following estimate holds (for a.e. $t\in[0,T]$):
\begin{align*}
&\big(\|d\eta^{m+1}\|^2_{H^1_\alpha}+\|d\xi^{m+1}\|^2_{H^1_\alpha}\big)(t)
+\frac{1}{2}\alpha\sigma\int^t_0\big(\|d\eta^{m+1}\|^2_{H^1_{\alpha+1}}+\|d\xi^{m+1}\|^2_{H^1_{\alpha+1}}\big)d\tau
+\frac{1}{2}\int^t_0\|\frac{d\xi^{m+1}_{ss}}{\ell^{\alpha-1}}\|^2_{L^2}d\tau\\
\leq&\int^t_0B_1(\tau)\big(\|d\eta^{m+1}\|^2_{H^1_\alpha}+\|d\xi^{m+1}\|^2_{H^1_\alpha}\big)d\tau
+\int^t_0B_2(\tau)\sqrt{\|d\eta^{m+1}\|^2_{H^1_\alpha}+\|d\xi^{m+1}\|^2_{H^1_\alpha}}d\tau\\
&+C\bigg(\sigma\int^T_0A^2(\tau)d\tau+\widetilde{C}\mathcal{E}_0\bigg)\mathcal{E}(d\eta^m,d\xi^m;T)
+C\sigma(\widetilde{C}\mathcal{E}_0)^2\sqrt{T}\mathcal{E}(d\eta^{m-1},d\xi^{m-1};T),
\end{align*}
where the functions $B_1(t),B_2(t)$, $t\in[0,T]$ are integrable, and $A^2(\tau)$ as defined in (\ref{intA(t)}).
\end{proposition}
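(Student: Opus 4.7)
The plan is to derive a combined weighted energy estimate by testing the first equation of (\ref{dpde}) against $d\eta^{m+1}/\ell^{2\alpha}$ and the second against $d\xi^{m+1}/\ell^{2\alpha}$, then testing the $s$-differentiated versions of the same equations against $d\eta^{m+1}_s/\ell^{2(\alpha-1)}$ and $d\xi^{m+1}_s/\ell^{2(\alpha-1)}$. Summing the four contributions after applying $\tfrac{d}{dt}$ reconstructs $\|d\eta^{m+1}\|_{H^1_\alpha}^2+\|d\xi^{m+1}\|_{H^1_\alpha}^2$ on the LHS. The required dissipation is generated by two mechanisms. First, by (\ref{l_t}) one has $\partial_t\ell\geq \sigma/\ell$ (modulo bounded errors) on $(0,x_0)$; differentiating $\int u^2/\ell^{2\alpha}\,ds$ in $t$ then produces a negative contribution bounded above by $-2\alpha\sigma\int u^2/\ell^{2\alpha+2}\,ds$, plus a bounded-error term coming from (\ref{partial_tds}), which furnishes the $\tfrac12\alpha\sigma\|\cdot\|^2_{H^1_{\alpha+1}}$ dissipation in the statement. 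Second, integration by parts on the principal parabolic term $|B|\,d\xi^{m+1}_{ss}$ in the $\xi$-equation, whose boundary contributions vanish by (\ref{itbdc}), produces $\int |B|\,|d\xi^{m+1}_s|^2/\ell^{2\alpha}\,ds$; the analogous integration by parts on the $s$-differentiated $\xi$-equation yields the $\tfrac12\int\|d\xi^{m+1}_{ss}/\ell^{\alpha-1}\|^2_{L^2}\,d\tau$ dissipation, after invoking the lower bound $|B|\geq 1/2$ from Lemma \ref{estB,D}.

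The RHS terms of (\ref{dpde}) are then classified by which difference they involve. Terms linear in $d\eta^{m+1},d\xi^{m+1}$ and their $s$-derivatives with coefficients of size $\sim 1/\ell^2$ -- coming from $\psi^2_s/\psi^2$ and $\psi_{ss}/\psi$, and also from the commutator (\ref{[s,t]}) produced when $\partial_s$ is brought inside $\partial_t$ at the $H^1$-level -- are absorbed into the $\alpha\sigma$-dissipation by Cauchy--Schwarz together with a weighted Hardy-type inequality controlling $\int u^2/\ell^{2\alpha+2}\,ds$ in terms of the $H^1_{\alpha+1}$ dissipation (valid precisely because $\ell^2\geq\sigma t>0$ for $t>0$, so the weight does not degenerate at $s=0$). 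This absorption is the crux of the argument, and it dictates the choice of $\alpha$ first, and then $\sigma$, both sufficiently large. Terms carrying the less singular coefficient $A(s,t)/s$ do not fit inside the dissipation, but instead generate factors of $A^2(\tau)$ which by (\ref{intA(t)}) are integrable in $t$ with small integral over $[0,T]$; these are collected into the $B_1(\tau)$ and $B_2(\tau)$ integrands of the statement, the $B_2$-integrand arising from mixed products in which only one factor involves the $(m{+}1)$-th iterate.

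The remaining terms involve the previous iterate differences $(d\eta^m,d\xi^m)$ or $(d\eta^{m-1},d\xi^{m-1})$. Their prefactors are products of $B,D$-functions, background coefficients, and iterate derivatives, all of which are pointwise controlled through Lemma \ref{estB,D} and the bounds (\ref{pm})--(\ref{pm_s}) of Lemma \ref{lempm}; Cauchy--Schwarz together with the uniform energy bound (\ref{itenest}) then yields the $C(\sigma\int_0^T A^2 + \widetilde C\mathcal{E}_0)\,\mathcal{E}(d\eta^m,d\xi^m;T)$ contribution. The genuinely quadratic iterate terms such as $\xi^m_{ss}(d\eta^m+d\xi^m)$ and $\eta^{m-1}_s\xi^{m-1}_s B(d\eta^m+d\xi^m)$ are handled by placing one iterate $s$-derivative in $L^\infty$ via (\ref{pm_s}) and the other in $L^2$; the time integration then produces the extra $\sqrt T$-factor giving the $C\sigma(\widetilde C\mathcal{E}_0)^2\sqrt T\,\mathcal{E}(d\eta^{m-1},d\xi^{m-1};T)$ term. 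The main obstacle is the bookkeeping of the many critically singular $1/\ell^2$-contributions and verifying that, after summing all of them (including those generated by $[\partial_s,\partial_t]$ and by $\partial_t\,ds$), the total coefficient can be made strictly smaller than $\alpha\sigma$. An additional subtlety is that the highest-order coupling terms $\tfrac12(\psi_s/\psi)B\,d\eta^{m+1}_s$ in the $\xi$-equation and $(\psi_s/\psi)B\,d\xi^{m+1}_s$ in the $\eta$-equation link the two unknowns at top order, so the $L^2_\alpha$ and $H^1_\alpha$ estimates for $d\eta^{m+1}$ and $d\xi^{m+1}$ must be carried out simultaneously, and a single Gronwall application in $t$ closes the inequality in the form stated.
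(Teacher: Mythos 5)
The overall framework you describe — testing the difference system against weighted copies of $d\eta^{m+1}$, $d\xi^{m+1}$ and their $s$-derivatives, extracting dissipation from $\partial_t\ell \sim \sigma/\ell$ via (\ref{l_t}) and from integration by parts on the principal term $|B|\,d\xi^{m+1}_{ss}$, and using Lemmas \ref{estB,D}, \ref{lempm} for the lower-order pieces — matches the paper's strategy. But your plan to ``sum the four contributions'' and close with a single absorption-plus-Gronwall step contains a genuine gap, and it is precisely the point the paper works hardest to circumvent. When you estimate the $s$-differentiated equations, terms such as $\partial_s(\psi_s^2/\psi^2)\,B\,d\xi^{m+1}$ have a coefficient of size $1/s^3$, so pairing with $d\eta^{m+1}_s/\ell^{2\alpha-2}$ and applying the comparison (\ref{l/s}) (note: this is a pointwise bound $\ell/s = O(\sqrt\sigma)$ on $(s_{\min},s_0)$, not a Hardy inequality) produces a contribution $\sim \sigma^2\|d\xi^{m+1}/\ell^{\alpha+1}\|^2_{L^2(s_{\min},s_0)}$; see (\ref{partial_sdeta_t}a) and the analogous terms before (\ref{L2dxi_s}). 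The only dissipation available for $\|d\xi^{m+1}/\ell^{\alpha+1}\|^2$ is the $\alpha\sigma$ generated at the $L^2_\alpha$ level. Absorbing $\sigma^2$ into $\alpha\sigma$ would force $\alpha\gtrsim\sigma$, but the absorption of the other critical $L^2$-level terms (cf. (\ref{sumdetadxi1})) has already imposed $\sigma\gtrsim\frac{C}{\varepsilon}\alpha$ with $\varepsilon$ small. These two constraints are incompatible, so the unified estimate cannot be closed by choosing $\alpha,\sigma$ large.

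The paper resolves this by a \emph{sequential} rather than simultaneous argument: it first closes the $L^2_\alpha$-level estimate on its own (this is possible only because the troublesome coupling term $\frac{\psi_s}{\psi}B\,d\xi^{m+1}d\eta^{m+1}_s$ is integrated by parts so that no $d\eta^{m+1}_s$-derivative is needed at that level; see the footnote preceding (\ref{sumdetadxi1})), applies Gronwall to obtain the a priori bound (\ref{L2L2detadxi}) on $\int_0^T\|d\eta^{m+1}/\ell^{\alpha+1}\|^2+\|d\xi^{m+1}/\ell^{\alpha+1}\|^2\,dt$, and only then passes to the $H^1$-level where the $\sigma^2$-critical terms in (\ref{sumdeta_sdxi_s2}) are controlled by invoking (\ref{L2L2detadxi}) rather than by absorption into the dissipation. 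Without this two-step structure the estimate does not close, so your proof as written does not go through; the remaining ingredients you list (classification of $dF^m_i$, use of (\ref{intA(t)}), the $\sqrt{T}$-gain from the quadratic iterate terms, and the $H_2$-type products giving the $B_2\sqrt{\cdot}$ integrand) are all accurate once the sequential scaffolding is in place.
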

In fact, we will prove part of this  estimate in \S\ref{L2detadxi}
(see (\ref{sumdetadxi2})) building up for the second half
in the end of \S\ref{L2deta_sdxi_s} (\ref{sumdeta_sdxi_s4}). We will show in the end of
this section that this proposition implies our claim, via  a standard Gronwall inequality.
\newline

Our goal for this and the next two subsections is to prove Proposition \ref{contrmap}.
Prior to embarking on the proof, we outline the main strategy
to deal with the singularities of the coefficients in the equations (\ref{dpde}).
The main issue is that while
we start with the energies $\|\cdot\|_{H^1_{\alpha}}$
for $d\xi^m,d\eta^m$, we are forced to control their energies in the norms
$\|\cdot\|_{H^1_{\alpha+1}}$ which involve  {\it more singular} weights; in particular
the new weights contain an extra factor of $\frac{1}{\ell^2}$, for each of the spaces $H^k_\alpha$  involved.
These appear in the LHS of the estimate in Proposition \ref{contrmap}.
All energies $\|\cdot\|_{H^1_{\alpha+1}}, \|\cdot\|_{L^2_{\alpha+1}}$ will be called {\it critical energies}, and the weights
$\frac{1}{\ell^{\alpha+1}}$ or $\frac{1}{\ell^\alpha}$ multiplying the functions
$d\xi^{K}, d\eta^{K}$, or $d\xi^{K}_s, d\eta^{K}_s$ {\it critical weights}.\footnote{$K=m+1,m$ or $m-1$ below.}
Thus, one theme of our analysis in this section
will involve splitting the {\it extra} powers of $1/s$ among the various terms in our
estimates in order to control the resulting critical energies as required by
Proposition \ref{contrmap}.\footnote{The estimate (\ref{l/s}) gives us bounds for $\frac{\ell}{s}$.}
\newline

We are going to need pointwise estimates for the differences.
\begin{lemma}\label{lemdpm}
For every $m\in\mathbb{N}$ and a.e. $t\in[0,T]$ the following $L^\infty$ estimates hold:
\begin{align}\label{pdxi}
\|\frac{d\xi^m}{\ell^k}\|_{L^\infty(s)}^2
\leq C\big(\|\frac{d\xi^m_s}{\ell^k}\|^2_{L^2}
+(k+1)\|\frac{d\xi^m}{\ell^{k+1}}\|^2_{L^2}\big),
\end{align}
\begin{align}\label{pdxi_s}
\|\frac{d\xi^m_s}{\ell^k}\|^2_{L^\infty(s)}\leq
C\big(\|\frac{d\xi^m_{ss}}{\ell^{\alpha-1}}\|^2_{L^2}
+(k+1)\|\frac{d\xi^m_s}{\ell^\alpha}\|^2_{L^2}\big)
\end{align}
and
\begin{align}\label{pdeta}
\|\frac{d\eta^m}{\ell^k}\|^2_{L^\infty(s)}\leq&\;
C\big(\|\frac{d\eta^m_s}{\ell^{\alpha-1}}\|^2_{L^2}+(k+1)\|\frac{d\eta^m}{\ell^\alpha}\|^2_{L^2}\big)\\
\notag&+CT\int^t_0\|d\xi^m\|^2_{H^2_{\alpha+1}}d\tau
+C\widetilde{C}\mathcal{E}_0\sqrt{T}\int^T_0\|d\xi^{m-1}\|^2_{H^2_{\alpha+1}}d\tau,
\end{align}
$k=0,\ldots,\alpha-1$. Further, in the {\bf HC} case we have the following decay at infinity
(for a.e. $t\in[0,T]$)
\begin{align}\label{smaxdecay}
\lim_{s\rightarrow+\infty}d\xi^m=0&&\lim_{s\rightarrow+\infty}d\xi^m_s=0,&&
m=0,1,\ldots
\end{align}
\end{lemma}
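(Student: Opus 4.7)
The bounds (\ref{pdxi}) and the decay at infinity (\ref{smaxdecay}) use the Dirichlet boundary condition (\ref{itbdc}), which $d\xi^m$ inherits from both iterates $\xi^m,\xi^{m+1}$. For (\ref{pdxi}), the plan is to exploit $d\xi^m(s_{min},t)=0$ (and $d\xi^m(s_0,t)=0$ in case {\bf G}) to integrate from the boundary,
$$\left(\frac{d\xi^m}{\ell^k}\right)^2(s,t) = \int_{s_{min}}^s 2\,\frac{d\xi^m}{\ell^k}\left(\frac{d\xi^m_s}{\ell^k} - k\,\frac{d\xi^m\,\partial_s\ell}{\ell^{k+1}}\right) ds,$$
and to apply Cauchy-Schwarz together with $|\partial_s\ell|\leq C$ from (\ref{l_s}), exactly as in the $\xi^m_s$ part of Lemma \ref{lempm}. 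For (\ref{smaxdecay}) in the {\bf HC} case, since $\ell$ is uniformly bounded above and below on $[x_0,+\infty)$ by (\ref{l/s}), both $d\xi^m$ and $d\xi^m_s$ lie in ordinary $H^1$ near infinity, and the standard one-dimensional Sobolev embedding forces their limits at $+\infty$ to be zero.

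For (\ref{pdxi_s}), I would locate an anchor $s_\star(t)$ at which $d\xi^m_s(s_\star,t)=0$. In the {\bf G} case, Rolle's theorem applied to $d\xi^m$ on $[s_{min},s_0]$ (where it vanishes at both endpoints) supplies such a point directly. In the {\bf HC} case, the decay just established combined with $d\xi^m(s_{min},t)=0$ forces $d\xi^m$ to attain its extremum at a finite interior point (the trivial case $d\xi^m\equiv 0$ being immediate), again producing a zero of $d\xi^m_s$. The usual FTC expansion from $s_\star$, Cauchy-Schwarz, and the bound $|\partial_s\ell|\leq C$ then yield (\ref{pdxi_s}), with weight mismatches absorbed using $k\leq\alpha-1$ and the uniform upper bound $\ell\leq C$ from (\ref{l/s}).

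The estimate (\ref{pdeta}) will be the main obstacle, since $d\eta^m$ carries no boundary condition in $s$. My plan is to combine a spatial FTC step with a time integration that exploits the shared initial data $d\eta^m(\cdot,0)=0$ imposed by (\ref{init}). First, select an anchor $x_\star$ at which $\ell(s(x_\star,t),t)$ is uniformly bounded from below in $t\in[0,T]$ (for instance $x_\star\in[x_0+1,+\infty)$ in {\bf HC}, where $\ell=1$ by (\ref{l}), or any fixed $x_\star\in(0,x_0)$ in {\bf G}, where $\ell^2\geq x_\star^2$). Then from $d\eta^m(x_\star,0)=0$ and Cauchy-Schwarz in time,
$$|d\eta^m(x_\star,t)|^2 \leq T\int_0^t |\partial_\tau d\eta^m(x_\star,\tau)|^2\,d\tau,$$
and I would estimate each summand in $\partial_\tau d\eta^m$, read off from (\ref{dpde}) after shifting indices down by one, pointwise at $x_\star$. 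Terms of type $(\psi_s/\psi)\,B\,d\xi^m_s$ and $\frac{\psi_s^2}{\psi^2}Bd\xi^m$ are controlled via the Sobolev embedding (\ref{pdxi_s})--(\ref{pdxi}) just proved for $d\xi^m$, contributing the $T\int_0^t\|d\xi^m\|^2_{H^2_{\alpha+1}}d\tau$ piece after the time Cauchy-Schwarz; bilinear terms such as $Bd\xi^{m-1}_s(\xi^{m-1}_s+\xi^{m-2}_s)$ and $|\xi^{m-2}_s|^2 d\xi^{m-1}$ are handled by pairing the iterate bound (\ref{pm_s}) on $\xi^{m-1}_s,\xi^{m-2}_s$ (which produces the factor $\sqrt{\widetilde{C}\mathcal{E}_0}$) with the Sobolev embedding applied to $d\xi^{m-1}$ and $d\xi^{m-1}_s$, yielding the $\sqrt{T}\,\widetilde{C}\mathcal{E}_0\int_0^T\|d\xi^{m-1}\|^2_{H^2_{\alpha+1}}d\tau$ piece; the residual terms in which $d\eta^m$ itself appears on the RHS are absorbed by the uniform lower bound on $\ell(s_\star,t)$ together with the $H^1_\alpha$ norm of $d\eta^m$ at time $t$. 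Finally, spatial FTC from $s_\star$ supplies the $\|d\eta^m_s/\ell^{\alpha-1}\|^2_{L^2}+(k+1)\|d\eta^m/\ell^\alpha\|^2_{L^2}$ portion of (\ref{pdeta}), with the weight powers reconciled using $k\leq\alpha-1$ and the uniform bound on $\ell$ from (\ref{l/s}). The main technical subtlety will be keeping careful track of the time-dependent weights along the anchor curve $s_\star(t)$ so that the division by $\ell^k$ at $s_\star$ does not spoil the estimate; this is precisely what the lower-bound choice of $x_\star$ above ensures.
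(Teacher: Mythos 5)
Your treatment of (\ref{pdxi}), (\ref{pdxi_s}), and (\ref{smaxdecay}) matches the paper's argument: integrate $\partial_s|d\xi^m/\ell^k|^2$ from $s_{\min}$ using the Dirichlet condition (\ref{itbdc}) for the first, locate a zero of $d\xi^m_s$ via Rolle / the decay at infinity for the second, and invoke the regularity class (\ref{itsp}) for the third. The choice of a spatial anchor where $\ell$ is bounded below and then integrating in time from the shared initial data to control $d\eta^m$ is also the paper's idea (there the anchor is $s_0=s(x_0,t)$).

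The gap is in the time-integration step for (\ref{pdeta}). The evolution equation for $d\eta^m$ at the fixed anchor has the form $\partial_t\, d\eta^m(x_\star,t)=c(t)\,d\eta^m(x_\star,t)+F(t)$ with $c$ bounded (because the anchor stays away from the singular locus) and $F$ built out of $d\xi^m$, $d\xi^{m-1}$ and the background iterates. If, as you propose, you apply Cauchy--Schwarz directly to $d\eta^m(x_\star,t)=\int_0^t\partial_\tau d\eta^m\,d\tau$ and then bound $|\partial_\tau d\eta^m|^2$ pointwise, the linear term produces $T\int_0^t|d\eta^m(x_\star,\tau)|^2\,d\tau$, which cannot be ``absorbed by the $H^1_\alpha$ norm of $d\eta^m$ at time $t$'': the integrand sits at earlier times $\tau$. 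Trading it for $T\int_0^t\|d\eta^m(\cdot,\tau)\|^2_{H^1_\alpha}d\tau$ gives a genuinely weaker statement than (\ref{pdeta}), whose right-hand side contains the $d\eta^m$ energies only at the single time $t$. The paper avoids this by first solving the scalar linear ODE at the anchor with an integrating factor (equivalently applying Gronwall), so that the boundedness of $c$ eliminates the $d\eta^m$ contribution before any square is taken; see the derivation of (\ref{pdetas_0}), where no $d\eta^m$ term survives. You should insert that Gronwall/integrating-factor step before the time Cauchy--Schwarz; without it the proof of (\ref{pdeta}) as stated does not close.
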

\begin{proof}
We recall the boundary condition (\ref{itbdc}) and (\ref{l_s}) to obtain
\begin{align*}
|\frac{d\xi^m}{\ell^k}(s,t)|^2=&
\int^s_{s_{min}}\partial_s|\frac{d\xi^m}{\ell^k}|^2ds
=\int^s_{s_{min}}2\frac{d\xi^m}{\ell^k}\big(\frac{d\xi^m_s}{\ell^k}
-k\frac{d\xi^m}{\ell^{k+1}}\partial_s\ell\big) ds\\
\leq&\; C\big(\|\frac{d\xi^m_s}{\ell^k}\|^2_{L^2}
+(k+1)\|\frac{d\xi^m}{\ell^{k+1}}\|^2_{L^2}\big)
\end{align*}
for all $m\in\mathbb{N}$ and a.e. $0\leq t\leq T$.
We treat $d\xi^m_s$ in a similar manner to (\ref{pm_s}). By
(\ref{itbdc}) $d\xi^m(s_{min},t)=0$ \big[$d\xi^m(s_0,t)=0$, {\bf G}\big]. The latter furnishes a point
$s_\star(t)\in(s_{min},s_{max}]$ such that $d\xi^m_s(s_\star,t)=0$, which implies as above
\begin{align*}
|\frac{d\xi^m_s}{\ell^k}(s,t)|^2\leq
C\big(\|\frac{d\xi^m_{ss}}{\ell^{\alpha-1}}\|^2_{L^2}
+(k+1)\|\frac{d\xi^m_s}{\ell^\alpha}\|^2_{L^2}\big),
\end{align*}
provided $k\leq\alpha$.
In order to obtain pointwise estimates for $d\eta^m$
we use the equation of $d\eta^m_t$, analogous to that of $d\eta^{m+1}_t$ (\ref{dpde}).
Setting $s=s_0:=s(x_0,t)$ we have
\begin{align*}
d\eta^m_t&-\big[\frac{A(s_0,t)}{s_0}D+2n(n-1)\frac{\psi_s^2}{\psi^2}\big]d\eta^m\bigg|_{(s_0,t)}
=\bigg[\frac{\psi_s^2}{\psi^2}Bd\xi^m
+\frac{\psi_s^2}{\psi^2}Dd\xi^{m-1}
+\frac{A(s_0,t)}{s_0}Bd\xi^{m-1}+\\
&+\frac{\psi_s}{\psi}Bd\xi^m_s
+\frac{\psi_s}{\psi}B\xi^{m-1}_sd\xi^{m-1}
+Bd\xi^{m-1}_s(\xi^{m-1}_s+\xi^{m-2}_s)
+|\xi^{m-2}_s|^2d\xi^{m-1}B\bigg]\bigg|_{(s_0,t)}
\end{align*}
Note that the coefficients of the preceding equation, at $s_0:=s(x_0,t)$, are bounded functions in $t$.
Whence, integrating with respect to time and taking into account the
pointwise estimates in Lemmas \ref{lempm}, \ref{estB,D}
along with the ones we just proved above we deduce ($\alpha\geq1$, a.e. $t\in[0,T]$)
\begin{align}\label{pdetas_0}
\notag|\frac{d\eta^m}{\ell^k}(s_0,t)|^2
\leq&\;CT\int^t_0\|d\xi^m\|^2_{H^2_{\alpha+1}}d\tau+CT\int^t_0\|d\xi^{m-1}\|^2_{H^1_{\alpha+1}}d\tau\\
\notag&+C\sqrt{\widetilde{C}\mathcal{E}_0}\int^t_0\big(\|\frac{\xi^{m-1}_{ss}}{\ell^{\alpha-1}}\|_{L^2}
+\|\frac{\xi^{m-2}_{ss}}{\ell^{\alpha-1}}\|_{L^2}\big)d\tau
\int^t_0\|d\xi^{m-1}\|^2_{H^2_{\alpha+1}}d\tau\\
\notag&+C\widetilde{C}\mathcal{E}_0T\int^t_0\|\frac{\xi^{m-2}_{ss}}{\ell^{\alpha-1}}\|^2_{L^2}dt
\int^t_0\|d\xi^{m-1}\|^2_{H^1_{\alpha+1}}d\tau\\
\overset{(\ref{itenest})}{\leq}&\;CT\int^t_0\|d\xi^m\|^2_{H^2_{\alpha+1}}d\tau
+C\widetilde{C}\mathcal{E}_0\sqrt{T}\int^t_0\|d\xi^{m-1}\|^2_{H^2_{\alpha+1}}d\tau.
\end{align}
Then integrating in the $s$-direction yields
\begin{align*}
|\frac{d\eta^m}{\ell^k}(s,t)|^2\leq&\;
\bigg|\int^{s_0}_s\partial_s|\frac{d\eta^m}{\ell^k}|^2ds\bigg|+|\frac{d\eta^m}{\ell^k}(s_0,t)|^2\\
\leq&\;C(\|\frac{d\eta^m_s}{\ell^{\alpha-1}}\|^2_{L^2}+(k+1)\|\frac{d\eta^m}{\ell^\alpha}\|^2_{L^2})
+CT\int^t_0\|d\xi^m\|^2_{H^2_{\alpha+1}}d\tau\\
&+C\widetilde{C}\mathcal{E}_0\sqrt{T}\int^T_0\|d\xi^{m-1}\|^2_{H^2_{\alpha+1}}d\tau,
\end{align*}
for every $m\in\mathbb{N}$, a.e. $t\in[0,T]$ and $k=0,\ldots,\alpha-1$, as required. The decay stated
in the end of the present lemma follows directly from (\ref{itsp}).
\end{proof}
Now, let
\begin{align}\label{dFm1}
dF^m_1:=&\;\frac{\psi_s^2}{\psi^2}Dd\xi^m+\frac{A}{s}Bd\xi^m
+\frac{\psi_s}{\psi}B\xi^m_sd\xi^m
+Bd\xi^m_s(\xi^m_s+\xi^{m-1}_s)+|\xi^{m-1}_s|^2d\xi^mB
\end{align}
and
\begin{align}\label{dFm2}
\notag dF^m_2:=&\;(\frac{\psi_{ss}}{\psi}+(n-1)\frac{\psi_s^2}{\psi^2})
(Dd\xi^m+Dd\eta^m)+\frac{A}{s}Bd\xi^m
+\frac{\psi_{s}}{\psi}B\xi^m_s(d\eta^m+d\xi^m)\\
&+\xi^m_{ss}(d\eta^m+d\xi^m)
+Bd\xi^m_s(\xi^m_s+\xi^{m-1}_s)
+|\xi^{m-1}_s|^2B(d\eta^m+d\xi^m)\\
&+\frac{\psi_s}{\psi}\eta^m_sB(d\eta^m+d\xi^m)
\notag+B(\xi^m_sd\eta^m_s+\eta^{m-1}_sd\xi^m_s)
+\eta^{m-1}_s\xi^{m-1}_sB(d\eta^m+d\xi^m)
\end{align}
In this notation, (\ref{dpde}) can be rewritten in the form
\begin{align}\label{dpde2}
\notag d\eta^{m+1}_t=&\;\frac{\psi_s^2}{\psi^2}Bd\xi^{m+1}
+\frac{\psi_s}{\psi}Bd\xi^{m+1}_s
+\frac{A}{s}Dd\eta^{m+1}
+2n(n-1)\frac{\psi_s^2}{\psi^2}d\eta^{m+1}+dF^m_1\\
d\xi^{m+1}_t=&\;(\frac{\psi_{ss}}{\psi}+(n-1)\frac{\psi_s^2}{\psi^2})
(Bd\eta^{m+1}+Bd\xi^{m+1})
+\frac{\psi_{s}}{\psi}Bd\xi^{m+1}_s
+|B|d\xi^{m+1}_{ss}\\
&\notag+\frac{\psi_s}{\psi}Bd\eta^{m+1}_s+dF^m_2
\end{align}
We establish some estimates for the functions $dF^m_1,dF^m_2$
that we will use in proving Proposition \ref{contrmap}.
\begin{lemma}\label{lemdFm}
For any function $u\in L^2(0,T;L^2_k(s))$,
$k=\alpha,\alpha+1$, $\alpha\geq2$, a.e. $t\in[0,T]$, and sufficiently small $\mathcal{E}_0,T$
the following estimates hold (for $k=\alpha+1$ in the first and third equation and $k=\alpha$ in the second one):
\begin{align}\label{intdFm1est}
\notag\int^t_0\int\frac{u\cdot dF^m_1}{\ell^{2\alpha}}dsd\tau
\leq&\;C\sigma\int^t_0\|\frac{u}{\ell^{\alpha+1}}\|^2_{L^2(s_{min},s_0)}d\tau
+C\int^t_0\|\frac{u}{\ell^\alpha}\|^2_{L^2}d\tau\\
&+C\bigg(\int^T_0A^2(\tau)d\tau+\widetilde{C}\mathcal{E}_0\sqrt{T}\bigg)\|d\xi^m\|^2_{L^\infty(0,T;H^1_\alpha)},
\end{align}
\begin{align}\label{intdFm1_sest}
\notag&\int^t_0\int\frac{u\cdot\partial_s(dF^m_1)}{\ell^{2\alpha-2}}dsd\tau
\leq C\sigma\int^t_0\|\frac{u}{\ell^\alpha}\|^2_{L^2(s_{min},s_0)}d\tau
+C\int^t_0\|\frac{u}{\ell^{\alpha-1}}\|^2_{L^2}d\tau\\
&+\frac{C}{\varepsilon_1}\sqrt{\widetilde{C}\mathcal{E}_0}\int^t_0
\big(\|\frac{\xi^m_{ss}}{\ell^{\alpha-1}}\|_{L^2}+\|\frac{\xi^{m-1}_{ss}}{\ell^{\alpha-1}}\|_{L^2}\big)
\|\frac{u}{\ell^{\alpha-1}}\|^2_{L^2}d\tau\\
\notag&+C\int^t_0\big(\|\frac{\xi^m_{ss}}{\ell^{\alpha-1}}\|_{L^2}+\|\frac{\xi^{m-1}_{ss}}{\ell^{\alpha-1}}\|_{L^2}\big)
\|d\xi^m\|_{H^2_{\alpha+1}}\|\frac{u}{\ell^{\alpha-1}}\|_{L^2}d\tau\\
\notag&+(C\widetilde{C}\mathcal{E}_0+\varepsilon_1)\int^T_0\|d\xi^m\|^2_{H^2_{\alpha+1}}d\tau
+C\bigg(\sigma\int^T_0A^2(\tau)d\tau+\widetilde{C}\mathcal{E}_0\bigg)\|d\xi^m\|^2_{L^\infty(0,T;H^1_\alpha)}
\end{align}
and
\begin{align}\label{intdFm2est}
\notag\int^t_0\int\frac{u\cdot dF^m_2}{\ell^{2\alpha}}dsd\tau\leq&\;
\tilde{\varepsilon}\int^t_0\|\frac{u}{\ell^{\alpha+1}}\|^2_{L^2}d\tau
+\frac{C}{\tilde{\varepsilon}}(\widetilde{C}\mathcal{E}_0)^2\sqrt{T}
\int^T_0\|d\xi^{m-1}\|^2_{H^2_{\alpha+1}}d\tau\\
&+\frac{C}{\tilde{\varepsilon}}\bigg(\sigma\int^T_0A^2(\tau)d\tau+\widetilde{C}\mathcal{E}_0\bigg)
\mathcal{E}(d\eta^m,d\xi^m;T),
\end{align}
where $C$ is a constant depending only on $n$.
\end{lemma}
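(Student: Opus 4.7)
The plan is to treat each of the terms in the definitions \eqref{dFm1} and \eqref{dFm2} separately, and to estimate each by Cauchy--Schwarz with a careful splitting of the weight $\ell^{-2\alpha}$ (or $\ell^{-2\alpha+2}$ for the derivative estimate) between the test function $u$ and the factor built from the iterates. The splitting is dictated by the strength of the background singularity of the coefficient. A factor $\psi_s^2/\psi^2 \sim 1/s^2$, which is bounded by $C/\ell^2$ via \eqref{coeff} and \eqref{l/s}, forces pairing $u$ with the critical weight $\ell^{-(\alpha+1)}$, producing the first $\sigma$-weighted term on the right-hand side of \eqref{intdFm1est}. A factor $A(s,t)/s$ allows absorption of the time-integration by invoking $\int_0^T A^2(\tau)\,d\tau = o(1)$ from \eqref{intA(t)}. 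Terms with bounded coefficients (such as those away from $x=0$ in the \textbf{HC} case) only produce the benign $\int_0^t\|u/\ell^\alpha\|_{L^2}^2\,d\tau$ contribution.

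For the iterate factor I will use the pointwise and $L^2$ bounds of Lemmas \ref{lempm} and \ref{estB,D}: each $D$-factor gains $\sqrt{\widetilde{C}\mathcal{E}_0}\cdot\ell^k$ in $L^\infty$, so its contribution is automatically small; each $B$-factor is $O(1)$ in $L^\infty$; and any derivative-squared term such as $\xi^m_s\, d\xi^m_s$ is handled via \eqref{pm_s}, which replaces an $L^\infty_s$ norm of $\xi^m_s$ by $\sqrt{\mathcal{E}_0}\bigl(\|\xi^m_{ss}/\ell^{\alpha-1}\|_{L^2}+k\|\xi^m_s/\ell^\alpha\|_{L^2}\bigr)$. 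After a further Cauchy--Schwarz in time, the terms involving $d\xi^m$ alone yield $\|d\xi^m\|_{L^\infty(0,T;H^1_\alpha)}^2$ multiplied by either $\widetilde{C}\mathcal{E}_0\sqrt{T}$ or $\int_0^T A^2(\tau)\,d\tau$, matching the stated right-hand side of \eqref{intdFm1est}.

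For the derivative estimate \eqref{intdFm1_sest} I will apply $\partial_s$ to each summand of $dF^m_1$ using the product rule. Derivatives falling on background coefficients are controlled by the asymptotic expansions \eqref{coeff_s}, which increase the power of $1/\ell$ by one but preserve the same structure as before; derivatives falling on $B$ or $D$ are controlled by the weighted $L^2$ bound \eqref{B_s,D_s}. The only genuinely new contributions arise when $\partial_s$ produces a $\xi^m_{ss}$ factor (for example, from differentiating $\xi^m_s\,d\xi^m_s$); these will be balanced via Young's inequality with parameter $\varepsilon_1$, giving the $\varepsilon_1\int_0^T\|d\xi^m\|_{H^2_{\alpha+1}}^2\,d\tau$ term together with the $C/\varepsilon_1$ prefactor on the mixed $L^2$ term involving both $\|\xi^m_{ss}/\ell^{\alpha-1}\|_{L^2}$ and $\|d\xi^m\|_{H^2_{\alpha+1}}$.

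The main obstacle will be the estimate \eqref{intdFm2est}, specifically handling the term $\xi^m_{ss}(d\eta^m + d\xi^m)$ appearing in $dF^m_2$. Since $d\eta^m$ satisfies no Dirichlet condition, the only pointwise control available is \eqref{pdeta}, whose right-hand side involves the time-integrated $H^2_{\alpha+1}$ norms of $d\xi^m$ and $d\xi^{m-1}$. Pairing this with $\|\xi^m_{ss}/\ell^{\alpha-1}\|_{L^2}$ (which is bounded by $\sqrt{\widetilde{C}\mathcal{E}_0}$ through \eqref{itenest}) and then integrating in $t\in[0,T]$ is precisely what produces the $(\widetilde{C}\mathcal{E}_0)^2\sqrt{T}\,\|d\xi^{m-1}\|_{L^2(0,T;H^2_{\alpha+1})}^2$ term on the right-hand side. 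Collecting all contributions and using the smallness of $\mathcal{E}_0$ and $T$ to keep constants uniformly controlled will yield the three stated estimates.
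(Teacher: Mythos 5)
Your proposal follows essentially the same route as the paper's proof: term-by-term Cauchy--Schwarz in $s$ with the weight $\ell^{-2\alpha}$ (resp. $\ell^{-2\alpha+2}$) split according to the singularity strength of each background coefficient, the comparison $\ell^2/s^2\le C\sigma$ on $(s_{\min},s_0)$ driving the $\sigma$-weighted critical term, the pointwise bounds of Lemmas \ref{lempm}, \ref{estB,D} (and, for the difference quantities, the analogues \eqref{pdxi}, \eqref{pdxi_s}, \eqref{pdeta} of Lemma \ref{lemdpm}), the product rule together with \eqref{coeff_s} and \eqref{B_s,D_s} for $\partial_s(dF^m_1)$ with a Young inequality in $\varepsilon_1$ for the new $\xi^m_{ss}$ factors, and the global Cauchy--Schwarz into $\tilde\varepsilon\|u/\ell^{\alpha+1}\|^2_{L^2}+\tilde\varepsilon^{-1}\|dF^m_2/\ell^{\alpha-1}\|^2_{L^2}$ for \eqref{intdFm2est}, with \eqref{pdeta} supplying the $(\widetilde C\mathcal{E}_0)^2\sqrt{T}\,\|d\xi^{m-1}\|^2_{L^2(0,T;H^2_{\alpha+1})}$ contribution. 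This is a correct plan and matches the paper's argument.
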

\begin{proof}
Recall the estimates on the coefficients
(\ref{coeff}), (\ref{coeffinfty}), (\ref{A(t)}).
Plugging (\ref{dFm1}) in the integral below
we obtain the following (for a.e. $t\in[0,T]$).
\begin{align}\label{dFm1est}
&\int\frac{u\cdot dF^m_1}{\ell^{2\alpha}}ds\\
\notag=&\int\frac{u}{\ell^{2\alpha}}\bigg[
\frac{\psi_s^2}{\psi^2}Dd\xi^m+\frac{A}{s}Bd\xi^m
+\frac{\psi_s}{\psi}B\xi^m_sd\xi^m
+Bd\xi^m_s(\xi^m_s+\xi^{m-1}_s)+|\xi^{m-1}_s|^2d\xi^mB
\bigg]ds\\
\tag{using the estimate (\ref{pB,D}) for the fraction $\frac{D}{\ell}$ and $B$}\leq&\;
\|\frac{u}{s\ell^\alpha}\|^2_{L^2}
+C\widetilde{C}\mathcal{E}_0\|\frac{d\xi^m}{s\ell^{\alpha-1}}\|^2_{L^2}\\
\tag{employ the $L^\infty$ estimate (\ref{pm_s}), $k=0$}
&+\|\frac{u}{s\ell^\alpha}\|^2_{L^2}
+CA^2(t)\|\frac{d\xi^m}{\ell^\alpha}\|^2_{L^2}
+\|\frac{u}{s\ell^\alpha}\|^2_{L^2}\\
&+C\sqrt{\widetilde{C}\mathcal{E}_0}\|\frac{\xi^m_{ss}}{\ell^{\alpha-1}}\|_{L^2}\|\frac{d\xi^m}{\ell^\alpha}\|^2_{L^2}
\notag+\|\frac{u}{s\ell^\alpha}\|^2_{L^2}
+C\sqrt{\widetilde{C}\mathcal{E}_0}\big(\|\frac{\xi^m_{ss}}{\ell^{\alpha-1}}\|_{L^2}
+\|\frac{\xi^{m-1}_{ss}}{\ell^{\alpha-1}}\|_{L^2}\big)
\|\frac{d\xi^m_s}{\ell^{\alpha-1}}\|^2_{L^2}\\
&+\|\frac{u}{s\ell^\alpha}\|^2_{L^2}
\notag+C\sqrt{\widetilde{C}\mathcal{E}_0}\|\frac{\xi^m_{ss}}{\ell^{\alpha-1}}\|_{L^2}
\|d\xi^m\|^2_{L^\infty(s)}\|\frac{\xi^{m-1}_s}{\ell^{\alpha-1}}\|^2_{L^2}\\
\tag{by (\ref{l/s}); see $^\star$ below}
\notag\leq&\;C\sigma
\|\frac{u}{\ell^{\alpha+1}}\|^2_{L^2(s_{min},s_0)}
+C\|\frac{u}{\ell^\alpha}\|^2_{L^2(s_0,+\infty)}
+C\widetilde{C}\mathcal{E}_0\sigma\|\frac{d\xi^m}{\ell^\alpha}\|^2_{L^2}\\
&\notag+C\big(A^2(t)+\sqrt{\widetilde{C}\mathcal{E}_0}\|\frac{\xi^m_{ss}}{\ell^{\alpha-1}}\|_{L^2}\big)
\|\frac{d\xi^m}{\ell^\alpha}\|^2_{L^2}
+C\sqrt{\widetilde{C}\mathcal{E}_0}\big(\|\frac{\xi^m_{ss}}{\ell^{\alpha-1}}\|_{L^2}
+\|\frac{\xi^{m-1}_{ss}}{\ell^{\alpha-1}}\|_{L^2}\big)
\|\frac{d\xi^m_s}{\ell^{\alpha-1}}\|^2_{L^2}\\
\tag{utilizing (\ref{itenest}) and (\ref{pdxi})}
&+C(\widetilde{C}\mathcal{E}_0)^{\frac{3}{2}}\|\frac{\xi^m_{ss}}{\ell^{\alpha-1}}\|_{L^2}
\big(\|\frac{d\xi^m}{\ell^\alpha}\|^2_{L^2}+\|\frac{d\xi^m_s}{\ell^{\alpha-1}}\|^2_{L^2}\big),
\end{align}
$^\star$since $\ell(s,t)=O(1)$ when $s\in(s_0,+\infty)$; recall Definition \ref{weight}
and the considerations on values of $s$  at the endpoints of integration (\ref{ints1}), (\ref{ints2}). Integrating
on $[0,t]$, $0\leq t\leq T$ ($\sqrt{T}<\frac{1}{\sigma}$) and utilizing (\ref{itenest}) once more, we derive (\ref{intdFm1est}),
provided $\widetilde{C}\mathcal{E}_0<1$.

We prove now the corresponding estimates for $\partial_s(dF^m_1)$
\begin{align}\label{partial_sdFm1}
\bullet\;\;\;\int\frac{u\cdot\partial_s(dF^m_1)}{\ell^{2\alpha-2}}ds
\end{align}
This time we plug in the RHS of (\ref{dFm1}) and
examine each generated term separately.
For the first three terms, we recall additionally the estimates
on the derivatives of the coefficients (\ref{coeff_s}), (\ref{coeff_sinfty})
and (\ref{A(t)}). Then for a.e. $t\in[0,T]$ we have
\begin{align*}
\tag{\ref{partial_sdFm1}a}
&\int\frac{u}{\ell^{2\alpha-2}}\partial_s\bigg[
\frac{\psi_s^2}{\psi^2}Dd\xi^m
\bigg]ds\\
=&\int\frac{u}{\ell^{2\alpha-2}}\bigg[
\partial_s(\frac{\psi_s^2}{\psi^2})Dd\xi^m
+\frac{\psi_s^2}{\psi^2}D_sd\xi^m
+\frac{\psi_s^2}{\psi^2}Dd\xi^m_s
\bigg]ds\\
\tag{by the pointwise estimate of $D$ (\ref{pB,D}), $k=1$}
\leq&\;\|\frac{u}{s\ell^{\alpha-1}}\|^2_{L^2}
+C\widetilde{C}\mathcal{E}_0\|\frac{d\xi^m}{s^2\ell^{\alpha-2}}\|^2_{L^2}\\
&+\|\frac{u}{s\ell^{\alpha-1}}\|^2_{L^2}+C\|\frac{d\xi^m}{s}\|^2_{L^\infty(s)}\cdot\|\frac{D_s}{\ell^{\alpha-1}}\|^2_{L^2}
+\|\frac{u}{s\ell^{\alpha-1}}\|^2_{L^2}+C\widetilde{C}\mathcal{E}_0\|\frac{d\xi^m_s}{s\ell^{\alpha-2}}\|^2_{L^2}\\
\tag{using (\ref{l/s}); recall (\ref{ints2}) and (\ref{l})}
\leq&\;C\sigma\|\frac{u}{\ell^\alpha}\|^2_{L^2(s_{min},s_0)}
+C\|\frac{u}{\ell^{\alpha-1}}\|^2_{L^2(s_0,+\infty)}\\
\tag{by (\ref{B_s,D_s}) and the $L^\infty$ (\ref{pdxi})}
&+C\widetilde{C}\mathcal{E}_0\sigma^2\|\frac{d\xi^m}{\ell^\alpha}\|^2_{L^2}
+C\widetilde{C}\mathcal{E}_0\sigma\|\frac{d\xi^m_s}{\ell^{\alpha-1}}\|^2_{L^2}
\end{align*}
Similarly, utilizing the estimates of $B$ (\ref{pB,D}), (\ref{B_s,D_s})
we obtain
\begin{align*}
\tag{\ref{partial_sdFm1}b}
&\int\frac{u}{\ell^{2\alpha-2}}\partial_s\bigg[\frac{A}{s}Bd\xi^m
\bigg]ds\\
\tag{now recall consideration (\ref{A(t)})}
=&\int\frac{u}{\ell^{2\alpha-2}}\bigg[
\partial_s(\frac{A}{s})Bd\xi^m
+\frac{A}{s}B_sd\xi^m
+\frac{A}{s}Bd\xi^m_s
\bigg]ds\\
\leq&\;\|\frac{u}{s\ell^{\alpha-1}}\|^2_{L^2}
+CA^2(t)\|\frac{d\xi^m}{s\ell^{\alpha-1}}\|^2_{L^2}
+\|\frac{u}{s\ell^{\alpha-1}}\|^2_{L^2}
+CA^2(t)\|d\xi^m\|^2_{L^\infty(s)}\cdot\|\frac{B_s}{\ell^{\alpha-1}}\|^2_{L^2}\\
&+\|\frac{u}{s\ell^{\alpha-1}}\|^2_{L^2}+CA^2(t)\|\frac{d\xi^m_s}{\ell^{\alpha-1}}\|^2_{L^2}\\
\tag{by (\ref{l/s})}
\leq&\;C\sigma\|\frac{u}{\ell^\alpha}\|^2_{L^2(s_{min},s_0)}
+C\|\frac{u}{\ell^{\alpha-1}}\|^2_{L^2(s_0,+\infty)}
+CA^2(t)\sigma\|\frac{d\xi^m}{\ell^\alpha}\|^2_{L^2}\\
\tag{invoking the $L^\infty$ estimate (\ref{pdxi})}
&+C\widetilde{C}\mathcal{E}_0A^2(t)\|d\xi^m\|^2_{H^1_\alpha}
+CA^2(t)\|\frac{d\xi^m_s}{\ell^{\alpha-1}}\|^2_{L^2}
\end{align*}
Next, we estimate
\begin{align*}
\tag{\ref{partial_sdFm1}c}
&\int\frac{u}{\ell^{2\alpha-2}}\partial_s\bigg[
\frac{\psi_s}{\psi}B\xi^m_sd\xi^m
\bigg]ds\\
=&\;\int\frac{u}{\ell^{2\alpha-2}}\bigg[
\partial_s(\frac{\psi_s}{\psi})B\xi^m_sd\xi^m
+\frac{\psi_s}{\psi}B_s\xi^m_sd\xi^m
+\frac{\psi_s}{\psi}B\xi^m_{ss}d\xi^m
+\frac{\psi_s}{\psi}B\xi^m_sd\xi^m_s
\bigg]ds\\
\tag{by (\ref{pm_s}), (\ref{pB,D})}
\leq&\;\|\frac{u}{s\ell^{\alpha-1}}\|^2_{L^2}
+C\sqrt{\widetilde{C}\mathcal{E}_0}\|\frac{\xi^m_{ss}}{\ell^{\alpha-1}}\|_{L^2}\|\frac{d\xi^m}{s\ell^{\alpha-1}}\|^2_{L^2}\\
&+\|\frac{u}{s\ell^{\alpha-1}}\|^2_{L^2}
+C\sqrt{\widetilde{C}\mathcal{E}_0}\|\frac{\xi^m_{ss}}{\ell^{\alpha-1}}\|_{L^2}
\|d\xi^m\|^2_{L^\infty(s)}\|\frac{B_s}{\ell^{\alpha-1}}\|^2_{L^2}\\
&+\|\frac{u}{s\ell^{\alpha-1}}\|^2_{L^2}
+C\|d\xi^m\|^2_{L^\infty(s)}\|\frac{\xi^m_{ss}}{\ell^{\alpha-1}}\|^2_{L^2}
+\|\frac{u}{s\ell^{\alpha-1}}\|^2_{L^2}
+C\sqrt{\widetilde{C}\mathcal{E}_0}\|\frac{\xi^m_{ss}}{\ell^{\alpha-1}}\|_{L^2}\|\frac{d\xi^m_s}{\ell^{\alpha-1}}\|^2_{L^2}\\
\tag{using (\ref{l/s})}
\leq&\;C\sigma\|\frac{u}{\ell^\alpha}\|^2_{L^2(s_{min},s_0)}
+C\|\frac{u}{\ell^{\alpha-1}}\|^2_{L^2(s_0,+\infty)}
+C\sqrt{\widetilde{C}\mathcal{E}_0}\sigma\|\frac{\xi^m_{ss}}{\ell^{\alpha-1}}\|_{L^2}\|\frac{d\xi^m}{\ell^\alpha}\|^2_{L^2}\\
\tag{from (\ref{B_s,D_s}) and the pointwise estimate (\ref{pdxi})}
&+C(\widetilde{C}\mathcal{E}_0)^{\frac{3}{2}}\|\frac{\xi^m_{ss}}{\ell^{\alpha-1}}\|_{L^2}\|d\xi^m\|^2_{H^1_\alpha}\\
&+C\|\frac{\xi^m_{ss}}{\ell^{\alpha-1}}\|^2_{L^2}\|d\xi^m\|^2_{H^1_\alpha}
+C\sqrt{\widetilde{C}\mathcal{E}_0}\|\frac{\xi^m_{ss}}{\ell^{\alpha-1}}\|_{L^2}\|\frac{d\xi^m_s}{\ell^{\alpha-1}}\|^2_{L^2}
\end{align*}
Employing again the pointwise estimate of $\xi^i_s$ (\ref{pm_s}), $i=m-1,m$, $k=0$,
and the properties of $B$ (\ref{pB,D}), (\ref{B_s,D_s}) we derive
\begin{align*}
\tag{\ref{partial_sdFm1}d}
&\int\frac{u}{\ell^{2\alpha-2}}\partial_s\bigg[
Bd\xi^m_s(\xi^m_s+\xi^{m-1}_s)+|\xi^{m-1}_s|^2d\xi^mB
\bigg]ds\\
=&\int\frac{u}{\ell^{2\alpha-2}}\bigg[
B_sd\xi^m_s(\xi^m_s+\xi^{m-1}_s)
+Bd\xi^m_{ss}(\xi^m_s+\xi^{m-1}_s)
+Bd\xi^m_s(\xi^m_{ss}+\xi^{m-1}_{ss})\\
&+2\xi^{m-1}_s\xi^{m-1}_{ss}d\xi^mB+|\xi^{m-1}_s|^2d\xi^m_sB
+|\xi^{m-1}_s|^2d\xi^mB_s
\bigg]ds\\
\leq&\;C\sqrt{\widetilde{C}\mathcal{E}_0}\big(\|\frac{\xi^m_{ss}}{\ell^{\alpha-1}}\|_{L^2}
+\|\frac{\xi^{m-1}_{ss}}{\ell^{\alpha-1}}\|_{L^2}\big)
\|\frac{u}{\ell^{\alpha-1}}\|^2_{L^2}
+\|d\xi^m_s\|^2_{L^\infty(s)}\|\frac{B_s}{\ell^{\alpha-1}}\|^2_{L^2}\\
&+\varepsilon_1\|\frac{d\xi^m_{ss}}{\ell^{\alpha-1}}\|^2_{L^2}
+\frac{C}{\varepsilon_1}\sqrt{\widetilde{C}\mathcal{E}_0}\big(\|\frac{\xi^m_{ss}}{\ell^{\alpha-1}}\|_{L^2}
+\|\frac{\xi^{m-1}_{ss}}{\ell^{\alpha-1}}\|_{L^2}\big)\|\frac{u}{\ell^{\alpha-1}}\|^2_{L^2}\\
&+\|\frac{u}{\ell^{\alpha-1}}\|_{L^2}\|d\xi^m_s\|_{L^\infty(s)}\big(\|\frac{\xi^m_{ss}}{\ell^{\alpha-1}}\|_{L^2}
+\|\frac{\xi^{m-1}_{ss}}{\ell^{\alpha-1}}\|_{L^2}\big)
+C\sqrt{\widetilde{C}\mathcal{E}_0}\|\frac{\xi^{m-1}_{ss}}{\ell^{\alpha-1}}\|_{L^2}\|\frac{u}{\ell^{\alpha-1}}\|^2_{L^2}\\
&+C\|d\xi^m\|^2_{L^\infty(s)}\|\frac{\xi^{m-1}_{ss}}{\ell^{\alpha-1}}\|^2_{L^2}
+C\sqrt{\widetilde{C}\mathcal{E}_0}\|\frac{\xi^{m-1}_{ss}}{\ell^{\alpha-1}}\|_{L^2}\|\frac{u}{\ell^{\alpha-1}}\|^2_{L^2}
+C\sqrt{\widetilde{C}\mathcal{E}_0}\|\frac{\xi^{m-1}_{ss}}{\ell^{\alpha-1}}\|_{L^2}\|\frac{d\xi^m_s}{\ell^{\alpha-1}}\|^2_{L^2}\\
&+C\sqrt{\widetilde{C}\mathcal{E}_0}\|\frac{\xi^{m-1}_{ss}}{\ell^{\alpha-1}}\|_{L^2}\|\frac{u}{\ell^{\alpha-1}}\|^2_{L^2}
+C\sqrt{\widetilde{C}\mathcal{E}_0}\|\frac{\xi^{m-1}_{ss}}{\ell^{\alpha-1}}\|_{L^2}
\|d\xi^m\|^2_{L^\infty(s)}\|\frac{B_s}{\ell^{\alpha-1}}\|^2_{L^2}\\
\tag{$0<\varepsilon_1<1$}
\leq&\;\frac{C}{\varepsilon_1}\sqrt{\widetilde{C}\mathcal{E}_0}\big(\|\frac{\xi^m_{ss}}{\ell^{\alpha-1}}\|_{L^2}
+\|\frac{\xi^{m-1}_{ss}}{\ell^{\alpha-1}}\|_{L^2}\big)\|\frac{u}{\ell^{\alpha-1}}\|^2_{L^2}
+(C\widetilde{C}\mathcal{E}_0+\varepsilon_1)\|d\xi^m\|^2_{H^2_{\alpha+1}}\\
\tag{by the $L^\infty$ estimate (\ref{pdxi_s})}
&+C\big(\|\frac{\xi^m_{ss}}{\ell^{\alpha-1}}\|_{L^2}
+\|\frac{\xi^{m-1}_{ss}}{\ell^{\alpha-1}}\|_{L^2}\big)
\|d\xi^m\|_{H^2_{\alpha+1}}\|\frac{u}{\ell^{\alpha-1}}\|_{L^2}\\
&+C\|\frac{\xi^{m-1}_{ss}}{\ell^{\alpha-1}}\|^2_{L^2}\|d\xi^m\|^2_{H^1_\alpha}
+C\sqrt{\widetilde{C}\mathcal{E}_0}\|\frac{\xi^{m-1}_{ss}}{\ell^{\alpha-1}}\|_{L^2}\|\frac{d\xi^m_s}{\ell^{\alpha-1}}\|^2_{L^2}
+C(\widetilde{C}\mathcal{E}_0)^{\frac{3}{2}}\|\frac{\xi^{m-1}_{ss}}{\ell^{\alpha-1}}\|_{L^2}
\|d\xi^m\|^2_{H^1_\alpha},
\end{align*}
applying (\ref{pdxi}) in the end. (We remark here that the control of the term $Bd\xi^m_s(\xi^m_{ss}+\xi^{m-1}_{ss})$ in the above is one
of the most delicate that we have to perform here, essentially due to
the fact that the energies only involve the first derivative of $\eta^m$;
this will be used in subsection \S\ref{L2deta_sdxi_s} below).\footnote{In fact
if this term in the equation had been slightly more nonlinear, our result would not hold.}

Combining (\ref{partial_sdFm1}a)-(\ref{partial_sdFm1}d), we achieve the following estimate
of the term (\ref{partial_sdFm1}) (for a.e. $t\in[0,T]$).
\begin{align}\label{partial_sdFm1est}
&\int\frac{u\cdot\partial_s(dF^m_1)}{\ell^{2\alpha-2}}ds\\
\notag\leq&\;C\sigma\|\frac{u}{\ell^\alpha}\|^2_{L^2(s_{min},s_0)}
+C\|\frac{u}{\ell^{\alpha-1}}\|^2_{L^2(s_0,+\infty)}
+\frac{C}{\varepsilon_1}\sqrt{\widetilde{C}\mathcal{E}_0}\big(\|\frac{\xi^m_{ss}}{\ell^{\alpha-1}}\|_{L^2}
+\|\frac{\xi^{m-1}_{ss}}{\ell^{\alpha-1}}\|_{L^2}\big)\|\frac{u}{\ell^{\alpha-1}}\|^2_{L^2}\\
&\notag+C\big(\|\frac{\xi^m_{ss}}{\ell^{\alpha-1}}\|_{L^2}
+\|\frac{\xi^{m-1}_{ss}}{\ell^{\alpha-1}}\|_{L^2}\big)
\|d\xi^m\|_{H^2_{\alpha+1}}\|\frac{u}{\ell^{\alpha-1}}\|_{L^2}
+(C\widetilde{C}\mathcal{E}_0+\varepsilon_1)\|d\xi^m\|^2_{H^2_{\alpha+1}}\\
&\notag+C\big(\widetilde{C}\mathcal{E}_0\sigma^2+A^2(t)\sigma
+\sqrt{\widetilde{C}\mathcal{E}_0}\sigma\|\frac{\xi^m_{ss}}{\ell^{\alpha-1}}\|_{L^2}\big)\|\frac{d\xi^m}{\ell^\alpha}\|^2_{L^2}\\
\notag&+C\bigg[\widetilde{C}\mathcal{E}_0\sigma+A^2(t)
+\sqrt{\widetilde{C}\mathcal{E}_0}\big(\|\frac{\xi^m_{ss}}{\ell^{\alpha-1}}\|_{L^2}
+\|\frac{\xi^{m-1}_{ss}}{\ell^{\alpha-1}}\|_{L^2}\big)\bigg]
\|\frac{d\xi^m_s}{\ell^{\alpha-1}}\|^2_{L^2}\\
&\notag+C\big(\|\frac{\xi^m_{ss}}{\ell^{\alpha-1}}\|^2_{L^2}
+\|\frac{\xi^{m-1}_{ss}}{\ell^{\alpha-1}}\|^2_{L^2}\big)\|d\xi^m\|^2_{H^1_\alpha}
+C(\widetilde{C}\mathcal{E}_0)^{\frac{3}{2}}\big(\|\frac{\xi^m_{ss}}{\ell^{\alpha-1}}\|_{L^2}
+\|\frac{\xi^{m-1}_{ss}}{\ell^{\alpha-1}}\|_{L^2}\big)\|d\xi^m\|^2_{H^1_\alpha}
\end{align}
Thus, integrating on $[0,t]$, $t\leq T<\frac{1}{\sigma^2}$, and using the energy estimate
(\ref{itenest}), $\widetilde{C}\mathcal{E}_0<1$, we obtain the desired bound (\ref{intdFm1_sest}).

Lastly, we derive estimates for $dF^m_2$ (\ref{dFm2}) in a similar manner
to (\ref{dFm1est}). Recall again the estimates on the coefficients
(\ref{coeff}), (\ref{coeffinfty}), (\ref{A(t)}).
\begin{align}\label{dFm2est}
&\int\frac{u\cdot dF^m_2}{\ell^{2\alpha}}ds
\leq\tilde{\varepsilon}\|\frac{u}{\ell^{\alpha+1}}\|^2_{L^2}
+\frac{1}{\tilde{\varepsilon}}\|\frac{dF^m_2}{\ell^{\alpha-1}}\|^2_{L^2}\\
\tag{applying (\ref{pB,D}); for $\frac{D}{\ell}$ especially}
\leq&\;\tilde{\varepsilon}\|\frac{u}{\ell^{\alpha+1}}\|^2_{L^2}
+\frac{C}{\tilde{\varepsilon}}\widetilde{C}\mathcal{E}_0\big(\|\frac{d\xi^m}{s^2\ell^{\alpha-2}}\|^2_{L^2}
+\|\frac{d\eta^m}{s^2\ell^{\alpha-2}}\|^2_{L^2}\big)\\
\tag{by (\ref{pm_s})}
&+\frac{C}{\tilde{\varepsilon}}A^2(t)\|\frac{d\xi^m}{s\ell^{\alpha-1}}\|^2_{L^2}
+\frac{C}{\tilde{\varepsilon}}\sqrt{\widetilde{C}\mathcal{E}_0}
\|\frac{\xi^m_{ss}}{\ell^{\alpha-1}}\|_{L^2}
\big(\|\frac{d\eta^m}{s\ell^{\alpha-1}}\|^2_{L^2}
+\|\frac{d\xi^m}{s\ell^{\alpha-1}}\|^2_{L^2}\big)\\
\notag&+\frac{C}{\tilde{\varepsilon}}\big(\|d\eta^m\|^2_{L^\infty(s)}+\|d\xi^m\|^2_{L^\infty(s)}\big)
\|\frac{\xi^m_{ss}}{\ell^{\alpha-1}}\|^2_{L^2}
+\frac{C}{\tilde{\varepsilon}}\sqrt{\widetilde{C}\mathcal{E}_0}\big(\|\frac{\xi^m_{ss}}{\ell^{\alpha-1}}\|_{L^2}
+\|\frac{\xi^{m-1}_{ss}}{\ell^{\alpha-1}}\|_{L^2}\big)\|\frac{d\xi^m_s}{\ell^{\alpha-1}}\|^2_{L^2}\\
&\notag+\frac{C}{\tilde{\varepsilon}}\sqrt{\widetilde{C}\mathcal{E}_0}\|\frac{\xi^{m-1}_{ss}}{\ell^{\alpha-1}}\|_{L^2}
\big(\|d\eta^m\|^2_{L^\infty(s)}+\|d\xi^m\|^2_{L^\infty(s)}\big)\cdot\|\frac{\xi^{m-1}_s}{\ell^{\alpha-1}}\|^2_{L^2}\\
&+\frac{C}{\tilde{\varepsilon}}
\big(\|\frac{d\eta^m}{s}\|^2_{L^\infty(s)}+\|\frac{d\xi^m}{s}\|^2_{L^\infty(s)}\big)
\|\frac{\eta^m_s}{\ell^{\alpha-1}}\|^2_{L^2}
\notag+\frac{C}{\tilde{\varepsilon}}\sqrt{\widetilde{C}\mathcal{E}_0}\|\frac{\xi^m_{ss}}{\ell^{\alpha-1}}\|_{L^2}
\|\frac{d\eta^m_s}{\ell^{\alpha-1}}\|^2_{L^2}\\
&+\frac{C}{\tilde{\varepsilon}}\|d\xi^m_s\|^2_{L^\infty(s)}\|\frac{\eta^{m-1}_s}{\ell^{\alpha-1}}\|^2_{L^2}
\notag+\frac{C}{\tilde{\varepsilon}}\sqrt{\widetilde{C}\mathcal{E}_0}
\|\frac{\xi^{m-1}_{ss}}{\ell^{\alpha-1}}\|_{L^2}
\big(\|d\eta^m\|^2_{L^\infty(s)}+\|d\xi^m\|^2_{L^\infty(s)}\big)\|\frac{\eta^{m-1}_s}{\ell^{\alpha-1}}\|^2_{L^2}\\
\tag{by (\ref{l/s})}
\notag\leq&\;\tilde{\varepsilon}\|\frac{u}{\ell^{\alpha+1}}\|^2_{L^2}
+\frac{C}{\tilde{\varepsilon}}\widetilde{C}\mathcal{E}_0\sigma^2
\big(\|\frac{d\eta^m}{\ell^\alpha}\|^2_{L^2}
+\|\frac{d\xi^m}{\ell^\alpha}\|^2_{L^2}\big)
+\frac{C}{\tilde{\varepsilon}}\sigma A^2(t)\|\frac{d\xi^m}{\ell^\alpha}\|^2_{L^2}\\
\notag&+\frac{C}{\tilde{\varepsilon}}\sqrt{\widetilde{C}\mathcal{E}_0}
\big(\|\frac{\xi^m_{ss}}{\ell^{\alpha-1}}\|_{L^2}
+\|\frac{\xi^{m-1}_{ss}}{\ell^{\alpha-1}}\|_{L^2}\big)
\big(\sigma\|\frac{d\eta^m}{\ell^\alpha}\|^2_{L^2}
+\sigma\|\frac{d\xi^m}{\ell^\alpha}\|^2_{L^2}
+\|\frac{d\xi^m_s}{\ell^{\alpha-1}}\|^2_{L^2}
+\|\frac{d\eta^m_s}{\ell^{\alpha-1}}\|^2_{L^2}\big)\\
\notag&+\frac{C}{\tilde{\varepsilon}}\big(\|\frac{\xi^m_{ss}}{\ell^{\alpha-1}}\|^2_{L^2}
+(\widetilde{C}\mathcal{E}_0)^{\frac{3}{2}}\|\frac{\xi^{m-1}_{ss}}{\ell^{\alpha-1}}\|_{L^2}
+\sigma\widetilde{C}\mathcal{E}_0\big)
\bigg[\|d\eta^m\|^2_{H^1_\alpha}+\|d\xi^m\|^2_{H^1_\alpha}\\
\notag&+T\int^T_0\|d\xi^m\|^2_{H^2_{\alpha+1}}dt
+\widetilde{C}\mathcal{E}_0\sqrt{T}\int^T_0\|d\xi^{m-1}\|^2_{H^2_{\alpha+1}}dt
\bigg]
+\frac{C}{\tilde{\varepsilon}}\widetilde{C}\mathcal{E}_0\big(\|\frac{d\xi^m_{ss}}{\ell^{\alpha-1}}\|^2_{L^2}
+\|\frac{d\xi^m_s}{\ell^\alpha}\|^2_{L^2}\big),
\end{align}
employing the pointwise estimates (\ref{pdxi}), (\ref{pdxi_s}), (\ref{pdeta}). Integrating
up to time $0<t\leq T<\frac{1}{\sigma^2}$ and using the energy estimate (\ref{itenest}),
$\widetilde{C}\mathcal{E}_0<1$,
we arrive at (\ref{intdFm2est}). This completes the proof the lemma.
\end{proof}

\subsection{The $L^2_\alpha$ estimates for $d\eta^{m+1},d\xi^{m+1}$}\label{L2detadxi}
\noindent

The main energy estimates that will prove Proposition \ref{contrmap} are obtained in the present
subsection and the subsequent one. These are designed to give two Gronwall-type inequalities
which together (and combined with Lemma \ref{lemdFm})
imply Proposition \ref{contrmap}). Since the reasoning behind the estimates
that we are about to derive now and in \S\ref{L2deta_sdxi_s}
is quite similar, we discuss here the main objective that guides our analysis.
As usual, we consider the time-derivatives of the
$L^2_\alpha$ norms of $d\eta^{m+1},d\xi^{m+1}$ and seek to derive  a bound
\begin{align*}
\partial_t\big(\|d\eta^{m+1}\|_{L^2_\alpha}^2+\|d\xi^{m+1}\|_{L^2_\alpha}^2\big)
\le B(t)\big[\|d\eta^{m+1}\|_{L^2_\alpha}^2+ \|d\xi^{m+1}\|_{L^2_\alpha}^2\big]
+F(d\eta^m,d\xi^m,d\eta^m_s,...)
\end{align*}
where $B(t)$ is integrable in time.
It is precisely at this point that the most singular
terms\footnote{These are the terms $d\eta^{m+1}, d\xi^{m+1}$ and
$d\eta_s^{m+1}, d\xi_s^{m+1}$ multiplied by a coefficient
behaving like $\frac{1}{s^2}$ and $\frac{1}{s}$ (respectively) as $x,t\rightarrow0^+$.}
in the system (\ref{itpde}) are very dangerous.
A priori, calculating the time derivatives of $d\eta^{m+1}$, $d\xi^{m+1}$
would give us an estimate as above with $B(t)$ behaving like $1/t$.
It is exactly for this reason that we modified the iteration procedure (\ref{itpde})
to include  the unknown functions $d\eta^{m+1}, d\xi^{m+1}$ in specific lower-order terms of the RHSs.
As importantly, it is for this reason that the weight function $\ell$ was introduced.
The correct choice of weight function is crucial at this step,
yielding us an inequality of the form:
\begin{align*}
\partial_t\big(\|d\eta^{m+1}\|_{L^2_\alpha}^2+\|d\xi^{m+1}\|_{L^2_\alpha}^2\big)
\le&\;B(t)\big[\|d\eta^{m+1}\|_{L^2_\alpha}^2+\|d\xi^{m+1}\|_{L^2_\alpha}^2\big]
+F(d\eta^m,d\xi^m,d\eta^m_s,...)\\
&+N\big[\|d\eta^{m+1}\|_{L^2_{\alpha+1}}^2+
\|d\xi^{m+1}\|_{L^2_{\alpha+1}}^2+\|d\xi^{m+1}_s\|^2_{L^2_\alpha}\big],
\end{align*}
whereas now $B(t)$ is integrable and $N$ is {\it negative}, allowing hence the estimate to close.
This negative term we obtain here will be essential in closing the estimates for
Proposition \ref{contrmap} in the next subsection.
\newline

We commence with the $L^2_\alpha$ estimates from $d\eta^{m+1}$. Taking the time derivative
of the $L^2_\alpha$ norm of $d\eta^{m+1}$ and
using (\ref{l_t}), (\ref{partial_tds}) we derive (for a.e. $0\leq t\leq T$)
\begin{align}\label{L2deta}
\notag\frac{1}{2}\frac{d}{dt}\|d\eta^{m+1}\|^2_{L^2_\alpha}=&
\int\frac{d\eta^{m+1}d\eta^{m+1}_t}{\ell^{2\alpha}}ds
-\alpha\int\frac{|d\eta^{m+1}|^2}{\ell^{2\alpha+1}}\partial_t\ell ds
+\frac{1}{2}\int\frac{|d\eta^{m+1}|^2}{\ell^{2\alpha}}\partial_tds\\
\leq&\int\frac{d\eta^{m+1}d\eta^{m+1}_t}{\ell^{2\alpha}}ds
-\alpha\sigma\|\frac{d\eta^{m+1}}{\ell^{\alpha+1}}\|^2_{L^2(s_{min},s_0)}\\
\notag&+C\alpha\|\frac{d\eta^{m+1}}{\ell^{\alpha+1}}\|^2_{L^2(s_{min},s_0)}
+C\alpha\|\frac{d\eta^{m+1}}{\ell^\alpha}\|^2_{L^2}
+C\|\frac{d\eta^{m+1}}{s\ell^\alpha}\|^2_{L^2}
\end{align}
We estimate the last term using the comparison estimate (\ref{l/s})
\begin{align*}
\|\frac{d\eta^{m+1}}{s\ell^\alpha}\|^2_{L^2}\leq
C\sigma\|\frac{d\eta^{m+1}}{\ell^{\alpha+1}}\|^2_{L^2(s_{min,s_0})}
+C\|\frac{d\eta^{m+1}}{\ell^\alpha}\|^2_{L^2(s_0,+\infty)},
\end{align*}
since $\ell=O(1)$ in $(s_0,+\infty)$, {\bf HC}; see (\ref{l}) and (\ref{ints2}).
Using the estimates of the coefficients
(\ref{coeff}), (\ref{coeffinfty}), (\ref{A(t)}) we have
\begin{align*}
&\tag{plugging in the RHS of $d\eta^{m+1}_t$ (\ref{dpde2})}
\int\frac{d\eta^{m+1}d\eta^{m+1}_t}{\ell^{2\alpha}}ds\\
=&\int\frac{d\eta^{m+1}}{\ell^{2\alpha}}\bigg[
\frac{\psi_s^2}{\psi^2}Bd\xi^{m+1}
+\frac{\psi_s}{\psi}Bd\xi^{m+1}_s
+\frac{A}{s}Dd\eta^{m+1}
+2n(n-1)\frac{\psi_s^2}{\psi^2}d\eta^{m+1}+dF^m_1
\bigg]ds\\
\leq&\;\|\frac{d\eta^{m+1}}{s\ell^\alpha}\|^2_{L^2}
+C\|\frac{d\xi^{m+1}}{s\ell^\alpha}\|^2_{L^2}
+\varepsilon\|\frac{d\xi^{m+1}_s}{\ell^\alpha}\|^2_{L^2}
+\frac{C}{\varepsilon}\|\frac{d\eta^{m+1}}{s\ell^\alpha}\|^2_{L^2}\\
&+\|\frac{d\eta^{m+1}}{s\ell^\alpha}\|^2_{L^2}
+C\widetilde{C}\mathcal{E}_0A^2(t)\|\frac{d\eta^{m+1}}{\ell^\alpha}\|^2_{L^2}
\tag{by the $L^\infty$ estimates of $B,D$ (\ref{pB,D})}\\
&+C\|\frac{d\eta^{m+1}}{s\ell^\alpha}\|^2_{L^2}
+\int\frac{d\eta^{m+1}dF^m_1}{\ell^{2\alpha}}ds\\
\tag{employing (\ref{l/s}), $0<\varepsilon<1$}
\leq&\;\frac{C}{\varepsilon}\sigma\|\frac{d\eta^{m+1}}{\ell^{\alpha+1}}\|^2_{L^2(s_{min},s_0)}
+\frac{C}{\varepsilon}\|\frac{d\eta^{m+1}}{\ell^\alpha}\|^2_{L^2(s_0,+\infty)}\\
&+C\sigma\|\frac{d\xi^{m+1}}{\ell^{\alpha+1}}\|^2_{L^2(s_{min},s_0)}
+C\|\frac{d\xi^{m+1}}{\ell^\alpha}\|^2_{L^2(s_0,+\infty)}
+\varepsilon\|\frac{d\xi^{m+1}_s}{\ell^\alpha}\|^2_{L^2}\\
&+C\widetilde{C}\mathcal{E}_0A^2(t)\|\frac{d\eta^{m+1}}{\ell^\alpha}\|^2_{L^2}
+\int\frac{d\eta^{m+1}dF^m_1}{\ell^{2\alpha}}ds
\end{align*}
We next consider $\frac{d}{dt}\|d\xi^{m+1}\|^2_{L^2_\alpha}$. Arguing similarly to (\ref{L2deta}), we deduce
\begin{align}\label{L2dxi}
\frac{1}{2}\frac{d}{dt}\|d\xi^{m+1}\|^2_{L^2_\alpha}\leq&
\int\frac{d\xi^{m+1}d\xi^{m+1}_t}{\ell^{2\alpha}}ds
-\alpha\sigma\|\frac{d\xi^{m+1}}{\ell^{\alpha+1}}\|^2_{L^2(s_{min},s_0)}\\
\notag&+C\alpha\|\frac{d\xi^{m+1}}{\ell^{\alpha+1}}\|^2_{L^2(s_{min},s_0)}
+C\alpha\|\frac{d\xi^{m+1}}{\ell^\alpha}\|^2_{L^2}
+C\|\frac{d\xi^{m+1}}{s\ell^\alpha}\|^2_{L^2}
\end{align}
Again the last term is controlled by (\ref{l/s}),
so we need only estimate
\begin{align*}
\tag{plugging in the RHS of $d\xi^{m+1}_t$ (\ref{dpde2})}
&\int\frac{d\xi^{m+1}d\xi^{m+1}_t}{\ell^{2\alpha}}ds\\
=&\int\frac{d\xi^{m+1}}{\ell^{2\alpha}}\bigg[
(\frac{\psi_{ss}}{\psi}+(n-1)\frac{\psi_s^2}{\psi^2})
(Bd\eta^{m+1}+Bd\xi^{m+1})
+\frac{\psi_{s}}{\psi}Bd\xi^{m+1}_s\\
\tag{recall (\ref{coeff}), (\ref{coeffinfty})}
&+|B|d\xi^{m+1}_{ss}
+\frac{\psi_s}{\psi}Bd\eta^{m+1}_s
+dF^m_2\bigg]ds\\
\tag{by (\ref{pB,D}) for $B$}
\leq&\;\|\frac{d\xi^{m+1}}{s\ell^\alpha}\|^2_{L^2}
+C\|\frac{d\eta^{m+1}}{s\ell^\alpha}\|^2_{L^2}
+C\|\frac{d\xi^{m+1}}{s\ell^\alpha}\|^2_{L^2}
+\varepsilon\|\frac{d\xi^{m+1}_s}{\ell^\alpha}\|^2_{L^2}\\
&+\frac{C}{\varepsilon}\|\frac{d\xi^{m+1}}{s\ell^\alpha}\|^2_{L^2}
+\int\frac{|B|d\xi^{m+1}d\xi^{m+1}_{ss}}{\ell^{2\alpha}}ds
+\int\frac{\psi_s}{\psi}\frac{Bd\xi^{m+1}d\eta^{m+1}_s}{\ell^{2\alpha}}ds
+\int\frac{d\xi^{m+1}dF^m_2}{\ell^{2\alpha}}ds\\
\tag{using (\ref{l/s})}
\leq&\;\frac{C}{\varepsilon}\sigma\|\frac{d\xi^{m+1}}{\ell^{\alpha+1}}\|^2_{L^2(s_{min},s_0)}
+\frac{C}{\varepsilon}\|\frac{d\xi^{m+1}}{\ell^\alpha}\|^2_{L^2(s_0,+\infty)}
+\varepsilon\|\frac{d\xi^{m+1}_s}{\ell^\alpha}\|^2_{L^2}\\
&+C\sigma\|\frac{d\eta^{m+1}}{\ell^{\alpha+1}}\|^2_{L^2(s_{min},s_0)}
+C\|\frac{d\eta^{m+1}}{\ell^\alpha}\|^2_{L^2(s_0,+\infty)}
+\int\frac{d\xi^{m+1}dF^m_2}{\ell^{2\alpha}}ds\\
&+\int\frac{|B|d\xi^{m+1}d\xi^{m+1}_{ss}}{\ell^{2\alpha}}ds
+\int\frac{\psi_s}{\psi}\frac{Bd\xi^{m+1}d\eta^{m+1}_s}{\ell^{2\alpha}}ds
\end{align*}
We treat the last two terms separately.
First, integrating by parts and using the boundary condition (\ref{itbdc}) and (\ref{smaxdecay}) we derive
\begin{align*}
&\int\frac{|B|d\xi^{m+1}d\xi^{m+1}_{ss}}{\ell^{2\alpha}}ds\\
=&-\int|B|\frac{|d\xi^{m+1}_s|^2}{\ell^{2\alpha}}ds
-\int\frac{B_sd\xi^{m+1}d\xi^{m+1}_s}{\ell^{2\alpha}}ds
+2\alpha\int\frac{|B|d\xi^{m+1}d\xi^{m+1}_s}{\ell^{2\alpha+1}}\partial_s\ell ds\\
\tag{see (\ref{pB,D}), $c=\frac{1}{2}$}
\leq&-\frac{1}{2}\|\frac{d\xi^{m+1}_s}{\ell^\alpha}\|^2_{L^2}
+\varepsilon\|\frac{d\xi^{m+1}_s}{\ell^\alpha}\|^2_{L^2}
+\frac{C}{\varepsilon}\|\frac{d\xi^{m+1}}{\ell}\|^2_{L^\infty(s)}\|\frac{B_s}{\ell^{\alpha-1}}\|^2_{L^2}\\
\tag{recall (\ref{l_s})}
&+\varepsilon\|\frac{d\xi^{m+1}_s}{\ell^\alpha}\|^2_{L^2}
+\frac{C\alpha^2}{\varepsilon}\|\frac{d\xi^{m+1}}{\ell^{\alpha+1}}\|^2_{L^2(s_{min},s_0)}
+\frac{C\alpha^2}{\varepsilon}\|\frac{d\xi^{m+1}}{\ell^\alpha}\|^2_{L^2}\\
\tag{by (\ref{B_s,D_s}) and the $L^\infty$ estimate (\ref{pdxi})}
\leq&-\frac{1}{2}\|\frac{d\xi^{m+1}_s}{\ell^\alpha}\|^2_{L^2}
+(2\varepsilon+\frac{C}{\varepsilon}\widetilde{C}\mathcal{E}_0)\|\frac{d\xi^{m+1}_s}{\ell^\alpha}\|^2_{L^2}\\
\tag{$\alpha\geq1$}
&+(\frac{C}{\varepsilon}\widetilde{C}\mathcal{E}_0+\frac{C\alpha^2}{\varepsilon})
\|\frac{d\xi^{m+1}}{\ell^{\alpha+1}}\|^2_{L^2(s_{min},s_0)}
+\frac{C\alpha^2}{\varepsilon}\|\frac{d\xi^{m+1}}{\ell^\alpha}\|^2_{L^2}
\end{align*}
Similarly, by (\ref{itbdc}), (\ref{pm}), (\ref{smaxdecay}) using in addition the estimates of the coefficients
(\ref{coeff}), (\ref{coeff_s}), (\ref{coeffinfty}), (\ref{coeff_sinfty}) we obtain
\footnote{The possibility to control this next term using an
integration by parts  is essential in order to close
our estimates for the $L^2_\alpha$ norms of $d\xi^{m+1},d\eta^{m+1}$, without recourse to the higher derivatives.}
\begin{align*}
&\int\frac{\psi_s}{\psi}\frac{Bd\xi^{m+1}d\eta^{m+1}_s}{\ell^{2\alpha}}ds\\
=&-\int\partial_s(\frac{\psi_s}{\psi})\frac{Bd\xi^{m+1}d\eta^{m+1}}{\ell^{2\alpha}}ds
-\int\frac{\psi_s}{\psi}\frac{B_sd\xi^{m+1}d\eta^{m+1}}{\ell^{2\alpha}}ds
-\int\frac{\psi_s}{\psi}\frac{Bd\xi^{m+1}_sd\eta^{m+1}}{\ell^{2\alpha}}ds\\
\tag{recall (\ref{l_s}), (\ref{ints2})}
&+2\alpha\int\frac{\psi_s}{\psi}\frac{Bd\xi^{m+1}d\eta^{m+1}}{\ell^{2\alpha+1}}\partial_s\ell ds\\
\leq&\;\|\frac{d\eta^{m+1}}{s\ell^\alpha}\|^2_{L^2}
+C\|\frac{d\xi^{m+1}}{s\ell^\alpha}\|^2_{L^2}
+\|\frac{d\eta^{m+1}}{s\ell^\alpha}\|^2_{L^2}
+C\|\frac{d\xi^{m+1}}{\ell}\|^2_{L^\infty(s)}\|\frac{B_s}{\ell^{\alpha-1}}\|^2_{L^2}\\
&+\varepsilon\|\frac{d\xi^{m+1}_s}{\ell^\alpha}\|^2_{L^2}
+\frac{C}{\varepsilon}\|\frac{d\eta^{m+1}}{s\ell^\alpha}\|^2_{L^2}
+\|\frac{d\eta^{m+1}}{s\ell^\alpha}\|^2_{L^2}
+\alpha^2\|\frac{d\xi^{m+1}}{\ell^{\alpha+1}}\|^2_{L^2(s_{min},s_0)}
+\alpha^2\|\frac{d\xi^{m+1}}{\ell^\alpha}\|^2_{L^2}\\
\tag{employing (\ref{l/s}), $0<\varepsilon<1$}
\leq&\;\frac{C}{\varepsilon}\sigma\|\frac{d\eta^{m+1}}{\ell^{\alpha+1}}\|^2_{L^2(s_{min},s_0)}
+\frac{C}{\varepsilon}\|\frac{d\eta^{m+1}}{\ell^\alpha}\|^2_{L^2(s_0,+\infty)}\\
\tag{by the $L^\infty$ estimate (\ref{pdxi}) and (\ref{B_s,D_s})}
&+(C\sigma+C\widetilde{C}\mathcal{E}_0+\alpha^2)\|\frac{d\xi^{m+1}}{\ell^{\alpha+1}}\|^2_{L^2(s_{min},s_0)}\\
\tag{$\alpha\ge1$}
&+(\varepsilon+C\widetilde{C}\mathcal{E}_0)\|\frac{d\xi^{m+1}_s}{\ell^\alpha}\|^2_{L^2}
+C\alpha^2\|\frac{d\xi^{m+1}}{\ell^\alpha}\|^2_{L^2}
\end{align*}

Summing  (\ref{L2deta}), (\ref{L2dxi})
and taking into account the above estimates we derive that for a.e. $t\in[0,T]$:
\begin{align}\label{sumdetadxi1}
\notag&\frac{1}{2}\frac{d}{dt}(\|d\eta^{m+1}\|^2_{L^2_\alpha}
+\|d\xi^{m+1}\|^2_{L^2_\alpha})+\alpha\sigma\big(\|\frac{d\eta^{m+1}}{\ell^{\alpha+1}}\|^2_{L^2(s_{min},s_0)}
+\|\frac{d\xi^{m+1}}{\ell^{\alpha+1}}\|^2_{L^2(s_{min},s_0)}\big)\\
\leq&\;C\big(\frac{\alpha^2}{\varepsilon}+\widetilde{C}\mathcal{E}_0A^2(t)\big)(\|d\eta^{m+1}\|^2_{L^2_\alpha}
+\|d\xi^{m+1}\|^2_{L^2_\alpha})\\
&+C(\alpha+\frac{\sigma}{\varepsilon})\|\frac{d\eta^{m+1}}{\ell^{\alpha+1}}\|^2_{L^2(s_{min},s_0)}
\notag+\frac{C}{\varepsilon}(\alpha^2+\sigma)\|\frac{d\xi^{m+1}}{\ell^{\alpha+1}}\|^2_{L^2(s_{min},s_0)}\\
&+(4\varepsilon+\frac{C}{\varepsilon}\widetilde{C}\mathcal{E}_0-\frac{1}{2})\|\frac{d\xi^{m+1}_s}{\ell^\alpha}\|^2_{L^2}
\notag+\int\frac{d\eta^{m+1}dF^m_1}{\ell^{2\alpha}}ds+\int\frac{d\xi^{m+1}dF^m_2}{\ell^{2\alpha}}ds
\end{align}
Now we wish to integrate on $[0,t]$, $t\leq T$ small, use the estimates
(\ref{intdFm1est}), (\ref{intdFm2est}) in Lemma \ref{lemdFm} and then apply the
Gronwall inequality. However, in order to do this we have to absorb the terms with critical weights
into the LHS. Let $\varepsilon>0$, $\mathcal{E}_0$ be sufficiently small such that
\begin{align*}
10\varepsilon+\frac{C}{\varepsilon}\widetilde{C}\mathcal{E}_0<\frac{1}{4}
\end{align*}
and $\alpha$, $\sigma$ be appropriately large satisfying
\begin{align*}
\frac{1}{4}\alpha>\frac{C}{\varepsilon}&&\frac{1}{4}\sigma>\frac{C}{\varepsilon}\alpha.
\end{align*}
Then, integrating on a small time interval $[0,t]$ and
invoking (\ref{intdFm1est}), (\ref{intdFm2est}) for
$u$ equal to $d\eta^{m+1}$ and $d\xi^{m+1}$, $\tilde{\varepsilon}=1$ respectively,
we deduce
\begin{align}\label{sumdetadxi2}
&\frac{1}{2}(\|d\eta^{m+1}\|^2_{L^2_\alpha}
+\|d\xi^{m+1}\|^2_{L^2_\alpha})\\
\notag&+\frac{\alpha\sigma}{2}\int^t_0\big(\|\frac{d\eta^{m+1}}{\ell^{\alpha+1}}\|^2_{L^2(s_{min},s_0)}
+\|\frac{d\xi^{m+1}}{\ell^{\alpha+1}}\|^2_{L^2(s_{min},s_0)}\big)d\tau
+\frac{1}{4}\int^t_0\|\frac{d\xi^{m+1}_s}{\ell^\alpha}\|^2_{L^2}d\tau\\
\notag\leq&\;C\int^t_0\big[\alpha^2+\widetilde{C}\mathcal{E}_0A^2(t)\big](\|d\eta^{m+1}\|^2_{L^2_\alpha}
+\|d\xi^{m+1}\|^2_{L^2_\alpha})d\tau\\
\notag&+C\bigg(\sigma\int^T_0A^2(t)dt+\widetilde{C}\mathcal{E}_0\bigg)
\mathcal{E}(d\eta^m,d\xi^m;T)+C(\widetilde{C}\mathcal{E}_0)^2\sqrt{T}
\mathcal{E}(d\eta^{m-1},d\xi^{m-1};T)
\end{align}
Thus, by Gronwall's inequality and (\ref{intA(t)})
for $t\in[0,T]$, $T>0$ sufficiently small, such that
\begin{align}\label{intA(t)small}
\sigma\int^T_0A^2(t)dt\leq\widetilde{C}\mathcal{E}_0&&\alpha^2T<1
\end{align}
we conclude
\begin{align}\label{Linftydetadxi}
\operatornamewithlimits{ess\,sup}_{t\in[0,T]}(\|d\eta^{m+1}\|^2_{L^2_\alpha}
+&\|d\xi^{m+1}\|^2_{L^2_\alpha})\\
\notag\leq&\;C\widetilde{C}\mathcal{E}_0\big[\mathcal{E}(d\xi^m,d\eta^m;T)
+\widetilde{C}\mathcal{E}_0\sqrt{T}\cdot\mathcal{E}(d\xi^{m-1},d\eta^{m-1};T)\big]
\end{align}
and
\begin{align}\label{L2L2detadxi}
\alpha\sigma\int^T_0\big(\|\frac{d\eta^{m+1}}{\ell^{\alpha+1}}\|^2_{L^2(s_{min},s_0)}
+&\|\frac{d\xi^{m+1}}{\ell^{\alpha+1}}\|^2_{L^2(s_{min},s_0)}\big)d\tau\\
\notag\leq&\;C\widetilde{C}\mathcal{E}_0\big[\mathcal{E}(d\xi^m,d\eta^m;T)
+\widetilde{C}\mathcal{E}_0\sqrt{T}\cdot\mathcal{E}(d\xi^{m-1},d\eta^{m-1};T)\big].
\end{align}
\subsection{The $L^2_{\alpha-1}$ estimates for $d\eta^{m+1}_s,d\xi^{m+1}_s$}\label{L2deta_sdxi_s}
\noindent

Here we seek estimates for the $s$-derivatives of the terms $d\xi^{m+1},d\eta^{m+1}$.
We remark that the estimate (\ref{L2L2detadxi})  we just derived will be essential in absorbing
certain terms that appear in this subsection to close the estimates.
In particular, it is essential that the terms $\int_0^T \| d\eta^{m+1}\|^2_{L^2_{\alpha+1}}+\| d\xi^{m+1}\|^2_{L^2_{\alpha+1}}dt$
with critical weights appearing in the RHS of (\ref{sumdeta_sdxi_s2}) have
{\it already} been controlled in the prior subsection by the energies in the RHS
of the above estimate (\ref{L2L2detadxi}).
\newline

Recall (\ref{l_t}), (\ref{partial_tds}) and (\ref{[s,t]})
to obtain (for a.e. $0\leq t\leq T$)
\begin{align}\label{L2deta_s}
\notag\frac{1}{2}\frac{d}{dt}\|d\eta^{m+1}_s\|^2_{L^2_{\alpha-1}}
=&\int\frac{d\eta^{m+1}_s\partial_td\eta^{m+1}_s}{\ell^{2\alpha-2}}ds
-(\alpha-1)\int\frac{|d\eta^{m+1}_s|^2}{\ell^{2\alpha-1}}\partial_t\ell ds
+\frac{1}{2}\int\frac{|d\eta^{m+1}_s|^2}{\ell^{2\alpha-2}}\partial_tds\\
\leq&\int\frac{d\eta^{m+1}_s\partial_sd\eta^{m+1}_t}{\ell^{2\alpha-2}}ds
-(\alpha-1)\sigma\|\frac{d\eta^{m+1}_s}{\ell^\alpha}\|^2_{L^2(s_{min},s_0)}\\
\notag&+C(\alpha-1)\|\frac{d\eta^{m+1}_s}{\ell^\alpha}\|^2_{L^2(s_{min},s_0)}
+C(\alpha-1)\|\frac{d\eta^{m+1}_s}{\ell^{\alpha-1}}\|^2_{L^2}
+C\|\frac{d\eta^{m+1}_s}{s\ell^{\alpha-1}}\|^2_{L^2}
\end{align}
As usual, from (\ref{l/s})
\begin{align*}
\|\frac{d\eta^{m+1}_s}{s\ell^{\alpha-1}}\|^2_{L^2}\leq
C\sigma\|\frac{d\eta^{m+1}_s}{\ell^\alpha}\|^2_{L^2(s_{min},s_0)}
+C\|\frac{d\eta^{m+1}_s}{\ell^{\alpha-1}}\|^2_{L^2(s_0,+\infty)},
\end{align*}
since $\ell=O(1)$, $s\in(s_0,+\infty)$, {\bf HC}; see (\ref{l}), (\ref{ints2}).
In order to estimate the term
\begin{align}\label{partial_sdeta_t}
\bullet\;\;\;\int\frac{d\eta^{m+1}_s}{\ell^{2\alpha-2}}\partial_sd\eta^{m+1}_tds
\end{align}
we plug in the RHS of the equation of $d\eta^{m+1}_t$ (\ref{dpde2})
and treat each arising term separately. For all three of the subsequent terms we
use the estimates on the coefficients (\ref{coeff}), (\ref{coeff_s}),
(\ref{coeffinfty}), (\ref{coeff_sinfty}) and (\ref{A(t)}).
\begin{align*}
\tag{\ref{partial_sdeta_t}a}
&\int\frac{d\eta^{m+1}_s}{\ell^{2\alpha-2}}\partial_s\bigg[
\frac{\psi_s^2}{\psi^2}Bd\xi^{m+1}
\bigg]ds\\
\tag{$B\in L^\infty(s)$ (\ref{pB,D})}
=&\int\frac{d\eta^{m+1}_s}{\ell^{2\alpha-2}}\bigg[
\partial_s(\frac{\psi_s^2}{\psi^2})Bd\xi^{m+1}
+\frac{\psi_s^2}{\psi^2}B_sd\xi^{m+1}
+\frac{\psi_s^2}{\psi^2}Bd\xi^{m+1}_s
\bigg]ds\\
\leq&\;\|\frac{d\eta^{m+1}_s}{s\ell^{\alpha-1}}\|^2_{L^2}
+C\|\frac{d\xi^{m+1}}{s^2\ell^{\alpha-1}}\|^2_{L^2}
+\|\frac{d\eta^{m+1}_s}{s\ell^{\alpha-1}}\|^2_{L^2}
+C\|\frac{d\xi^{m+1}}{s}\|^2_{L^\infty(s)}\|\frac{B_s}{\ell^{\alpha-1}}\|^2_{L^2}\\
&+\|\frac{d\eta^{m+1}_s}{s\ell^{\alpha-1}}\|^2_{L^2}
+C\|\frac{d\xi^{m+1}_s}{s\ell^{\alpha-1}}\|^2_{L^2}\\
\tag{employing (\ref{l/s})}
\leq&\;C\sigma\|\frac{d\eta^{m+1}_s}{\ell^\alpha}\|^2_{L^2(s_{min},s_0)}
+C\|\frac{d\eta^{m+1}_s}{\ell^{\alpha-1}}\|^2_{L^2(s_0,+\infty)}\\
\tag{$\sigma>1$}
&+C\sigma^2\|\frac{d\xi^{m+1}}{\ell^{\alpha+1}}\|^2_{L^2(s_{min},s_0)}
+C\|\frac{d\xi^{m+1}}{\ell^\alpha}\|^2_{L^2(s_0,+\infty)}
+C\sigma\|\frac{d\xi^{m+1}_s}{\ell^\alpha}\|^2_{L^2(s_{min},s_0)}\\
\tag{by the $L^\infty$ estimate (\ref{pdxi}) and (\ref{B_s,D_s}) for $B_s$, $\widetilde{C}\mathcal{E}_0<1$}
&+C\|\frac{d\xi^{m+1}_s}{\ell^{\alpha-1}}\|^2_{L^2(s_0,+\infty)}
\end{align*}
Similarly, we obtain
\begin{align*}
\tag{\ref{partial_sdeta_t}b}
&\int\frac{d\eta^{m+1}_s}{\ell^{2\alpha-2}}\partial_s\bigg[
\frac{\psi_s}{\psi}Bd\xi^{m+1}_s
\bigg]ds\\
=&\int\frac{d\eta^{m+1}_s}{\ell^{2\alpha-2}}\bigg[
\partial_s(\frac{\psi_s}{\psi})Bd\xi^{m+1}_s
+\frac{\psi_s}{\psi}B_sd\xi^{m+1}_s
+\frac{\psi_s}{\psi}Bd\xi^{m+1}_{ss}
\bigg]ds\\
\leq&\;\|\frac{d\eta^{m+1}_s}{s\ell^{\alpha-1}}\|^2_{L^2}
+C\|\frac{d\xi^{m+1}_s}{s\ell^{\alpha-1}}\|^2_{L^2}
+\|\frac{d\eta^{m+1}_s}{s\ell^{\alpha-1}}\|^2_{L^2}
+C\|d\xi^{m+1}_s\|^2_{L^\infty(s)}\|\frac{B_s}{\ell^{\alpha-1}}\|^2_{L^2}\\
\tag{recall the $L^\infty$ estimate (\ref{pdxi_s})}
&+\varepsilon\|\frac{d\xi^{m+1}_{ss}}{\ell^{\alpha-1}}\|^2_{L^2}
+\frac{C}{\varepsilon}\|\frac{d\eta^{m+1}_s}{s\ell^{\alpha-1}}\|^2_{L^2}\\
\leq&\;\frac{C}{\varepsilon}\sigma\|\frac{d\eta^{m+1}_s}{\ell^\alpha}\|^2_{L^2(s_{min},s_0)}
+\frac{C}{\varepsilon}\|\frac{d\eta^{m+1}_s}{\ell^{\alpha-1}}\|^2_{L^2(s_0,+\infty)}
\tag{by (\ref{l/s}), $0<\varepsilon<1$}\\
&+C\sigma\|\frac{d\xi^{m+1}_s}{\ell^\alpha}\|^2_{L^2(s_{min},s_0)}
+C\|\frac{d\xi^{m+1}_s}{\ell^{\alpha-1}}\|^2_{L^2(s_0,+\infty)}
+(\varepsilon+C\widetilde{C}\mathcal{E}_0)\|\frac{d\xi^{m+1}_{ss}}{\ell^{\alpha-1}}\|^2_{L^2}.
\end{align*}
Continuing analogously,
we derive
\begin{align*}
\tag{\ref{partial_sdeta_t}c}
&\int\frac{d\eta^{m+1}_s}{\ell^{2\alpha-2}}\partial_s\bigg[
\frac{A}{s}Dd\eta^{m+1}+
2n(n-1)\frac{\psi_s^2}{\psi^2}d\eta^{m+1}
\bigg]ds\\
=&\int\frac{d\eta^{m+1}_s}{\ell^{2\alpha-2}}\bigg[
\partial_s(\frac{A}{s})Dd\eta^{m+1}
+\frac{A}{s}D_sd\eta^{m+1}
+\frac{A}{s}d\eta^{m+1}_s
+2n(n-1)\partial_s(\frac{\psi_s^2}{\psi^2})d\eta^{m+1}\\
\tag{recall the estimates on $D$ (\ref{pB,D}), (\ref{B_s,D_s})}
&+2n(n-1)\frac{\psi_s^2}{\psi^2}d\eta^{m+1}_s
\bigg]ds\\
\leq&\;\|\frac{d\eta^{m+1}_s}{s\ell^{\alpha-1}}\|^2_{L^2}
+C\widetilde{C}\mathcal{E}_0A^2(t)\|\frac{d\eta^{m+1}}{s\ell^{\alpha-1}}\|^2_{L^2}
+\|\frac{d\eta^{m+1}_s}{s\ell^{\alpha-1}}\|^2_{L^2}
+CA^2(t)\|d\eta^{m+1}\|^2_{L^\infty(s)}\|\frac{D_s}{\ell^{\alpha-1}}\|^2_{L^2}\\
&+\|\frac{d\eta^{m+1}_s}{s\ell^{\alpha-1}}\|^2_{L^2}
+CA^2(t)\|\frac{d\eta^{m+1}_s}{\ell^{\alpha-1}}\|^2_{L^2}
+\|\frac{d\eta^{m+1}_s}{s\ell^{\alpha-1}}\|^2_{L^2}
+C\|\frac{d\eta^{m+1}}{s^2\ell^{\alpha-1}}\|^2_{L^2}
+C\|\frac{d\eta^{m+1}_s}{s\ell^{\alpha-1}}\|^2_{L^2}\\
\tag{by (\ref{l/s})}
\leq&\;C\sigma\|\frac{d\eta^{m+1}_s}{\ell^\alpha}\|^2_{L^2(s_{min},s_0)}
+C\|\frac{d\eta^{m+1}_s}{\ell^{\alpha-1}}\|^2_{L^2(s_0,+\infty)}
+C\sigma^2\|\frac{d\eta^{m+1}}{\ell^{\alpha+1}}\|^2_{L^2(s_{min},s_0)}\\
&C\|\frac{d\eta^{m+1}}{\ell^\alpha}\|^2_{L^2(s_0,+\infty)}
+\sigma C\widetilde{C}\mathcal{E}_0A^2(t)\|\frac{d\eta^{m+1}}{\ell^\alpha}\|^2_{L^2}
+CA^2(t)\|\frac{d\eta^{m+1}_s}{\ell^{\alpha-1}}\|^2_{L^2}\\
\tag{utilizing the $L^\infty$ estimate (\ref{pdeta})}
&+C\widetilde{C}\mathcal{E}_0A^2(t)
\bigg[\|\frac{d\eta^{m+1}}{\ell^\alpha}\|^2_{L^2}+\|\frac{d\eta^{m+1}_s}{\ell^{\alpha-1}}\|^2_{L^2}\\
&+T\int^t_0\|d\xi^{m+1}\|^2_{H^2_{\alpha+1}}d\tau
+\widetilde{C}\mathcal{E}_0\sqrt{T}\int^T_0\|d\xi^m\|^2_{H^2_{\alpha+1}}d\tau\bigg]
\end{align*}
Combining (\ref{partial_sdeta_t}a)-(\ref{partial_sdeta_t}c) with the first equation
of (\ref{dpde2}), we obtain the
following estimate for the term (\ref{partial_sdeta_t}).
\begin{align}\label{partial_sdeta_test}
&\int\frac{d\eta^{m+1}_s}{\ell^{2\alpha-2}}\partial_sd\eta^{m+1}_tds\\
\notag\leq&\;\frac{C}{\varepsilon}\sigma\|\frac{d\eta^{m+1}_s}{\ell^\alpha}\|^2_{L^2(s_{min},s_0)}
+C\sigma\|\frac{d\xi^{m+1}_s}{\ell^\alpha}\|^2_{L^2(s_{min},s_0)}
+C\sigma^2\|\frac{d\eta^{m+1}}{\ell^{\alpha+1}}\|^2_{L^2(s_{min},s_0)}\\
&\notag+C\sigma^2\|\frac{d\xi^{m+1}}{\ell^{\alpha+1}}\|^2_{L^2(s_{min},s_0)}
+C\big(\frac{1}{\varepsilon}+A^2(t)\big)\|\frac{d\eta^{m+1}_s}{\ell^{\alpha-1}}\|^2_{L^2}
+\big(\sigma C\widetilde{C}\mathcal{E}_0A^2(t)+C\big)\|\frac{d\eta^{m+1}}{\ell^\alpha}\|^2_{L^2}\\
&\notag+C\big(\|\frac{d\xi^{m+1}}{\ell^\alpha}\|^2_{L^2}+
\|\frac{d\xi^{m+1}_s}{\ell^{\alpha-1}}\|^2_{L^2}\big)
+(\varepsilon+C\widetilde{C}\mathcal{E}_0)\|\frac{d\xi^{m+1}_{ss}}{\ell^{\alpha-1}}\|^2_{L^2}
+\int\frac{d\eta^{m+1}_s\cdot\partial_s(dF^1_m)}{\ell^{2\alpha-2}}ds\\
\notag&+C\widetilde{C}\mathcal{E}_0A^2(t)\bigg[T\int^t_0\|d\xi^{m+1}\|^2_{H^2_{\alpha+1}}d\tau
+\widetilde{C}\mathcal{E}_0\sqrt{T}\int^T_0\|d\xi^m\|^2_{H^2_{\alpha+1}}d\tau\bigg]
\end{align}
(The term $\int\frac{d\eta^{m+1}_s\cdot\partial_s(dF^1_m)}{\ell^{2\alpha-2}}ds$ will be controlled
below by using the
estimate (\ref{intdFm1_sest}) from Lemma \ref{lemdFm} above, putting $u=d\eta^{m+1}_s$).

We proceed to derive estimates for $\|d\xi^{m+1}_s\|^2_{L^2_{\alpha-1}}$.
Similarly to (\ref{L2deta_s}), using
in addition (\ref{itbdc}), (\ref{smaxdecay}) we have
\begin{align}\label{L2dxi_s}
\notag&\frac{1}{2}\frac{d}{dt}\|d\xi^{m+1}_s\|^2_{L^2_{\alpha-1}}\\
\notag=&\int\frac{d\xi^{m+1}_s}{\ell^{2\alpha-2}}(\partial_sd\xi^{m+1}_t
-n\frac{\psi_{ss}}{\psi}d\xi^{m+1}_s)ds
-(\alpha-1)\int\frac{|d\xi^{m+1}_s|^2}{\ell^{2\alpha-1}}\partial_t\ell ds
+\frac{1}{2}\int\frac{|d\xi^{m+1}_s|^2}{\ell^{2\alpha-2}}\partial_tds\\
\leq&-\int\frac{d\xi^{m+1}_{ss}d\xi^{m+1}_t}{\ell^{2\alpha-2}}ds
+(2\alpha-2)\int\frac{d\xi^{m+1}_sd\xi^{m+1}_t}{\ell^{2\alpha-1}}\ell_s ds
+C\|\frac{d\xi^{m+1}_s}{s\ell^{\alpha-1}}\|^2_{L^2}\\
&-(\alpha-1)\sigma\|\frac{d\xi^{m+1}_s}{\ell^\alpha}\|^2_{L^2(s_{min},s_0)}
\notag+C(\alpha-1)\|\frac{d\xi^{m+1}_s}{\ell^\alpha}\|^2_{L^2}
+C\|\frac{d\xi^{m+1}_s}{s\ell^{\alpha-1}}\|^2_{L^2}
\end{align}
There are essentially
two terms we must estimate in this case. For both of them,
we use the estimates on the coefficients (\ref{coeff}), (\ref{coeffinfty}).
Recall that by (\ref{l_s}) $\partial_s\ell=O(1)$, $t\in[0,T]$. Therefore, we start
first with the term
\begin{align*}
\tag{from the RHS of the equation of $d\xi^{m+1}_t$ (\ref{dpde2})}
\bullet&\;(2\alpha-2)\int\frac{d\xi^{m+1}_sd\xi^{m+1}_t}{\ell^{2\alpha-1}}\ell_s ds\\
=&\;(2\alpha-2)\int\frac{d\xi^{m+1}_s}{\ell^{2\alpha-1}}\ell_s
\bigg[(\frac{\psi_{ss}}{\psi}+(n-1)\frac{\psi_s^2}{\psi^2})
(Bd\eta^{m+1}+Bd\xi^{m+1})
+\frac{\psi_{s}}{\psi}Bd\xi^{m+1}_s\\
\tag{recall the $L^\infty$ estimate on $B$}
&+|B|d\xi^{m+1}_{ss}+\frac{\psi_s}{\psi}Bd\eta^{m+1}_s+dF^m_2
\bigg]ds\\
\leq&\;\alpha^2\|\frac{d\xi^{m+1}_s}{\ell^\alpha}\|^2_{L^2}
+C\|\frac{d\eta^{m+1}}{s^2\ell^{\alpha-1}}\|^2_{L^2}
+\alpha^2\|\frac{d\xi^{m+1}_s}{\ell^\alpha}\|^2_{L^2}
+C\|\frac{d\xi^{m+1}}{s^2\ell^\alpha}\|^2_{L^2}
+\alpha^2\|\frac{d\xi^{m+1}_s}{\ell^\alpha}\|^2_{L^2}\\
&+C\|\frac{d\xi^{m+1}_s}{s\ell^{\alpha-1}}\|^2_{L^2}
+\varepsilon\|\frac{d\xi^{m+1}_{ss}}{\ell^{\alpha-1}}\|^2_{L^2}
+\frac{C}{\varepsilon}\alpha^2\|\frac{d\xi^{m+1}_s}{\ell^\alpha}\|^2_{L^2}
+\alpha^2\|\frac{d\xi^{m+1}_s}{\ell^\alpha}\|^2_{L^2}
+C\|\frac{d\eta^{m+1}_s}{s\ell^{\alpha-1}}\|^2_{L^2}\\
\tag{employ (\ref{l/s}), $0<\varepsilon<1$; see (\ref{ints2})}
&+(2\alpha-2)\int\frac{d\xi^{m+1}_sdF^m_1}{\ell^{2\alpha-1}}\ell_s ds\\
\leq&\;\frac{C}{\varepsilon}\alpha^2\|\frac{d\xi^{m+1}_s}{\ell^\alpha}\|^2_{L^2(s_{min},s_0)}
+\frac{C}{\varepsilon}\alpha^2\|\frac{d\xi^{m+1}_s}{\ell^{\alpha-1}}\|^2_{L^2(s_0,+\infty)}
+C\sigma^2\|\frac{d\eta^{m+1}}{\ell^{\alpha+1}}\|^2_{L^2(s_{min},s_0)}\\
&+C\|\frac{d\eta^{m+1}}{\ell^\alpha}\|^2_{L^2(s_0,+\infty)}
+C\sigma^2\|\frac{d\xi^{m+1}}{\ell^{\alpha+1}}\|^2_{L^2(s_{min},s_0)}
+C\|\frac{d\xi^{m+1}}{\ell^\alpha}\|^2_{L^2(s_0,+\infty)}\\
&+C\sigma\|\frac{d\xi^{m+1}_s}{\ell^\alpha}\|^2_{L^2(s_{min},s_0)}
+C\|\frac{d\xi^{m+1}_s}{\ell^{\alpha-1}}\|^2_{L^2(s_0,+\infty)}
+C\sigma\|\frac{d\eta^{m+1}_s}{\ell^\alpha}\|^2_{L^2(s_{min},s_0)}\\
&+C\|\frac{d\eta^{m+1}_s}{\ell^{\alpha-1}}\|^2_{L^2(s_0,+\infty)}
+\varepsilon\|\frac{d\xi^{m+1}_{ss}}{\ell^{\alpha-1}}\|^2_{L^2}
+(2\alpha-2)\int\frac{d\xi^{m+1}_sdF^m_1}{\ell^{2\alpha-1}}\ell_s ds
\end{align*}
We finish the estimates for the $L^2_{\alpha-1}$ norm of $d\xi^{m+1}_s$
by estimating in a similar manner the first term in the RHS of (\ref{L2dxi_s}). (We remark here that since
the term $d\xi^{m+1}_{ss}$ must necessarily be multiplied by the weight $\frac{1}{\ell^{\alpha-1}}$
in the $L^2$ norms below, the extra powers of $s$ arising in the RHS of
$d\xi^{m+1}_t$ (\ref{dpde2}) must all be absorbed in the terms $d\xi^{m+1}, d\eta^{m+1}$ with at most one derivative;
it is at this point that we make essential use of the estimates on the terms
$d\xi^{m+1}, d\eta^{m+1}$ with critical weights that have {\it already} been established
in \S\ref{L2detadxi}).
\begin{align*}
\tag{plugging in the RHS of the equation of $d\xi^{m+1}_t$ (\ref{dpde2})}
\bullet&\;-\int\frac{d\xi^{m+1}_{ss}d\xi^{m+1}_t}{\ell^{2\alpha-2}}ds\\
=&-\int\frac{d\xi^{m+1}_{ss}}{\ell^{2\alpha-2}}
\bigg[(\frac{\psi_{ss}}{\psi}+(n-1)\frac{\psi_s^2}{\psi^2})
(Bd\eta^{m+1}+Bd\xi^{m+1})+\frac{\psi_{s}}{\psi}Bd\xi^{m+1}_s\\
&+|B|d\xi^{m+1}_{ss}+\frac{\psi_s}{\psi}Bd\eta^{m+1}_s+dF^m_2
\bigg]ds\\
\leq&\;\varepsilon\|\frac{d\xi^{m+1}_{ss}}{\ell^{\alpha-1}}\|^2_{L^2}
+\frac{C}{\varepsilon}\|\frac{d\xi^{m+1}}{s^2\ell^{\alpha-1}}\|^2_{L^2}
+\varepsilon\|\frac{d\xi^{m+1}_{ss}}{\ell^{\alpha-1}}\|^2_{L^2}
+\frac{C}{\varepsilon}\|\frac{d\eta^{m+1}}{s^2\ell^{\alpha-1}}\|^2_{L^2}
+\varepsilon\|\frac{d\xi^{m+1}_{ss}}{\ell^{\alpha-1}}\|^2_{L^2}\\
&+\frac{C}{\varepsilon}\|\frac{d\xi^{m+1}_s}{s\ell^{\alpha-1}}\|^2_{L^2}
-\int\frac{|B||d\xi^{m+1}_{ss}|^2}{\ell^{2\alpha-2}}ds
+\varepsilon\|\frac{d\xi^{m+1}_{ss}}{\ell^{\alpha-1}}\|^2_{L^2}
+\frac{C}{\varepsilon}\|\frac{d\eta^{m+1}_s}{s\ell^{\alpha-1}}\|^2_{L^2}
-\int\frac{d\xi^{m+1}_{ss}dF^m_2}{\ell^{\alpha-2}}ds\\
\leq&\;4\varepsilon\|\frac{d\xi^{m+1}_{ss}}{\ell^{\alpha-1}}\|^2_{L^2}
+\frac{C}{\varepsilon}\sigma^2\|\frac{d\xi^{m+1}}{\ell^{\alpha+1}}\|^2_{L^2(s_{min},s_0)}
+\frac{C}{\varepsilon}\|\frac{d\xi^{m+1}}{\ell^\alpha}\|^2_{L^2(s_0,+\infty)}\\
&+\frac{C}{\varepsilon}\sigma^2\|\frac{d\eta^{m+1}}{\ell^{\alpha+1}}\|^2_{L^2(s_{min},s_0)}
+\frac{C}{\varepsilon}\|\frac{d\eta^{m+1}}{\ell^\alpha}\|^2_{L^2(s_0,+\infty)}
+\frac{C}{\varepsilon}\sigma\|\frac{d\xi^{m+1}_s}{\ell^\alpha}\|^2_{L^2(s_{min},s_0)}\\
&+\frac{C}{\varepsilon}\|\frac{d\xi^{m+1}_s}{\ell^{\alpha-1}}\|^2_{L^2(s_0,+\infty)}
+\frac{C}{\varepsilon}\sigma\|\frac{d\eta^{m+1}_s}{\ell^\alpha}\|^2_{L^2(s_{min},s_0)}
+\frac{C}{\varepsilon}\|\frac{d\eta^{m+1}_s}{\ell^{\alpha-1}}\|^2_{L^2(s_0,+\infty)}\\
&-\frac{1}{2}\|\frac{d\xi^{m+1}_{ss}}{\ell^{\alpha-1}}\|^2_{L^2}
-\int\frac{d\xi^{m+1}_{ss}dF^m_2}{\ell^{\alpha-2}}ds
\tag{by (\ref{l/s}) and (\ref{pB,D}), $c=\frac{1}{2}$, $0<\varepsilon<1$}
\end{align*}

\textbf{Summary}: Adding (\ref{L2deta_s}) to (\ref{L2dxi_s}) and
combining (\ref{partial_sdeta_test}) with the estimates of the
two terms above we deduce (for a.e. $0\leq t\leq T$)
\begin{align}\label{sumdeta_sdxi_s1}
&\notag\frac{1}{2}\frac{d}{dt}(\|d\eta^{m+1}_s\|^2_{L^2_{\alpha-1}}
+\|d\xi^{m+1}_s\|^2_{L^2_{\alpha-1}})\\
&+(\alpha-1)\sigma\big(\|\frac{d\eta^{m+1}_s}{\ell^\alpha}\|^2_{L^2(s_{min},s_0)}
+\|\frac{d\xi^{m+1}_s}{\ell^\alpha}\|^2_{L^2(s_{min},s_0)}\big)\\
\notag\leq&\;C\big(\frac{\alpha^2}{\varepsilon}+A^2(t)\big)(\|d\eta^{m+1}_s\|^2_{L^2_{\alpha-1}}
+\|d\xi^{m+1}_s\|^2_{L^2_{\alpha-1}})
+C\big(\frac{1}{\varepsilon}+\sigma\widetilde{C}\mathcal{E}_0A^2(t)\big)
(\|d\eta^{m+1}\|^2_{L^2_\alpha}+\|d\xi^{m+1}\|^2_{L^2_\alpha})\\
\notag&+\frac{C}{\varepsilon}(\alpha^2+\sigma)\big(\|\frac{d\eta^{m+1}_s}{\ell^\alpha}\|^2_{L^2(s_{min},s_0)}
+\|\frac{d\xi^{m+1}_s}{\ell^\alpha}\|^2_{L^2(s_{min},s_0)}\big)
+C\widetilde{C}\mathcal{E}_0A^2(t)T\int^t_0\|d\xi^{m+1}\|^2_{H^2_{\alpha+1}}d\tau\\
\notag&+(C\widetilde{C}\mathcal{E}_0+10\varepsilon-\frac{1}{2})\|\frac{d\xi^{m+1}_{ss}}{\ell^{\alpha-1}}\|^2_{L^2}
+\frac{C}{\varepsilon}\sigma^2\big(\|\frac{d\eta^{m+1}}{\ell^{\alpha+1}}\|^2_{L^2(s_{min},s_0)}
+\|\frac{d\xi^{m+1}}{\ell^{\alpha+1}}\|^2_{L^2(s_{min},s_0)}\big)\\
\notag&+\int\frac{d\eta^{m+1}_s\partial_s(dF^m_1)}{\ell^{2\alpha-2}}ds
+(2\alpha-2)\int\frac{d\xi^{m+1}_sdF^m_2}{\ell^{2\alpha-1}}\ell_s ds
-\int\frac{d\xi^{m+1}_{ss}dF^m_2}{\ell^{2\alpha-2}}ds\\
&\notag+C(\widetilde{C}\mathcal{E}_0)^2A^2(t)\sqrt{T}\int^T_0\|d\xi^m\|^2_{H^2_{\alpha+1}}d\tau
\end{align}
We next integrate the above  with respect to time
to close the estimate, using Lemma \ref{lemdFm} and the estimates derived in \S\ref{L2detadxi}.
For this purpose, we need to choose certain parameters to be small enough:

Apply (\ref{intdFm2est}) with $u=-\ell^2d\xi^{m+1}$ and let
$\varepsilon,\tilde{\varepsilon},\mathcal{E}_0$ be small such that
\begin{align*}
C\widetilde{C}\mathcal{E}_0+10\varepsilon+\tilde{\varepsilon}+C\widetilde{C}\mathcal{E}_0T\int^t_0A^2(\tau)d\tau<\frac{1}{4}
\end{align*}
and $\alpha,\sigma$ appropriately large satisfying
\begin{align*}
\frac{1}{4}(\alpha-1)>\frac{C}{\varepsilon}&&\frac{1}{4}\sigma>\frac{\alpha^2}{\alpha-1}\frac{C}{\varepsilon},
\end{align*}
in accordance with the considerations after the derivation of (\ref{sumdetadxi1}).
Then, (\ref{sumdeta_sdxi_s1}) gives
\begin{align}\label{sumdeta_sdxi_s2}
&\frac{1}{2}(\|d\eta^{m+1}_s\|^2_{L^2_{\alpha-1}}
+\|d\xi^{m+1}_s\|^2_{L^2_{\alpha-1}})\\
\notag&+\frac{(\alpha-1)\sigma}{2}\int^t_0\big(\|\frac{d\eta^{m+1}_s}{\ell^\alpha}\|^2_{L^2(s_{min},s_0)}
+\|\frac{d\xi^{m+1}_s}{\ell^\alpha}\|^2_{L^2(s_{min},s_0)}\big)d\tau
+\frac{1}{4}\int^t_0\|\frac{d\xi^{m+1}_{ss}}{\ell^{\alpha-1}}\|^2_{L^2}d\tau\\
\notag\leq&\;C\int^t_0\big(\alpha^2+A^2(t)\big)(\|d\eta^{m+1}_s\|^2_{L^2_{\alpha-1}}
+\|d\xi^{m+1}_s\|^2_{L^2_{\alpha-1}})d\tau\\
&\notag+C\big(T+\sigma\widetilde{C}\mathcal{E}_0\int^T_0A^2(\tau)d\tau\big)
\operatornamewithlimits{ess\,sup}_{\tau\in[0,T]}
(\|d\eta^{m+1}\|^2_{L^2_\alpha}+\|d\xi^{m+1}\|^2_{L^2_\alpha})\\
&\notag+C\sigma^2\int^T_0\big(\|\frac{d\eta^{m+1}}{\ell^{\alpha+1}}\|^2_{L^2(s_{min},s_0)}
+\|\frac{d\xi^{m+1}}{\ell^{\alpha+1}}\|^2_{L^2(s_{min},s_0)}\big)d\tau
+\int^t_0\int\frac{d\eta^{m+1}_s\partial_s(dF^m_1)}{\ell^{2\alpha-2}}dsd\tau\\
&+(2\alpha-2)\int^t_0\int\frac{d\xi^{m+1}_sdF^m_2}{\ell^{2\alpha-1}}\ell_s dsd\tau
\notag+C(\widetilde{C}\mathcal{E}_0)^2\sqrt{T}\int^T_0A^2(\tau)d\tau\int^T_0\|d\xi^m\|^2_{H^2_{\alpha+1}}d\tau\\
&\notag+C\bigg(\sigma\int^T_0A^2(\tau)d\tau+\widetilde{C}\mathcal{E}_0\bigg)
\mathcal{E}(d\eta^m,d\xi^m;T)
+C(\widetilde{C}\mathcal{E}_0)^2\sqrt{T}
\int^T_0\|d\xi^{m-1}\|^2_{H^2_{\alpha+1}}d\tau
\end{align}
We invoke 1) the estimates in the previous subsection (\ref{Linftydetadxi}), (\ref{L2L2detadxi}),
for $T>0$ within (\ref{intA(t)small}),
2) (\ref{intdFm1_sest}) for $u=d\eta^{m+1}_s$ and 3) (\ref{intdFm2est}) setting $u=\ell_s\ell d\xi^{m+1}_s$,
$\tilde{\varepsilon}=\alpha$ to derive for $\alpha,\sigma$ large enough)
\begin{align}\label{sumdeta_sdxi_s3}
&\frac{1}{2}(\|d\eta^{m+1}_s\|^2_{L^2_{\alpha-1}}
+\|d\xi^{m+1}_s\|^2_{L^2_{\alpha-1}})\\
\notag&+\frac{(\alpha-1)\sigma}{4}\int^t_0\big(\|\frac{d\eta^{m+1}_s}{\ell^\alpha}\|^2_{L^2(s_{min},s_0)}
+\|\frac{d\xi^{m+1}_s}{\ell^\alpha}\|^2_{L^2(s_{min},s_0)}\big)d\tau
+\frac{1}{4}\int^t_0\|\frac{d\xi^{m+1}_{ss}}{\ell^{\alpha-1}}\|^2_{L^2}d\tau\\
\notag\leq&\;C\int^t_0\bigg[\alpha^2+A^2(t)
+\frac{\sqrt{\widetilde{C}\mathcal{E}_0}}{\varepsilon_1}\big(
\|\frac{\xi^m_{ss}}{\ell^{\alpha-1}}\|_{L^2}+\|\frac{\xi^{m-1}_{ss}}{\ell^{\alpha-1}}\|_{L^2}\big)\bigg]
(\|d\eta^{m+1}_s\|^2_{L^2_{\alpha-1}}
+\|d\xi^{m+1}_s\|^2_{L^2_{\alpha-1}})d\tau\\
&\notag+C\int^t_0\big(\|\frac{\xi^m_{ss}}{\ell^{\alpha-1}}\|_{L^2}+\|\frac{\xi^{m-1}_{ss}}{\ell^{\alpha-1}}\|_{L^2}\big)
\|d\xi^m\|_{H^2_{\alpha+1}}\|d\eta^{m+1}_s\|_{L^2_{\alpha-1}}d\tau\\
&\notag+(C\sigma\widetilde{C}\mathcal{E}_0+\varepsilon_1)
\mathcal{E}(d\eta^m,d\xi^m;T)+C\sigma(\widetilde{C}\mathcal{E}_0)^2\sqrt{T}\mathcal{E}(d\eta^{m-1},d\xi^{m-1};T).
\end{align}
Setting
\begin{align}\label{H_i}
H_1(\tau)=\alpha^2+A^2(t)+\frac{\sqrt{\widetilde{C}\mathcal{E}_0}}{\varepsilon_1}\big(\|\frac{\xi^m_{ss}}{\ell^{\alpha-1}}\|_{L^2}
+&\|\frac{\xi^{m-1}_{ss}}{\ell^{\alpha-1}}\|_{L^2}\big),\\
\notag H_2(\tau)=&\;\big(\|\frac{\xi^m_{ss}}{\ell^{\alpha-1}}\|_{L^2}
+\|\frac{\xi^{m-1}_{ss}}{\ell^{\alpha-1}}\|_{L^2}\big)
\|\frac{d\xi^m_{ss}}{\ell^{\alpha-1}}\|_{L^2},
\end{align}
(\ref{sumdeta_sdxi_s3}) yields
\begin{align}\label{sumdeta_sdxi_s4}
&\frac{1}{2}(\|d\eta^{m+1}_s\|^2_{L^2_{\alpha-1}}
+\|d\xi^{m+1}_s\|^2_{L^2_{\alpha-1}})\\
\notag&+\frac{(\alpha-1)\sigma}{4}\int^t_0\big(\|\frac{d\eta^{m+1}_s}{\ell^\alpha}\|^2_{L^2(s_{min},s_0)}
+\|\frac{d\xi^{m+1}_s}{\ell^\alpha}\|^2_{L^2(s_{min},s_0)}\big)d\tau
+\frac{1}{4}\int^t_0\|\frac{d\xi^{m+1}_{ss}}{\ell^{\alpha-1}}\|^2_{L^2}d\tau\\
\notag\leq&\;C\int^t_0H_1(\tau)(\|d\eta^{m+1}_s\|^2_{L^2_{\alpha-1}}
+\|d\xi^{m+1}_s\|^2_{L^2_{\alpha-1}})d\tau
+C\int^t_0H_2(\tau)\big(\|d\eta^{m+1}_s\|^2_{L^2_{\alpha-1}}+\|d\xi^{m+1}_s\|^2_{L^2_{\alpha-1}}\big)^{\frac{1}{2}}d\tau\\
&\notag+(C\sigma\widetilde{C}\mathcal{E}_0+\varepsilon_1)
\mathcal{E}(d\eta^m,d\xi^m;T)+C\sigma(\widetilde{C}\mathcal{E}_0)^2\sqrt{T}\mathcal{E}(d\eta^{m-1},d\xi^{m-1};T).
\end{align}
It is clear now that we can use some Gronwall type of inequality to get an estimate
from (\ref{sumdeta_sdxi_s4}).
\begin{lemma}\label{extGron}
Let $f:[a,b]\to\R$ be a continuous function which satisfies:
\begin{align*}
\frac{1}{2}f^2(t)\leq\frac{1}{2}f_0^2+\int^t_a\Psi(\tau)f(\tau)d\tau,\;\;\;t\in[a,b],
\end{align*}
where $f_0\in\R$ and $\Psi$ nonnegative continuous in $[a,b]$. Then the estimate
\begin{align*}
\frac{1}{2}|f(t)|\leq\frac{1}{2}|f_0|+\int^t_a\Psi(\tau)d\tau,\;\;\;t\in[a,b]
\end{align*}
holds.
\end{lemma}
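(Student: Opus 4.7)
\noindent\textbf{Proof proposal for Lemma \ref{extGron}.} The plan is to reduce the conclusion to a first-order ODE comparison by first upgrading the hypothesis to an inequality in $|f|$ and then comparing $\tfrac{1}{2}f^2$ against a $C^1$ majorant. Since $\Psi(\tau)\geq 0$ and $f(\tau)\leq |f(\tau)|$, the assumption immediately yields
\begin{equation*}
\tfrac{1}{2}f^2(t)\leq \tfrac{1}{2}f_0^2+\int_a^t \Psi(\tau)|f(\tau)|\,d\tau,\qquad t\in[a,b].
\end{equation*}

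Next, for each $\varepsilon>0$ I introduce the auxiliary function
\begin{equation*}
F_\varepsilon(t):=\tfrac{1}{2}f_0^2+\varepsilon+\int_a^t \Psi(\tau)|f(\tau)|\,d\tau,
\end{equation*}
which is $C^1$ on $[a,b]$ (the integrand is continuous because $f$ and $\Psi$ are), strictly positive, and satisfies $\tfrac{1}{2}|f(t)|^2\leq F_\varepsilon(t)$, i.e.\ $|f(t)|\leq \sqrt{2F_\varepsilon(t)}$. Differentiating,
\begin{equation*}
F_\varepsilon'(t)=\Psi(t)|f(t)|\leq \Psi(t)\sqrt{2F_\varepsilon(t)},
\end{equation*}
so, since $F_\varepsilon>0$,
\begin{equation*}
\frac{d}{dt}\sqrt{2F_\varepsilon(t)}=\frac{F_\varepsilon'(t)}{\sqrt{2F_\varepsilon(t)}}\leq \Psi(t).
\end{equation*}
Integrating on $[a,t]$ and noting $\sqrt{2F_\varepsilon(a)}=\sqrt{f_0^2+2\varepsilon}$ gives
\begin{equation*}
|f(t)|\leq \sqrt{2F_\varepsilon(t)}\leq \sqrt{f_0^2+2\varepsilon}+\int_a^t \Psi(\tau)\,d\tau.
\end{equation*}
Letting $\varepsilon\to 0^+$ (the RHS is continuous in $\varepsilon$) yields $|f(t)|\leq |f_0|+\int_a^t \Psi(\tau)\,d\tau$, and dividing by $2$ together with $\Psi\geq 0$ gives
\begin{equation*}
\tfrac{1}{2}|f(t)|\leq \tfrac{1}{2}|f_0|+\tfrac{1}{2}\int_a^t\Psi(\tau)\,d\tau\leq \tfrac{1}{2}|f_0|+\int_a^t\Psi(\tau)\,d\tau,
\end{equation*}
which is the desired bound.

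There is no serious obstacle here; the only mild subtlety is the case $f_0=0$, where the naive division by $\sqrt{2F}$ at $\tau=a$ is singular. The $\varepsilon$-perturbation in the definition of $F_\varepsilon$ bypasses this in a standard way, after which one simply passes to the limit. I also note that the same argument works verbatim if one merely assumes $\Psi\in L^1(a,b)$ with $\Psi\geq 0$ and $f$ continuous, since $F_\varepsilon$ is then absolutely continuous and the computation above still goes through a.e.
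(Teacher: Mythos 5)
Your proof is correct, and it is the standard argument for this Bihari/Gronwall-type inequality; the paper in fact states Lemma \ref{extGron} without giving a proof, so there is nothing to compare against on the paper's side. Two small remarks: (i) your chain actually establishes the stronger conclusion $|f(t)|\leq |f_0|+\int_a^t\Psi(\tau)\,d\tau$, from which the statement in the lemma follows after dividing by $2$ and using $\Psi\geq 0$ exactly as you observe; (ii) the $\varepsilon$-regularization is indeed the right way to handle the degenerate case $f_0=0$ (and more generally any interval on which $F_0$ vanishes), since it guarantees $\sqrt{2F_\varepsilon}$ is differentiable everywhere before passing to the limit.
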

Applying the preceding lemma to (\ref{sumdeta_sdxi_s2}) for
\begin{align*}
f^2=\|d\eta^{m+1}_s\|^2_{L^2_{\alpha-1}}
+&\|d\xi^{m+1}_s\|^2_{L^2_{\alpha-1}}\\
\frac{1}{2}f_0^2=&\;(C\sigma\widetilde{C}\mathcal{E}_0+\varepsilon_1)\mathcal{E}(d\eta^m,d\xi^m;T)
+C\sigma(\widetilde{C}\mathcal{E}_0)^2\sqrt{T}\mathcal{E}(d\eta^{m-1},d\xi^{m-1};T)
\end{align*}
and
\begin{align*}
\Psi(\tau)=CH_1(\tau)\big(\|d\eta^{m+1}_s\|^2_{L^2_{\alpha-1}}
+\|d\xi^{m+1}_s\|^2_{L^2_{\alpha-1}}\big)^{\frac{1}{2}}+CH_2(\tau),
\end{align*}
varying $t$ in $[0,T]$, we derive
\begin{align}\label{deta_sdxi_sest1}
\big(\|d\eta^{m+1}_s\|^2_{L^2_{\alpha-1}}
+&\|d\xi^{m+1}_s\|^2_{L^2_{\alpha-1}}\big)^{\frac{1}{2}}\\
\notag\leq&\;C\int^t_0H_1(\tau)\big(\|d\eta^{m+1}_s\|^2_{L^2_{\alpha-1}}
+\|d\xi^{m+1}_s\|^2_{L^2_{\alpha-1}}\big)^{\frac{1}{2}}d\tau+C\int^T_0H_2(\tau)d\tau\\
\notag&+(C\sigma\widetilde{C}\mathcal{E}_0+2\varepsilon_1)^\frac{1}{2}\sqrt{\mathcal{E}(d\eta^m,d\xi^m;T)}
+(C\sigma\sqrt{T})^\frac{1}{2}\widetilde{C}\mathcal{E}_0\sqrt{\mathcal{E}(d\eta^{m-1},d\xi^{m-1};T)}.
\end{align}
By (\ref{H_i}) and (\ref{itenest}), (\ref{intA(t)}
we readily obtain
\begin{align}\label{H_iest}
\notag\int^T_0H_1(\tau)d\tau=&\int^T_0\bigg[\alpha^2+A^2(t)+\frac{\sqrt{\widetilde{C}\mathcal{E}_0}}{\varepsilon_1}
\big(\|\frac{\xi^m_{ss}}{\ell^{\alpha-1}}\|_{L^2}
+\|\frac{\xi^{m-1}_{ss}}{\ell^{\alpha-1}}\|_{L^2}\big)
\bigg]d\tau\leq o(1)+2\sqrt{2}\frac{\widetilde{C}\mathcal{E}_0}{\varepsilon_1}\sqrt{T}\\
\int^T_0H_2(\tau)d\tau=&\int^T_0\big(\|\frac{\xi^m_{ss}}{\ell^{\alpha-1}}\|_{L^2}
+\|\frac{\xi^{m-1}_{ss}}{\ell^{\alpha-1}}\|_{L^2}\big)\|\frac{d\xi^m_{ss}}{\ell^{\alpha-1}}\|_{L^2}d\tau\\
\notag\leq&\;\bigg(\int^T_02\|\frac{\xi^m_{ss}}{\ell^{\alpha-1}}\|^2_{L^2}
+2\|\frac{\xi^{m-1}_{ss}}{\ell^{\alpha-1}}\|^2_{L^2}d\tau\bigg)^{\frac{1}{2}}
\bigg(\int^T_0\|\frac{d\xi^m_{ss}}{\ell^{\alpha-1}}\|^2_{L^2}d\tau\bigg)^{\frac{1}{2}}\\
\notag\leq&\;2\sqrt{2}\sqrt{\widetilde{C}\mathcal{E}_0}\cdot\sqrt{\mathcal{E}(d\eta^m,d\xi^m;T)}
\end{align}
Thus, from (\ref{deta_sdxi_sest1}) and the standard Gronwall inequality it
follows that
\begin{align}\label{deta_sdxi_sest2}
\operatornamewithlimits{ess\,sup}_{t\in[0,T]}\big(\|d\eta^{m+1}_s\|^2_{L^2_{\alpha-1}}
+&\|d\xi^{m+1}_s\|^2_{L^2_{\alpha-1}}\big)^{\frac{1}{2}}\\
\leq&\;
\notag\exp\{o(1)+C\sqrt{T}\frac{\widetilde{C}\mathcal{E}_0}{\varepsilon_1}\}\big[
(C\sigma\widetilde{C}\mathcal{E}_0+2\varepsilon_1)^{\frac{1}{2}}\sqrt{\mathcal{E}(d\eta^m,d\xi^m;T)}\\
&\notag+(C\sigma\sqrt{T})^{\frac{1}{2}}\widetilde{C}\mathcal{E}_0\sqrt{\mathcal{E}(d\eta^{m-1},d\xi^{m-1};T)}\big]
\end{align}
Finally, going back to (\ref{sumdeta_sdxi_s4}) and employing
(\ref{H_iest}), (\ref{deta_sdxi_sest2}) we derive
\begin{align}\label{L2L2deta_sdxi_s}
(\alpha-1)\sigma&\int^T_0\big(\|\frac{d\eta^{m+1}_s}{\ell^\alpha}\|^2_{L^2(s_{min},s_0)}
+\|\frac{d\xi^{m+1}_s}{\ell^\alpha}\|^2_{L^2(s_{min},s_0)}\big)d\tau\\
\notag\leq&\;\big(o(1)+C\frac{\widetilde{C}\mathcal{E}_0}{\varepsilon_1}+C\big)
\exp\{o(1)+C\sqrt{T}\frac{\widetilde{C}\mathcal{E}_0}{\varepsilon_1}\}
\big[(\sigma\widetilde{C}\mathcal{E}_0+\varepsilon_1)\mathcal{E}(d\eta^m,d\xi^m;T)\\
&\notag+\sigma\sqrt{T}(\widetilde{C}\mathcal{E}_0)^2\mathcal{E}(d\eta^{m-1},d\xi^{m-1};T)\big]
\end{align}
and
\begin{align}\label{L2L2dxi_ss}
&\int^T_0\|\frac{d\xi^{m+1}_{ss}}{\ell^{\alpha-1}}\|^2_{L^2}d\tau\\
\notag\leq&\;\big(o(1)+C\frac{\widetilde{C}\mathcal{E}_0}{\varepsilon_1}+C\big)
\exp\{o(1)+C\sqrt{T}\frac{\widetilde{C}\mathcal{E}_0}{\varepsilon_1}\}
\big[(\sigma\widetilde{C}\mathcal{E}_0+\varepsilon_1)\mathcal{E}(d\eta^m,d\xi^m;T)\\
&\notag+\sigma\sqrt{T}(\widetilde{C}\mathcal{E}_0)^2\mathcal{E}(d\eta^{m-1},d\xi^{m-1};T)\big].
\end{align}
\subsection{Proof of Theorem \ref{gensol}}\label{gensolproof}
\noindent

Combining the estimates (\ref{Linftydetadxi}), (\ref{L2L2detadxi}),
and (\ref{deta_sdxi_sest2}), (\ref{L2L2deta_sdxi_s}), (\ref{L2L2dxi_ss})
we conclude that there exist appropriately small $\mathcal{E}_0,\varepsilon_1$ such that
the relation
\begin{align}\label{contr}
\sqrt{\mathcal{E}(d\eta^{m+1},d\xi^{m+1};T)}\leq
\kappa\sqrt{\mathcal{E}(d\eta^m,d\xi^m;T)}
+\kappa\sqrt{\mathcal{E}(d\eta^{m-1},d\xi^{m-1};T)}
\end{align}
holds, for some $0<\kappa<\frac{1}{4}$.
Consequently, the sequence
$\big\{\eta^m,\xi^m\big\}^\infty_{m=0}$ is Cauchy with respect to the norm
$\sqrt{\mathcal{E}(\cdot,\cdot;T)}$. It is clear that the latter
suffices for convergence of the iterates
to a solution $\eta,\xi$ of (\ref{pde}), as required in Theorem \ref{gensol}; see (\ref{energy}) and
Proposition \ref{itsol}. The energy estimates directly imply the uniqueness of the solution in
the function spaces under consideration.

This completes the proof of Theorem \ref{gensol}.$\qquad\blacksquare$

\section{The Linear step in the iteration: Proof of Proposition \ref{itsol}}\label{modprob}
\noindent

In the beginning of \S\ref{genprob} we took for granted that at each step $m+1$, $m=0,1,\ldots$, the
linear system (\ref{itpde}) possessed a (unique) solution with prescribed regularity
and energy bounds, as formulated in Proposition \ref{itsol}. We prove these assertions in this section.
The proof will follow from the study of a general type
of such linear systems. The strategy for deriving
the desired energy estimates in this subsection follows in large part the one in \S\ref{genprob}. Indeed,
as we shall see below many derivations are similar and in fact simpler than
the ones
in the previous subsection.
Whenever this is the case, we will cite the relevant argument in \S\ref{genprob}
for the sake of brevity.
\begin{definition}\label{f,F}
We introduce generic notation of functions
\begin{align}\label{f,Freg}
f,g\in L^\infty(0,T;H^1(s))&& F_1\in L^2(0,T;H^1(s)),\;\;F_2\in L^2(0,T;L^2(s))
\end{align}
satisfying (for a.e. $t$)
\begin{align}\label{f}
\frac{1}{2}+\|f\|_{L^\infty(s)}<\|f\|_{L^\infty(s)}+g(s,t)<C&&
\|\frac{f_s}{\ell^{\alpha-1}}\|^2_{L^2}
+\|\frac{g_s}{\ell^{\alpha-1}}\|^2_{L^2}<\tilde{\varepsilon}<1,
\end{align}
for appropriate positive constants $c,C$, $\tilde{\varepsilon}$ small, and
\begin{align}\label{F}
\int^T_0\|\frac{F_i}{\ell^{\alpha-1}}&\|^2_{L^2}dt<+\infty,\qquad i=1,2\\
\notag&\int\frac{u\cdot\partial_sF_1}{\ell^{2\alpha-2}}ds\leq
C\sigma\|\frac{u}{\ell^\alpha}\|^2_{L^2(s_{min},s_0)}+G_1(t)\|\frac{u}{\ell^{\alpha-1}}\|^2_{L^2}
+G_2(t),
\end{align}
for the general function $u$ (a.e. $0\leq t\leq T$), where $G_1(t),G_2(t)$ are positive, $t$-integrable functions.
\end{definition}

Motivated by (\ref{itpde}), we consider the following (more general, as we will see) linear system.
\begin{align}\label{lpde}
\notag\eta_t=&\;\frac{\psi^2_s}{\psi^2}f\xi+\frac{\psi_s}{\psi}f\xi_s
-\frac{A}{s}f\eta+2n(n-1)\frac{\psi^2_s}{\psi^2}\eta+F_1\\
\xi_t=&\;(\frac{\psi_{ss}}{{\psi}}+(n-1)\frac{\psi^2_s}{\psi^2})f\cdot(\eta+\xi)
+\frac{\psi_s}{\psi}f\xi_s+g\xi_{ss}+\frac{\psi_s}{\psi}f\eta_s+F_2\\
\notag&\eta\bigg|_{t=0}=\eta_0\qquad\xi\bigg|_{t=0}=\xi_0,
\qquad\qquad\xi\bigg|_{(s_{min},t)}=\big(\xi\bigg|_{(s_0,t)},\;{\bf G}\big)=0
\qquad t\geq0.
\end{align}
We summarize our goal with the following theorem.
\begin{theorem}\label{modsol}
Assume
\begin{align*}
\eta_0\in H^1_{\alpha}&&\xi_0\in H^1_{\alpha,0},
\end{align*}
Then, for some small $T>0$ there exist $\alpha,\sigma$ sufficiently
large such that (\ref{lpde}) has a unique solution up to time $T>0$,
in the sense of Definition (\ref{sol}); in particular
\begin{align}\label{modsp}
\eta\in L^\infty(0,T;H^1_\alpha)\cap L^2(0,T;H^1_{\alpha+1})&&
\xi\in L^\infty(0,T;H^1_{\alpha,0})\cap L^2(0,T;H^2_{\alpha+1})\\
\notag\eta_t\in L^2(0,T;L^2_\alpha)&&\xi_t\in L^2(0,T;L^2_{\alpha-1})
\end{align}
Further, the solution satisfies the energy estimate
\begin{align}\label{modenest}
\mathcal{E}(\eta,\xi;T)
\leq\widetilde{C}\bigg[\mathcal{E}_0+\sum\int^T_0\|\frac{F_i}{\ell^{\alpha-1}}\|^2_{L^2}dt
+\int^T_0G_2(t)dt\bigg],
\end{align}
for some positive constant $\widetilde{C}=O\big(\int^T_0G_1(t)dt\big)$.
%
%
\end{theorem}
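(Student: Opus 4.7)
The plan is to build $(\eta,\xi)$ by a sub-iteration that decouples the first-order ODE for $\eta$ from the parabolic PDE for $\xi$, as suggested in the outline of \S\ref{intro}. Starting from $(\eta^0,\xi^0)\equiv(0,0)$, I would define $\eta^{k+1}$ as the solution of
\begin{align*}
\eta^{k+1}_t=\Bigl(2n(n-1)\tfrac{\psi_s^2}{\psi^2}-\tfrac{A}{s}f\Bigr)\eta^{k+1}
+\tfrac{\psi_s^2}{\psi^2}f\,\xi^{k}+\tfrac{\psi_s}{\psi}f\,\xi^{k}_s+F_1,\qquad \eta^{k+1}|_{t=0}=\eta_0,
\end{align*}
and then $\xi^{k+1}$ as the solution of the parabolic problem
\begin{align*}
\xi^{k+1}_t=g\,\xi^{k+1}_{ss}+\tfrac{\psi_s}{\psi}f\,\xi^{k+1}_s
+\bigl(\tfrac{\psi_{ss}}{\psi}+(n-1)\tfrac{\psi_s^2}{\psi^2}\bigr)f\,\xi^{k+1}+H^{k+1}+F_2,
\end{align*}
with $H^{k+1}:=(\tfrac{\psi_{ss}}{\psi}+(n-1)\tfrac{\psi_s^2}{\psi^2})f\,\eta^{k+1}+\tfrac{\psi_s}{\psi}f\,\eta^{k+1}_s$ treated as data, and the initial/boundary conditions of (\ref{lpde}). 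Any fixed point of the map $(\eta^k,\xi^k)\mapsto(\eta^{k+1},\xi^{k+1})$ solves (\ref{lpde}).

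The ODE step is carried out explicitly via an integrating factor: since $s$ enters only as a parameter,
\begin{align*}
\eta^{k+1}(s,t)=\mu(s,t)^{-1}\Bigl[\eta_0+\int_0^t\mu(s,\tau)\bigl(\tfrac{\psi_s^2}{\psi^2}f\xi^k+\tfrac{\psi_s}{\psi}f\xi^k_s+F_1\bigr)d\tau\Bigr],
\end{align*}
with $\mu(s,t)=\exp\int_0^t(\tfrac{A}{s}f-2n(n-1)\tfrac{\psi_s^2}{\psi^2})d\tau$. Although the exponent contains the singular factors $1/s,\,1/s^2$, the bounds (\ref{1/s1+}) and (\ref{intA(t)}) show the time integral of $1/\psi^2$ stays finite uniformly in $s$, while the contribution of $\psi_s^2/\psi^2$ is absorbed by the weight $\ell^2=s^2+\sigma t$ once $\sigma$ is large. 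The weighted $H^1_\alpha$ and $L^2(0,T;H^1_{\alpha+1})$ estimates for $\eta^{k+1}$ follow exactly as in \S\ref{L2detadxi}--\S\ref{L2deta_sdxi_s}: test against $\eta^{k+1}/\ell^{2\alpha}$ and against $-\partial_s(\eta^{k+1}_s/\ell^{2\alpha-2})$, use (\ref{l_t})--(\ref{partial_tds}) to extract the negative $-\alpha\sigma\|\eta^{k+1}/\ell^{\alpha+1}\|_{L^2(s_{\min},s_0)}^2$ coming from $\partial_t\ell$, and apply hypothesis (\ref{F}) on $\partial_s F_1$---which is specifically designed to be absorbed by that negative term.

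For the parabolic step I would use a Galerkin approximation in the weighted space $H^1_{\alpha,0}$. Pick a countable total sequence $\{\varphi_j\}\subset C_c^\infty((s_{\min}(t),s_{\max}(t)))$, realised by transporting fixed compactly supported functions via the diffeomorphism $\rho_t$ so as to remain compactly supported and bounded uniformly in $t\in[0,T]$. Projecting the $\xi$-equation onto $V_N=\mathrm{span}\{\varphi_1,\dots,\varphi_N\}$ yields a linear ODE system for the coefficients, well-posed by virtue of the uniform parabolicity $g\ge\tfrac12$ from (\ref{f}). The same testing scheme (tests $\xi_N/\ell^{2\alpha}$ and $-\xi_{N,ss}/\ell^{2\alpha-2}$) produces uniform-in-$N$ energy estimates for $\xi_N$ in $L^\infty(0,T;H^1_{\alpha,0})\cap L^2(0,T;H^2_{\alpha+1})$ and for $\partial_t\xi_N$ in $L^2(0,T;L^2_{\alpha-1})$, provided $\alpha,\sigma$ are chosen large in the same order as in \S\ref{L2detadxi}: first $\alpha$ to absorb the constant $C/\varepsilon$ coming from the couplings, then $\sigma$ to beat $\alpha^2 C/\varepsilon$. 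Weak-$\ast$ compactness yields a weak limit $\xi^{k+1}$; the energy bounds promote it to a strong solution satisfying the equation in the sense of test functions.

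The main obstacle will be the source term $\tfrac{\psi_s}{\psi}f\eta^{k+1}_s$ in the $\xi$-equation: since our energy controls only one $s$-derivative of $\eta^{k+1}$, testing it directly against $\xi^{k+1}_{ss}/\ell^{2\alpha-2}$ lies at the borderline of available estimates. The remedy, exactly as in the treatment of the analogous term $\int\tfrac{\psi_s}{\psi}Bd\xi^{m+1}d\eta^{m+1}_s/\ell^{2\alpha}$ in \S\ref{L2detadxi}, is to integrate by parts and use (\ref{l/s}) to trade one factor of $1/s$ for one factor of $1/\ell$ at the cost of $\sigma$. Combining the ODE and PDE estimates then yields
\begin{align*}
\mathcal{E}(\eta^{k+1},\xi^{k+1};T)\le \kappa\,\mathcal{E}(\eta^k,\xi^k;T)+\widetilde{C}\Bigl[\mathcal{E}_0+\sum_{i=1,2}\int_0^T\|F_i/\ell^{\alpha-1}\|_{L^2}^2\,dt+\int_0^T G_2(t)\,dt\Bigr],
\end{align*}
with $\kappa<1$ for $T$ small enough that $\int_0^T G_1(t)\,dt$ is small. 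The contraction produces the unique solution in the spaces (\ref{modsp}) satisfying (\ref{modenest}) with $\widetilde{C}=O\bigl(\exp\int_0^T G_1(t)\,dt\bigr)$, the exponential coming from the Gronwall step. Uniqueness follows by applying the same estimates to the difference of two solutions with vanishing data and vanishing forcing.
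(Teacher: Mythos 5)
Your proposal follows essentially the same two-step decoupling as the paper: solve the first-order ODE for $\eta$ with $\xi$-terms given, then feed $\eta$ into a Galerkin construction for the parabolic $\xi$-equation, and iterate; your $(\eta^k,\xi^k)\mapsto(\eta^{k+1},\xi^{k+1})$ map is precisely the paper's iteration with $\tilde{\xi}:=\xi^k$ (\S\ref{modsolproof}--\S\ref{xiimpr}). Two small remarks: the integrating-factor formula must be read along characteristics of fixed $x$ (since $s=s(x,t)$ is $t$-dependent the variable $s$ is not literally a parameter), and the term $\tfrac{\psi_s}{\psi}f\eta_s$ in the $\xi_{ss}$-estimate does not in fact need the integration-by-parts trick from \S\ref{L2detadxi} --- one simply absorbs the $1/s$ via (\ref{l/s}) into the already-controlled norm $\|\eta_s/\ell^\alpha\|_{L^2(s_{\min},s_0)}$ from (\ref{L2L2modeta_s}), which is what the paper does in \S\ref{xiimpr}.
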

Our first task is to show that the preceding theorem
actually implies Proposition \ref{itsol}.
Throughout the subsequent estimates, we
will use the symbol $C$ to denote a (sufficiently large) positive constant depending only on $n$.

\subsection{Proposition \ref{itsol} follows from Theorem \ref{modsol}}\label{itsolproof}
\noindent

By the induction hypothesis (see the beginning of the present subsection), we have at our disposal
the pointwise estimate (\ref{pm}) on the $m^{th}$ term of the sequence $(\eta^m,\xi^m)$.
Hence, for sufficiently small initial energy $\mathcal{E}_0$, we verify that
the system (\ref{itpde}) is of the form (\ref{lpde})
(see Definition \ref{f,F}) with
\begin{align}\label{F1}
F_1:=2n(n-1)\bigg[\frac{\psi^2_s}{\psi^2}
\frac{\sum^{2n}_{j=2}\binom{2n}{j}|\xi^m|^{j-2}}{(\xi^m+1)^{2n}}|\xi^m|^2
-\frac{A}{s}\xi^m\frac{\xi^m+2}{(\xi^m+1)^2}-\frac{|\xi^m_s|^2}{(\xi^m+1)^{2n+2}}\bigg]
\end{align}
and
\begin{align}\label{F2}
F_2:=&-(\frac{\psi_{ss}}{\psi}+(n-1)\frac{\psi^2_s}{\psi^2})
\frac{\sum^{2n}_{j=2}\binom{2n}{j}|\xi^m|^{j-2}}{(\xi^m+1)^{2n-1}}|\xi^m|^2
+(n-1)\frac{A}{s}\xi^m\frac{\xi^m+2}{\xi^m+1}\\
&-\frac{|\xi^m_s|^2}{(\eta^m+1)(\xi^m+1)^{2n+1}}
\notag-\frac{1}{2}\frac{\eta^m_s\xi^m_s}{(\eta^m+1)^2(\xi^m+1)^{2n}}.
\end{align}
Indeed, similarly to the derivations of the estimates in Lemma \ref{lemdFm}, using
the bounds  on the coefficients (\ref{coeff}), (\ref{coeff_s}), (\ref{coeffinfty}),
(\ref{coeff_sinfty}), (\ref{A(t)}), the $L^\infty$ estimates (\ref{pm}), (\ref{pm_s}) and
the comparison estimate (\ref{l/s}) we can derive the following estimates (for
a.e. $t\in[0,T]$, $\widetilde{C}\mathcal{E}_0<1$ and the general function $u$):
\begin{align}\label{F12est}
\|\frac{F_1}{\ell^{\alpha-1}}\|^2_{L^2}
\leq&\;
\notag C\bigg[\sigma^2\widetilde{C}\mathcal{E}_0\|\frac{\xi^m}{\ell^\alpha}\|^2_{L^2}
+\sigma A^2(t)\|\frac{\xi^m}{\ell^\alpha}\|^2_{L^2}
+\sqrt{\widetilde{C}\mathcal{E}_0}\|\frac{\xi^m_{ss}}{\ell^{\alpha-1}}\|_{L^2}
\|\frac{\xi^m_s}{\ell^{\alpha-1}}\|^2_{L^2}\bigg]\\
\|\frac{F_2}{\ell^{\alpha-1}}\|^2_{L^2}
\leq&\;C\bigg[\sigma^2\widetilde{C}\mathcal{E}_0\|\frac{\xi^m}{\ell^\alpha}\|^2_{L^2}
+\sigma A^2(t)\|\frac{\xi^m}{\ell^\alpha}\|^2_{L^2}\\
\notag&+\sqrt{\widetilde{C}\mathcal{E}_0}\|\frac{\xi^m_{ss}}{\ell^{\alpha-1}}\|_{L^2}
\big(\|\frac{\xi^m_s}{\ell^{\alpha-1}}\|^2_{L^2}
+\|\frac{\eta^m_s}{\ell^{\alpha-1}}\|^2_{L^2}\big)\bigg]
\end{align}
and
\begin{align}\label{partial_sF1est}
\notag\int\frac{u\cdot\partial_sF_1}{\ell^{2\alpha-2}}ds\leq&\;
C\sigma\|\frac{u}{\ell^\alpha}\|^2_{L^2(s_{min},s_0)}
+C\|\frac{u}{\ell^{\alpha-1}}\|^2_{L^2}
+\frac{C}{\varepsilon}\sqrt{\widetilde{C}\mathcal{E}_0}\|\frac{\xi^m_{ss}}{\ell^\alpha}\|_{L^2}
\|\frac{u}{\ell^{\alpha-1}}\|^2_{L^2}\\
&+C\big(\sigma^2\widetilde{C}\mathcal{E}_0+\sigma A^2(t)\big)\|\frac{\xi^m}{\ell^\alpha}\|^2_{L^2}\\
\notag&+C\big(\sigma\widetilde{C}\mathcal{E}_0+A^2(t)
+\sqrt{\widetilde{C}\mathcal{E}_0}\|\frac{\xi^m_{ss}}{\ell^{\alpha-1}}\|_{L^2}\big)
\|\frac{\xi^m_s}{\ell^{\alpha-1}}\|^2_{L^2}
+\varepsilon\|\frac{\xi^m_{ss}}{\ell^{\alpha-1}}\|^2_{L^2},
\end{align}
$0<\varepsilon<1$.
Integrating up to time $T$ and applying (\ref{itenest}), (\ref{intA(t)}) we confirm the first equation in (\ref{F}):
\begin{align}\label{Fest}
\sum\int^T_0\|\frac{F_i}{\ell^{\alpha-1}}\|^2_{L^2}dt
\leq C\big[\sigma^2(\widetilde{C}\mathcal{E}_0)^2T+\sigma\widetilde{C}\mathcal{E}_0o(1)
+(\widetilde{C}\mathcal{E}_0)^2\sqrt{T}\big],&&
\text{for $T$ sufficiently small.}
\end{align}
and
\begin{align}\label{G1,G2est}
\int^T_0G_1(t)dt&\leq CT+\frac{C}{\varepsilon}\widetilde{C}\mathcal{E}_0\sqrt{T},\\
\notag&\int^T_0G_2(t)dt\leq
C\big[\sigma^2(\widetilde{C}\mathcal{E}_0)^2T+\sigma\widetilde{C}\mathcal{E}_0o(1)
+(\widetilde{C}\mathcal{E}_0)^2\sqrt{T}\big]+\varepsilon\widetilde{C}\mathcal{E}_0\qquad
(\sigma>1).
\end{align}
Thus, the conclusion of Theorem \ref{modsol} yields the desired solution of (\ref{itpde})
 satisfying the assertions in Proposition \ref{itsol} and the energy estimate
\begin{align}\label{verenest}
\notag\mathcal{E}(\eta^{m+1},\xi^{m+1};T)\leq&\;
\widetilde{C}\bigg[\mathcal{E}_0+\sum\int^T_0\|\frac{F_i}{\ell^{\alpha-1}}\|^2_{L^2}dt
+\int^T_0G_2(t)dt\bigg]\\
\leq&\;\widetilde{C}\bigg[\mathcal{E}_0
+o(1)\widetilde{C}\mathcal{E}_0+\varepsilon\widetilde{C}\mathcal{E}_0\bigg]
\leq2\widetilde{C}\mathcal{E}_0,
\end{align}
provided $\varepsilon,T$ are appropriately small.

%

%
\subsection{Plan of the proof of Theorem \ref{modsol}: A new iteration}\label{modsolproof}
\noindent

Consider the system
\begin{align}\label{modlpde}
\notag\eta_t=&\;\frac{\psi^2_s}{\psi^2}f\tilde{\xi}+\frac{\psi_s}{\psi}f\tilde{\xi}_s
+\frac{A}{s}f\eta+2n(n-1)\frac{\psi^2_s}{\psi^2}\eta+F_1\\
\xi_t=&\;(\frac{\psi_{ss}}{{\psi}}+(n-1)\frac{\psi^2_s}{\psi^2})f\cdot(\eta+\xi)
+\frac{f}{\psi^2}\xi+\frac{\psi_s}{\psi}f\xi_s+g\xi_{ss}+\frac{\psi_s}{\psi}f\eta_s+F_2\\
\notag&\eta\bigg|_{t=0}=\eta_0\qquad\xi\bigg|_{t=0}=\xi_0,
\qquad\qquad\xi\bigg|_{(s_{min},t)}=\big(\xi\bigg|_{(s_0,t)},\;{\bf G}\big)=0
\qquad t\geq0,
\end{align}
where
\begin{align}\label{tildexisp}
\tilde{\xi}\in L^\infty(0,T;H^1_{\alpha,0})\cap L^2(0,T;H^2_{\alpha+1})
\end{align}
is a given function satisfying
\begin{align}\label{tildexiest1}
\|\tilde{\xi}\|^2_{L^\infty(0,T;H^1_{\alpha,0})}+\|\tilde{\xi}\|^2_{L^2(0,T;H^2_{\alpha+1})}
\leq\widetilde{C}\bigg[\mathcal{E}_0
+\sum\int^T_0\|\frac{F_i}{\ell^{\alpha-1}}\|^2_{L^2}dt+\int^T_0G_2(t)dt\bigg],
\end{align}
for some positive constant $\widetilde{C}$ to be determined later.
In addition, we assume the estimates
\begin{align}\label{tildexiest2}
\int^T_0\|\frac{\tilde{\xi}}{\ell^{\alpha+1}}\|^2_{L^2(s_{min,s_0})}dt
\leq\frac{\widetilde{C}}{\alpha\sigma^2}\bigg[\mathcal{E}_0
+\sum\int^T_0\|\frac{F_i}{\ell^{\alpha-1}}\|^2_{L^2}dt+\int^T_0G_2(t)dt\bigg]
\end{align}
and
\begin{align}\label{tildexiest3}
\int^T_0\|\frac{\tilde{\xi}_s}{\ell^\alpha}\|^2_{L^2(s_{min,s_0})}dt
\leq\frac{\widetilde{C}}{(\alpha-1)\sigma}\bigg[\mathcal{E}_0
+\sum\int^T_0\|\frac{F_i}{\ell^{\alpha-1}}\|^2_{L^2}dt+\int^T_0G_2(t)dt\bigg].
\end{align}
We {\it claim} that for appropriately large $\alpha,\sigma$ the preceding system has a unique solution
\begin{align}\label{modetaxisp}
\eta\in L^\infty(0,T;H^1_\alpha)\cap L^2(0,T;H^1_{\alpha+1})
&&\xi\in L^\infty(0,T;H^1_{\alpha,0})\cap L^2(0,T;H^2_{\alpha+1})\\
\notag\eta_t\in L^2(0,T;L^2_\alpha)&&\xi_t\in L^2(0,T;L^2_{\alpha-1}),
\end{align}
which satisfies the energy estimates
\begin{align}\label{modetaxiest}
\mathcal{E}(\eta,\xi;T)
\leq\widetilde{C}\bigg[\mathcal{E}_0+\sum\int^T_0\|\frac{F_i}{\ell^{\alpha-1}}\|^2_{L^2}dt
+\int^T_0G_2(t)dt\bigg]
\end{align}
and
\begin{align}\label{modxiest1}
\int^T_0\|\frac{\xi}{\ell^{\alpha+1}}\|^2_{L^2(s_{min,s_0})}dt
\leq\frac{\widetilde{C}}{\alpha\sigma^2}\bigg[\mathcal{E}_0
+\sum\int^T_0\|\frac{F_i}{\ell^{\alpha-1}}\|^2_{L^2}dt+\int^T_0G_2(t)dt\bigg],
\end{align}
\begin{align}\label{modxiest2}
\int^T_0\|\frac{\xi_s}{\ell^\alpha}\|^2_{L^2(s_{min,s_0})}dt
\leq\frac{\widetilde{C}}{(\alpha-1)\sigma}\bigg[\mathcal{E}_0
+\sum\int^T_0\|\frac{F_i}{\ell^{\alpha-1}}\|^2_{L^2}dt+\int^T_0G_2(t)dt\bigg].
\end{align}
Observe that if we can prove this, a standard iteration argument (passing to a subsequence, weak limits etc.) yields
a solution $\eta,\xi$ of the original linear  problem (\ref{lpde}) in the same space
(\ref{modetaxisp}) and satisfying the same estimates as above.
%
%
This reduces the proof of Theorem
 \ref{modsol} to proving  our claim above.
$\qquad\blacksquare$

\subsection{Apriori estimates for $\eta$}\label{modetaest}
\noindent

We proceed to derive estimates for the $L^2_\alpha$ norm of $\eta$.
Utilizing (\ref{partial_tds}), (\ref{l_t}) we derive
\begin{align}\label{L2eta}
\notag\frac{1}{2}\frac{d}{dt}\|\eta\|^2_{L^2_\alpha}
=&\int\frac{\eta\eta_t}{\ell^{2\alpha}}ds
-\alpha\int\frac{\eta^2}{\ell^{2\alpha+1}}\partial_t\ell ds
+\frac{1}{2}\int\frac{\eta^2}{\ell^{2\alpha}}\partial_tds\\
\leq&\int\frac{\eta\eta_t}{\ell^{2\alpha}}ds
-\alpha\sigma\|\frac{\eta}{\ell^{\alpha+1}}\|^2_{L^2(s_{min},s_0)}
+C\alpha\|\frac{\eta}{\ell^{\alpha+1}}\|^2_{L^2(s_{min},s_0)}\\
&\notag+C\alpha\|\frac{\eta}{\ell^\alpha}\|^2_{L^2}
+C\|\frac{\eta}{s\ell^\alpha}\|^2_{L^2}
\end{align}
First, employing (\ref{l/s}) we estimate the last term
\begin{align*}
\|\frac{\eta}{s\ell^\alpha}\|^2_{L^2}\leq
C\sigma\|\frac{\eta}{\ell^{\alpha+1}}\|^2_{L^2(s_{min},s_0)}
+C\|\frac{\eta}{\ell^\alpha}\|^2_{L^2(s_0,+\infty)},
\end{align*}
since $\ell=O(1)$, when $x\in(x_0,+\infty)$; see Definition \ref{weight} and (\ref{ints1}), (\ref{ints2}).
Recall the estimates of the coefficients (\ref{coeff}), (\ref{coeffinfty}), (\ref{A(t)}) .
Going back to the first equation of (\ref{modlpde}) and plugging the RHS into the integral below we obtain
\begin{align*}
\int\frac{\eta\eta_t}{\ell^{2\alpha}}ds=&
\int\frac{\eta}{\ell^{2\alpha}}\bigg[\frac{\psi^2_s}{\psi^2}f\tilde{\xi}
+\frac{\psi_s}{\psi}f\tilde{\xi}_s+\frac{A}{s}f\eta+2n(n-1)\frac{\psi^2_s}{\psi^2}\eta+F_1\bigg]ds\\
\tag{by the pointwise bound on $f$ (\ref{f})}
\leq&\;C\|\frac{\eta}{s\ell^\alpha}\|^2_{L^2}
+\|\frac{\tilde{\xi}}{s\ell^\alpha}\|^2_{L^2}
+\|\frac{\tilde{\xi}_s}{\ell^\alpha}\|^2_{L^2}\\
&+A^2(t)\|\frac{\eta}{\ell^\alpha}\|^2_{L^2}
+\|\frac{F_1}{\ell^{\alpha-1}}\|^2_{L^2}\\
\leq&\;C\sigma\|\frac{\eta}{\ell^{\alpha+1}}\|^2_{L^2(s_{min},s_0)}
+C\|\frac{\eta}{\ell^\alpha}\|^2_{L^2(s_0,+\infty)}
+C\sigma\|\frac{\tilde{\xi}}{\ell^{\alpha+1}}\|^2_{L^2(s_{min},s_0)}\\
\tag{using the comparison estimate (\ref{l/s}) once more}
&+C\|\frac{\tilde{\xi}}{\ell^\alpha}\|^2_{L^2(s_0,+\infty)}\\
&+C\|\frac{\tilde{\xi}_s}{\ell^\alpha}\|^2_{L^2}
+A^2(t)\|\frac{\eta}{\ell^\alpha}\|^2_{L^2}
+\|\frac{F_1}{\ell^{\alpha-1}}\|^2_{L^2},
\end{align*}
The last two estimates and (\ref{L2eta}) yield
\begin{align}\label{summodeta1}
&\frac{1}{2}\frac{d}{dt}\|\eta\|^2_{L^2_\alpha}
+\alpha\sigma\|\frac{\eta}{\ell^{\alpha+1}}\|^2_{L^2(s_{min},s_0)}\\
\notag\leq&\;C(\sigma+\alpha)\|\frac{\eta}{\ell^{\alpha+1}}\|^2_{L^2(s_{min},s_0)}
+\big(C\alpha+A^2(t)\big)\|\eta\|^2_{L^2_\alpha}
+C\sigma\|\frac{\tilde{\xi}}{\ell^{\alpha+1}}\|^2_{L^2(s_{min},s_0)}\\
&\notag+C\|\frac{\tilde{\xi}}{\ell^\alpha}\|^2_{L^2}
+C\|\frac{\tilde{\xi}_s}{\ell^\alpha}\|^2_{L^2}
+\|\frac{F_1}{\ell^\alpha}\|^2_{L^2},
\end{align}
Let
\begin{align*}
\frac{1}{4}\alpha>C&&\frac{1}{4}\sigma>\alpha.
\end{align*}
By our assumptions on $\tilde{\xi}$ (\ref{tildexiest1}), (\ref{tildexiest2}), (\ref{tildexiest3}),
integrating on $[0,t]$, $t\leq T$, we deduce
\begin{align}\label{summodeta2}
&\frac{1}{2}\|\eta\|^2_{L^2_\alpha}
+\frac{\alpha\sigma}{2}\int^t_0\|\frac{\eta}{\ell^{\alpha+1}}\|^2_{L^2(s_{min},s_0)}d\tau\\
\notag\leq&\;\frac{1}{2}\|\eta_0\|^2_{L^2_\alpha}
+\int^t_0\big(C\alpha+A^2(t)\big)\|\eta\|^2_{L^2_\alpha}d\tau
+C\sigma\int^T_0\|\frac{\tilde{\xi}}{\ell^{\alpha+1}}\|^2_{L^2(s_{min},s_0)}d\tau\\
\notag&+C\int^T_0\|\frac{\tilde{\xi}_s}{\ell^\alpha}\|^2_{L^2(s_{min},s_0)}d\tau
+CT\operatornamewithlimits{ess\,sup}_{\tau\in[0,T]}\big(\|\frac{\tilde{\xi}}{\ell^\alpha}\|^2_{L^2}
+\|\frac{\tilde{\xi}_s}{\ell^{\alpha-1}}\|^2_{L^2}\big)
+\int^T_0\|\frac{F_1}{\ell^\alpha}\|^2_{L^2}d\tau\\
\notag\leq&\;\int^t_0\big(C\alpha+A^2(t)\big)\|\eta\|^2_{L^2_\alpha}d\tau
+C\frac{\widetilde{C}}{\alpha\sigma}\bigg[\mathcal{E}_0
+\sum\int^T_0\|\frac{F_i}{\ell^{\alpha-1}}\|^2_{L^2}d\tau
+\int^T_0G_2(\tau)d\tau\bigg],
\end{align}
provided $\widetilde{C}$ is large enough and $T$ small.
Thus, by Gronwall's inequality and (\ref{intA(t)}), $t\in[0,T]$, we conclude ($\alpha$ large)
\begin{align}\label{Linftymodeta}
\|\eta\|^2_{L^\infty(0,T;L^2_\alpha(s))}
\leq\frac{\widetilde{C}}{10}\bigg[\mathcal{E}_0
+\sum\int^T_0\|\frac{F_i}{\ell^{\alpha-1}}\|^2_{L^2}d\tau
+\int^T_0G_2(\tau)d\tau\bigg]
\end{align}
and
\begin{align}\label{L2L2modeta}
\int^T_0\|\frac{\eta}{\ell^{\alpha+1}}\|^2_{L^2(s_{min},s_0)}d\tau
\leq \frac{\widetilde{C}}{\alpha\sigma^2}\bigg[\mathcal{E}_0
+\sum\int^T_0\|\frac{F_i}{\ell^{\alpha-1}}\|^2_{L^2}d\tau
+\int^T_0G_2(\tau)d\tau\bigg].
\end{align}

Now we continue to the $L^2_{\alpha-1}$ energy estimates
of $\eta_s$.
We are going to need certain $L^\infty$ estimates.
Similarly to the derivation of (\ref{pm}), by the energy estimate of $\tilde{\xi}$
(\ref{tildexiest1})
we can obtain the estimates (for a.e. $t\in[0,T]$)
\begin{align}\label{ptildexi}
\|\frac{\tilde{\xi}}{\ell^k}\|^2_{L^\infty(s)}\leq (k+1)C\widetilde{C}
\bigg[\mathcal{E}_0+\sum\int^T_0\|\frac{F_i}{\ell^{\alpha-1}}\|^2_{L^2}dt
+\int^T_0G_2(t)dt\bigg]
\end{align}
and
\begin{align}\label{ptildexi_s}
\|\frac{\tilde{\xi}_s}{\ell^{k-1}}\|^2_{L^\infty(s)}
\leq C\|\frac{\tilde{\xi}_{ss}}{\ell^{\alpha-1}}\|^2_{L^2}
+kC\widetilde{C}
\bigg[\mathcal{E}_0+\sum\int^T_0\|\frac{F_i}{\ell^{\alpha-1}}\|^2_{L^2}dt
+\int^T_0G_2(t)dt\bigg],
\end{align}
for every $k\leq\alpha-1$. Moreover, the same argument combined with the above estimate
(\ref{Linftymodeta}) yields
\begin{align}\label{pmodeta}
\|\frac{\eta}{\ell^k}\|^2_{L^\infty(s)}\leq C\|\frac{\eta_s}{\ell^k}\|^2_{L^2}
+(k+1)C\widetilde{C}\bigg[\mathcal{E}_0+\sum\int^T_0\|\frac{F_i}{\ell^{\alpha-1}}\|^2_{L^2}dt
+\int^T_0G_2(t)dt\bigg],
\end{align}
provided $k\leq\alpha-1$. Employing
(\ref{l_t}), (\ref{[s,t]}) and (\ref{partial_tds}) we derive
\begin{align}\label{L2eta_s}
\notag\frac{1}{2}\frac{d}{dt}\|\eta_s\|^2_{L^2_{\alpha-1}}
=&\int\frac{\eta_s\partial_t\eta_s}{\ell^{2\alpha-2}}ds
-(\alpha-1)\int\frac{\eta^2_s}{\ell^{2\alpha-1}}\partial_t\ell ds
+\frac{1}{2}\int\frac{\eta^2}{\ell^{2\alpha-2}}\partial_tds\\
\leq&\int\frac{\eta_s\partial_s\eta_t}{\ell^{2\alpha-2}}ds
+C\|\frac{\eta_s}{s\ell^{\alpha-1}}\|^2_{L^2}
-(\alpha-1)\sigma\|\frac{\eta_s}{\ell^\alpha}\|^2_{L^2(s_{min},s_0)}\\
\notag&+C\alpha\|\frac{\eta_s}{\ell^\alpha}\|^2_{L^2(s_{min},s_0)}
+C\alpha\|\frac{\eta_s}{\ell^{\alpha-1}}\|^2_{L^2}
+C\|\frac{\eta_s}{s\ell^{\alpha-1}}\|^2_{L^2}
\end{align}
As before, by (\ref{l/s}) it follows
\begin{align*}
\|\frac{\eta_s}{s\ell^{\alpha-1}}\|^2_{L^2}\leq
C\sigma\|\frac{\eta_s}{\ell^\alpha}\|^2_{L^2(s_{min},s_0)}
+C\|\frac{\eta_s}{\ell^\alpha}\|^2_{L^2(s_0,+\infty)}
\end{align*}
In order to estimate the main term we recall additionally the estimates on
the derivatives of the coefficients (\ref{coeff_s}),
(\ref{coeff_sinfty}). We will skip the detailed derivations for the following estimate,
since it is of similar form (only simpler) to the one
carefuly claimed for the term (\ref{partial_sdeta_t}). In the same spirit we
deduce (for a.e. $t\in[0,T]$)
\begin{align*}
\tag{plugging in the RHS of the first equation (\ref{modlpde})}
&\int\frac{\eta_s\partial_s\eta_t}{\ell^{2\alpha-2}}ds\\
&\int\frac{\eta_s}{\ell^{2\alpha-2}}\partial_s\bigg[
\frac{\psi^2_s}{\psi^2}f\tilde{\xi}+\frac{\psi_s}{\psi}f\tilde{\xi}_s
+\frac{A}{s}f\eta+2n(n-1)\frac{\psi^2_s}{\psi^2}\eta+F_1
\bigg]\\
\tag{$0<\varepsilon<1$}
\leq&\;\frac{C}{\varepsilon}\|\frac{\eta_s}{s\ell^{\alpha-1}}\|^2_{L^2}
+\|\frac{\tilde{\xi}}{s^2\ell^{\alpha-1}}\|^2_{L^2}
+\|\frac{\tilde{\xi}}{s}\|^2_{L^\infty(s)}\|\frac{f_s}{\ell^{\alpha-1}}\|^2_{L^2}
+\|\frac{\tilde{\xi}_s}{s\ell^{\alpha-1}}\|^2_{L^2}\\
&+\varepsilon\|\tilde{\xi}_s\|^2_{L^\infty(s)}\|\frac{f_s}{\ell^{\alpha-1}}\|^2_{L^2}
+\varepsilon\|\frac{\tilde{\xi}_{ss}}{\ell^{\alpha-1}}\|^2_{L^2}
+A^2(t)\|\frac{\eta}{s\ell^{\alpha-1}}\|^2_{L^2}
+A^2(t)\|\eta\|^2_{L^\infty(s)}\|\frac{f_s}{\ell^{\alpha-1}}\|^2_{L^2}\\
&+A^2(t)\|\frac{\eta_s}{\ell^{\alpha-1}}\|^2_{L^2}
+\|\frac{\eta}{s^2\ell^{\alpha-1}}\|^2_{L^2}
+\int\frac{\eta_s\cdot\partial_sF_1}{\ell^{2\alpha-2}}ds
\end{align*}
Now we apply the comparison estimate (\ref{l/s}), the $L^\infty$ estimates
(\ref{ptildexi}), (\ref{ptildexi_s}), (\ref{pmodeta}) and the assumptions
(\ref{f}), (\ref{F}) in Definition \ref{f,F} to finally obtain from (\ref{L2eta_s})
\begin{align}\label{summodeta_s1}
&\frac{1}{2}\frac{d}{dt}\|\eta_s\|^2_{L^2_{\alpha-1}}
+(\alpha-1)\sigma\|\frac{\eta_s}{\ell^\alpha}\|^2_{L^2(s_{min},s_0)}\\
\notag\leq&\;(\frac{C}{\varepsilon}\sigma+C\alpha)\|\frac{\eta_s}{\ell^\alpha}\|^2_{L^2(s_{min},s_0)}
+\big[\frac{C}{\varepsilon}+C\alpha+CA^2(t)+G_1(t)\big]\|\eta_s\|^2_{L^2_{\alpha-1}}\\
&+C\sigma^2\|\frac{\eta}{\ell^{\alpha+1}}\|^2_{L^2(s_{min},s_0)}
\notag+\big[C+C\sigma A^2(t)\big]\|\eta\|^2_{L^2_\alpha}
+C\sigma^2\|\frac{\tilde{\xi}}{\ell^{\alpha+1}}\|^2_{L^2(s_{min},s_0)}
+C\|\frac{\tilde{\xi}}{\ell^\alpha}\|^2_{L^2}\\
&+C\sigma\|\frac{\tilde{\xi}_s}{\ell^\alpha}\|^2_{L^2(s_{min},s_0)}
\notag+C\|\frac{\tilde{\xi}_s}{\ell^{\alpha-1}}\|^2_{L^2}
+C\varepsilon\|\frac{\tilde{\xi}_{ss}}{\ell^{\alpha-1}}\|^2_{L^2}\\
&\notag+G_2(t)+\big[C\sigma+CA^2(t)\big]\widetilde{C}
\bigg[\mathcal{E}_0+\sum\int^T_0\|\frac{F_i}{\ell^{\alpha-1}}\|^2_{L^2}dt
+\int^T_0G_2(t)dt\bigg]
\end{align}
Let
\begin{align*}
\frac{1}{4}(\alpha-1)>\frac{C}{\varepsilon}&&\frac{1}{4}\sigma>C\frac{\alpha}{\alpha-1}.
\end{align*}
Then integrating on $[0,t]$, $t\leq T$, we derive
\begin{align}\label{summodeta_s2}
&\frac{1}{2}\|\eta_s\|^2_{L^2_{\alpha-1}}
+\frac{1}{2}(\alpha-1)\sigma\int^t_0\|\frac{\eta_s}{\ell^\alpha}\|^2_{L^2(s_{min},s_0)}d\tau\\
\notag\leq&\;\frac{1}{2}\|\partial_x\eta_0\|^2_{L^2_{\alpha-1}}
+\int^t_0\big[\frac{C}{\varepsilon}+C\alpha+CA^2(t)+G_1(t)\big]\|\eta_s\|^2_{L^2_{\alpha-1}}d\tau\\
\notag&+C\sigma^2\int^T_0\|\frac{\eta}{\ell^{\alpha+1}}\|^2_{L^2(s_{min},s_0)}d\tau
+\bigg(CT+C\int^T_0A^2(t)d\tau\bigg)\|\eta\|^2_{L^\infty(0,T;L^2_\alpha(s))}\\
&\notag+C\sigma^2\int^T_0\|\frac{\tilde{\xi}}{\ell^{\alpha+1}}\|^2_{L^2(s_{min},s_0)}d\tau
+C\sigma\int^T_0\|\frac{\tilde{\xi}_s}{\ell^\alpha}\|^2_{L^2(s_{min},s_0)}d\tau\\
&+CT\operatornamewithlimits{ess\,sup}_{\tau\in[0,T]}\big(\|\frac{\tilde{\xi}}{\ell^\alpha}\|^2_{L^2}
+\|\frac{\tilde{\xi}_s}{\ell^{\alpha-1}}\|^2_{L^2}\big)
\notag+C\varepsilon\int^T_0\|\frac{\tilde{\xi}_{ss}}{\ell^{\alpha-1}}\|^2_{L^2}d\tau
+\int^T_0G_2(\tau)d\tau\\
&\notag+\bigg(CT+C\int^T_0A^2(t)d\tau\bigg)\widetilde{C}
\bigg[\mathcal{E}_0+\sum\int^T_0\|\frac{F_i}{\ell^{\alpha-1}}\|^2_{L^2}d\tau
+\int^T_0G_2(\tau)d\tau\bigg]
\end{align}
Recall our assumptions
on $\tilde{\xi}$ (\ref{tildexiest1}), (\ref{tildexiest2}), (\ref{tildexiest3}).
Utilizing (\ref{intA(t)})
and the derived estimates (\ref{Linftymodeta}), (\ref{L2L2modeta}) we conclude that
\begin{align}\label{summodeta_s3}
&\frac{1}{2}\|\eta_s\|^2_{L^2_{\alpha-1}}
+\frac{1}{2}(\alpha-1)\sigma\int^t_0\|\frac{\eta_s}{\ell^\alpha}\|^2_{L^2(s_{min},s_0)}d\tau\\
\notag\leq&\;\frac{1}{2}\|\partial_x\eta_0\|^2_{L^2_{\alpha-1}}
+\int^t_0\big[\frac{C}{\varepsilon}+C\alpha+CA^2(t)+G_1(t)\big]\|\eta_s\|^2_{L^2_{\alpha-1}}d\tau\\
\notag&+\int^T_0G_2(\tau)d\tau
+\big(o(1)+C\varepsilon+\frac{C}{\alpha}\big)\widetilde{C}
\bigg[\mathcal{E}_0+\sum\int^T_0\|\frac{F_i}{\ell^{\alpha-1}}\|^2_{L^2}d\tau
+\int^T_0G_2(\tau)d\tau\bigg],
\end{align}
 Thus, for $\varepsilon,T$ appropriately small, $\widetilde{C}$
and $\alpha$ large, it follows from Gronwall's inequality that
\begin{align}\label{Linftymodeta_s}
\|\eta_s\|^2_{L^\infty(0,T;L^2_{\alpha-1}(s))}
\leq\frac{\widetilde{C}}{10}\bigg[\mathcal{E}_0
+\sum\int^T_0\|\frac{F_i}{\ell^{\alpha-1}}\|^2_{L^2}d\tau+\int^T_0G_2(\tau)d\tau\bigg]
\end{align}
and
\begin{align}\label{L2L2modeta_s}
\int^T_0\|\frac{\eta_s}{\ell^\alpha}\|^2_{L^2(s_{min},s_0)}d\tau
\leq\frac{\widetilde{C}}{(\alpha-1)\sigma}\bigg[\mathcal{E}_0
+\sum\int^T_0\|\frac{F_i}{\ell^{\alpha-1}}\|^2_{L^2}d\tau+\int^T_0G_2(\tau)d\tau\bigg].
\end{align}
\subsection{Energy estimates for $\xi$: A Galerkin-type argument}\label{Gal}
\noindent

Now that we have solved the first equation of (\ref{modlpde}) for $\eta$ and obtained the required
regularity  estimates for this solution, we plug it into the second equation of the system
(\ref{modlpde}). We initially seek a weak solution of (\ref{modlpde})
\begin{align*}
\xi\in L^\infty(0,T;L^2_\alpha(s))\cap L^2(0,T;H^1_{\alpha+1,0}(s))&&
\ell^2\xi_t\in L^2(0,T;H^{-1}_{\alpha+1}(s)),
\end{align*}
($H^{-1}_{\alpha+1}$ being the dual of $H^1_{\alpha+1,0}$), satisfying
\begin{align}\label{xi_t}
\notag\int^T_0\big(\xi_t,v\big)_{L^2_\alpha}dt=&\int^T_0
\bigg[\big((\frac{\psi_{ss}}{{\psi}}+(n-1)\frac{\psi^2_s}{\psi^2})f\cdot(\eta+\xi),v\big)_{L^2_\alpha}
+\big(\frac{\psi_s}{\psi}f\xi_s,v\big)_{L^2_\alpha}\\
&-\big(g_s\xi_s,v\big)_{L^2_\alpha}-\big(g\xi_s,v_s\big)_{L^2_\alpha}
+2\alpha\big(g\xi_s,v\frac{\ell_s}{\ell}\big)_{L^2_\alpha}\\
\notag&+\big(\frac{\psi_s}{\psi}f\eta_s,v\big)_{L^2_\alpha}+\big(F_2,v\big)_{L^2_\alpha}\bigg]dt\\
\notag\xi\bigg|_{t=0}=\xi_0\qquad&\qquad\qquad\xi\bigg|_{(s_{min},t)}=\big(\xi\bigg|_{(s_0,t)},\;{\bf G}\big)=0
\qquad t\in[0,T],
\end{align}
for all
\begin{align}
v\in L^\infty(0,T;H^1_{\alpha,0}(s))\cap L^2(0,T;H^1_{\alpha+1}(s)),
\end{align}
where by $\big(\cdot,\cdot\big)_{L^2_\alpha}$ we denote the inner product in $H^0_\alpha$
\begin{align}\label{innprod}
\big(v_1,v_2\big)_{L^2_{\alpha}}:=\int\frac{v_1v_2}{\ell^{2\alpha}}ds.
\end{align}
We prove the existence of a (weak) solution to (\ref{xi_t}) in the energy spaces we claimed.
(Uniqueness of the strong solution will follow readily from the energy estimates we establish along the way).
Let $\{u_k(x)\}_{k=1}^\infty$ be an orthonormal basis of $L^2\big((0,+\infty)\big)$, which is also a basis
of $H^1_0\big((0,+\infty)\big)$ $\big[(0,\delta)$, {\bf G}\big]. Then, for each $t\in[0,T]$,
\begin{align}\label{basis}
w_k(s,t):=\ell^\alpha u_k(s-s_{min})&&\bigg[\ell^\alpha u_k\big(\frac{\delta(s-s_{min})}{s_0-s_{min}}\big),\;\;{\bf G}\bigg]
\end{align}
$k=1,2,\ldots$ is an orthonormal basis of $L^2_\alpha(s)$ $\big[$up to uniform rescalling ($t$ fixed), {\bf G}\big]
and a basis of $H^1_{\alpha,0}$. We note that by the comparison (\ref{l/s}) and (\ref{sgeq})
\begin{align}\label{L2L2w}
\int^T_0\int\frac{1}{\ell^2}dsdt\leq C\int^T_0\int\frac{1}{s^2}ds\leq C\int^T_0\frac{1}{s_{min}}ds\leq
C\int^T_0\frac{1}{\sqrt{t}}dt=o(1)<+\infty,
\end{align}
as $T\rightarrow0^+$. From this we derive that the set of functions
\begin{align}\label{L2L2dense}
\text{span}\big\{d_k(t)w_k(s,t)\;\big|\;t\in[0,T],\;k=1,2\ldots\big\},
\end{align}
$d_k(t)$ smooth, is dense in $L^2(0,T;H^1_{\alpha+1,0}(s))$.
Recall that by (\ref{l_s}) we have $\partial_s \ell=O(1)$ and hence, similarly to (\ref{L2L2w})
we verify the asymptotics (as $t\rightarrow0^+$)
\begin{align}\label{wasym1}
\int\frac{w_{k_1}w_{k_2}}{s^2\ell^{2\alpha}}ds=O(\frac{1}{\sqrt{t}})&&
\int\frac{\partial_sw_{k_1}w_{k_2}}{s\ell^{2\alpha}}ds=O(\frac{1}{\sqrt{t}})&&
\int\frac{\partial_sw_{k_1}\partial_sw_{k_2}}{\ell^{2\alpha}}ds=O(\frac{1}{\sqrt{t}}),
\end{align}
without of course  any uniformity in the RHSs with respect to the indices $k_1,k_2\in\{1,2,\ldots\}$.
Further, by the definition of our basis (\ref{basis}) we also find
\begin{align}\label{wasym2}
\int\frac{\partial_tw_{k_1}w_{k_2}}{\ell^{2\alpha}}ds=O(\frac{1}{\sqrt{t}}),
\end{align}
since $\partial_t\ell=O(\ell^{-1})$, as well as $\partial_ts=O(s^{-1}+1)$; see (\ref{l_t}), (\ref{s_t}), (\ref{s_tinfty}).

Now, given a $\nu\in\{1,2,\ldots\}$, we construct Galerkin approximations of the solution of (\ref{xi_t}), which lie
in the span of the first $\nu$ basis elements:
\begin{align}\label{xinu}
\xi^\nu:=\sum_{k=1}^\nu a_k(t)w_k&&a_k(0):=\int\frac{\xi_0w_k(x,0)}{x^{2\alpha}}dx
\end{align}
solving
\begin{align}\label{xigal}
\notag\big(\xi^\nu_t,w_k\big)_{L^2_\alpha}=&\;
\big((\frac{\psi_{ss}}{{\psi}}+(n-1)\frac{\psi^2_s}{\psi^2})f\cdot(\eta+\xi^\nu),w_k\big)_{L^2_\alpha}
+\big(\frac{\psi_s}{\psi}f\xi^\nu_s,w_k\big)_{L^2_\alpha}\\
&-\big(g_s\xi^\nu_s,w_k\big)_{L^2_\alpha}-\big(g\xi^\nu_s,\partial_sw_k\big)_{L^2_\alpha}
+2\alpha\big(g\xi^\nu_s,w_k\frac{\ell_s}{\ell}\big)_{L^2_\alpha}\\
\notag&+\big(\frac{\psi_s}{\psi}f\eta_s,w_k\big)_{L^2_\alpha}
+\big(F_2,w_k\big)_{L^2_\alpha},
\end{align}
for $t\in[0,T]$ and every $k=1,\ldots,\nu$.
\begin{proposition}\label{exgal}
For each $\nu=1,2,\ldots$ there exists a unique function $\xi^\nu$ of the form (\ref{xinu})
satisfying (\ref{xigal}).
\end{proposition}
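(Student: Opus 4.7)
The plan is to recognise (\ref{xigal}) as a linear first-order ODE system for the coefficient vector $(a_1(t),\ldots,a_\nu(t))$ and apply the classical Carath\'eodory existence--uniqueness theorem. Substituting the ansatz $\xi^\nu=\sum_{j=1}^\nu a_j(t)w_j(s,t)$ into (\ref{xigal}) and using that $\{w_k(\cdot,t)\}_{k=1}^\nu$ is orthonormal in $L^2_\alpha(s)$ at each fixed $t$ (orthogonal with a bounded, positive, time-dependent normalisation $N(t)=(s_0-s_{\min})/\delta$ in the {\bf G} case, which can be divided through), the time-derivative term expands as
\begin{align*}
\bigl(\xi^\nu_t,w_k\bigr)_{L^2_\alpha}=\dot a_k(t)+\sum_{j=1}^\nu a_j(t)\bigl(\partial_tw_j,w_k\bigr)_{L^2_\alpha},
\end{align*}
and every remaining inner product on the right-hand side of (\ref{xigal}) becomes a linear combination of the $a_j(t)$ with time-dependent coefficients. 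The full system thus takes the canonical form
\begin{align*}
\dot a_k(t)=\sum_{j=1}^\nu M_{kj}(t)\,a_j(t)+f_k(t),\qquad a_k(0)=\bigl(\xi_0,w_k(\cdot,0)\bigr)_{L^2_\alpha},\qquad k=1,\ldots,\nu.
\end{align*}

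The next step is to verify $M_{kj},f_k\in L^1(0,T)$, which for a linear system suffices for global existence and uniqueness of an absolutely continuous solution on all of $[0,T]$. Choosing the basis $\{u_k\}$ of $L^2(0,+\infty)$ to consist of compactly supported smooth functions, all the spatial integrals defining $M_{kj}$ reduce to integrals over a bounded region translated by $s_{\min}(t)$. The most singular entries pair a $1/s^2$ PDE coefficient (such as $\frac{\psi_s^2}{\psi^2}$, cf.\ (\ref{coeff})) with two basis factors, or come from the commutator term $(\partial_t w_j, w_k)_{L^2_\alpha}$; both are controlled by the $O(t^{-1/2})$ pointwise bounds of (\ref{wasym1})--(\ref{wasym2}), which in turn exploit the lower bound $s_{\min}(t)\geq c\sqrt{t}$ from (\ref{sgeq}) and the weight comparison (\ref{l/s}). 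Since $t\mapsto t^{-1/2}$ is integrable on $[0,T]$, this yields $M_{kj}\in L^1(0,T)$. For the forcing $f_k$---consisting of the $\eta$-, $\eta_s$- and $F_2$-pairings against $w_k$ weighted by $1/\ell^{2\alpha}$---one splits the weight as $1/\ell^{\alpha-1}\cdot 1/\ell^{\alpha+1}$ and applies Cauchy--Schwarz in $s$, then in $t$; the $\eta$- and $\eta_s$-terms are bounded using the regularity $\eta\in L^\infty(0,T;H^1_\alpha)$, $\eta_s\in L^\infty(0,T;L^2_{\alpha-1})$ already established in \S\ref{modetaest}, while the $F_2$-term uses the hypothesis $F_2/\ell^{\alpha-1}\in L^2(0,T;L^2(s))$ from (\ref{F}), yielding $f_k\in L^1(0,T)$.

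With these integrability properties in hand, the classical Carath\'eodory existence--uniqueness theorem for linear ODE systems with Lebesgue-integrable coefficients produces a unique absolutely continuous solution $(a_1,\ldots,a_\nu)\colon[0,T]\to\R^\nu$ with the prescribed initial data, and hence a unique $\xi^\nu=\sum_{k=1}^\nu a_k w_k$ of the required form. The main technical point demanding care is tracking the time-singularity of $M_{kj}(t)$ arising both from the most singular PDE coefficients and from the time-dependence of the basis via $s_{\min}(t)$---exactly the content of (\ref{wasym1})--(\ref{wasym2}). Crucially, no uniformity in $\nu$ is required here: constants may depend freely on $\nu$ and on the fixed basis $u_1,\ldots,u_\nu$, since the uniform-in-$\nu$ estimates needed to pass to a weak limit in the full Galerkin scheme will be the business of subsequent steps, not of this proposition.
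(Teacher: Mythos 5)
Your argument is correct and essentially the same as the paper's: both convert (\ref{xigal}) via orthonormality of $\{w_k\}$ into a $\nu\times\nu$ linear ODE system for the coefficients $a_k(t)$, verify that the system matrix and the forcing are $L^1$ in time via the $O(t^{-1/2})$ asymptotics (\ref{wasym1})--(\ref{wasym2}) together with the regularity of $\eta$, $\eta_s$, $F_2$, and then invoke the standard existence-uniqueness theory for linear ODEs with Lebesgue-integrable coefficients (which you correctly identify as Carath\'eodory's theorem). Your additional restriction of the basis $\{u_k\}$ to compactly supported smooth functions is harmless but not needed: the paper's hypothesis that $\{u_k\}$ also be a basis of $H^1_0$ already gives $u_k\in L^\infty$ by one-dimensional Sobolev embedding, which is what is actually used in deriving (\ref{wasym1})--(\ref{wasym2}).
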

\begin{proof}
Fix a $k\in\{1,\ldots,\nu\}$. We express (\ref{xigal}) as a linear ODE system in time
and estimate its coefficients by plugging into the equation the formula of $\xi^\nu$ (\ref{xinu})
and of the basic functions (\ref{basis}).
First, employing (\ref{wasym2}) we derive
\begin{align*}
\big(\xi^\nu_t,w_k\big)_{L^2_\alpha}=
a'_k(t)+\sum^\nu_{j=1}a_j(t)O_{j,k}(\frac{1}{\sqrt{t}}).
\end{align*}
Next, utilizing the estimates on the coefficients (\ref{coeff}), (\ref{coeffinfty}), (\ref{f})
and the above asymptotics (\ref{wasym1}) we can write
\begin{align*}
\big((\frac{\psi_{ss}}{{\psi}}+(n-1)\frac{\psi^2_s}{\psi^2})f\cdot\xi^\nu,w_k\big)_{L^2_\alpha}
+\big(\frac{\psi_s}{\psi}f\xi^\nu_s,w_k\big)_{L^2_\alpha}=\sum_{j=1}^\nu a_j(t)O_{j,k}(\frac{1}{\sqrt{t}}).
\end{align*}
Recall our assumption $w_k\in H^1_{\alpha,0}$ and apply (\ref{wasym1}) once more to estimate
\begin{align*}
-\big(g_s\xi^\nu_s,w_k\big)_{L^2_\alpha}-\big(g\xi^\nu_s,\partial_sw_k\big)_{L^2_\alpha}
&+2\alpha\big(g\xi^\nu_s,w_k\frac{\ell_s}{\ell}\big)_{L^2_\alpha}\\
&=\sum_{j=1}^\nu a_k(t)O_{j,k}(1)+\sum_{j=1}^\nu a_k(t)O_{j,k}(\frac{1}{\sqrt{t}}),
\end{align*}
where we used the estimates (\ref{f}) of $g$ and the fact $\partial_s\ell=O(1)$; see (\ref{l_s}).
For the last three terms of the RHS of (\ref{xigal}) we set (for a.e. $t\in[0,T]$)
\begin{align*}
d_k(t):=&\;\big((\frac{\psi_{ss}}{{\psi}}+(n-1)\frac{\psi^2_s}{\psi^2})f\cdot\eta,w_k\big)_{L^2_\alpha}
+\big(\frac{\psi_s}{\psi}f\eta_s,w_k\big)_{L^2_\alpha}+\big(F_2,w_k\big)_{L^2_\alpha}\\
\leq&\;C\|\frac{\eta}{s\ell^\alpha}\|^2_{L^2}+\int\frac{1}{s^2}ds
+C\|\frac{\eta_s}{\ell^\alpha}\|^2_{L^2}+\int\frac{1}{s^2}ds
+C\|\frac{F_2}{\ell^{\alpha-1}}\|^2_{L^2}+\int\frac{1}{\ell^2}ds
\end{align*}
Observe that $d_k(t)$ is integrable in time $t$: Indeed, recall our remark above (\ref{L2L2w})
and our regularity assumption on $F_2$ (\ref{F}). The $\eta$ terms are of course $t-$integrable
from the estimates we derived in \S\ref{modetaest}.

According to the above, (\ref{xigal}) reduces to a linear first order ODE system of the form
\begin{align*}
a'_k(t)=\sum^\nu_{j=1}a_k(t)O_{j,k}(\frac{1}{\sqrt{t}})+
\sum^\nu_{j=1}a_k(t)O_{j,k}(1)+d_k(t)&&k=1,\ldots,\nu
\end{align*}
The coefficients of the preceding system are thus singular at $t=0$,
but they are all integrable on $[0,T]$. This implies local existence
and uniqueness of the system and hence of Galerkin approximations (\ref{xinu}), (\ref{xigal}) of
(\ref{xi_t}) at each step $\nu\in\{1,2,\ldots\}$.
\end{proof}
We proceed to derive energy estimates for the approximate solutions $\xi^\nu$.
\begin{proposition}\label{enestxinu}
There exist $\alpha,\sigma$ appropriately large, such that
for every $\nu=1,2,\ldots$ the following estimates hold:
\begin{align}\label{Linftyxinu}
\|\xi^\nu\|^2_{L^\infty(0,T;L^2_\alpha(s))}
+\int^T_0\|\frac{\xi^\nu_s}{\ell^\alpha}&\|^2_{L^2}dt\\
\notag\leq&\;\frac{\widetilde{C}}{10}\bigg[\mathcal{E}_0
+\sum\int^T_0\|\frac{F_i}{\ell^{\alpha-1}}\|^2_{L^2}dt+\int^T_0G_2(t)dt\bigg]
\end{align}
and
\begin{align}\label{L2L2xinu}
\int^T_0\|\frac{\xi^\nu}{\ell^{\alpha+1}}\|^2_{L^2(s_{min},s_0)}dt
\leq\frac{\widetilde{C}}{\alpha\sigma^2}\bigg[\mathcal{E}_0
+\sum\int^T_0\|\frac{F_i}{\ell^{\alpha-1}}\|^2_{L^2}dt+\int^T_0G_2(t)dt\bigg],
\end{align}
for some $T>0$ independent $\nu$. In addition,
\begin{align}\label{L2xinu_t}
\bigg(\int^T_0\big(\xi_t^\nu,v&\big)_{L^2_\alpha}dt\bigg)^2\\
\notag\leq&\;\frac{\widetilde{C}}{10}\bigg[\mathcal{E}_0
+\sum\int^T_0\|\frac{F_i}{\ell^{\alpha-1}}\|^2_{L^2}dt+\int^T_0G_2(t)dt\bigg]\int^T_0\|v\|^2_{H^1_{\alpha+1},0}dt,
\end{align}
for all $v=\sum_{k=1}^\nu d_k(t)w_k$.
\end{proposition}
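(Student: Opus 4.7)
\noindent\textbf{Proof proposal for Proposition \ref{enestxinu}.}

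The plan is to derive the energy identity for $\xi^\nu$ in $L^2_\alpha$ in direct parallel with the derivations of \S\ref{L2detadxi}, and then to close with Gronwall. First I would multiply the Galerkin equation (\ref{xigal}) by $a_k(t)$ and sum over $k=1,\ldots,\nu$. Since $\xi^\nu=\sum_k a_k w_k$, the LHS becomes $(\xi^\nu_t,\xi^\nu)_{L^2_\alpha}$, which by (\ref{l_t}) and (\ref{partial_tds}) can be rewritten as
\begin{align*}
(\xi^\nu_t,\xi^\nu)_{L^2_\alpha}=\tfrac{1}{2}\tfrac{d}{dt}\|\xi^\nu\|^2_{L^2_\alpha}
+\alpha\int\frac{|\xi^\nu|^2}{\ell^{2\alpha+1}}\partial_t\ell\,ds
-\tfrac{1}{2}\int\frac{|\xi^\nu|^2}{\ell^{2\alpha}}\partial_tds.
\end{align*}
The $\sigma$-contribution in $\partial_t\ell$ extracts, exactly as in (\ref{L2dxi}), a \emph{negative} critical term $-\alpha\sigma\|\xi^\nu/\ell^{\alpha+1}\|^2_{L^2(s_{min},s_0)}$ on the LHS, whose presence is indispensable to absorb the singular coefficients of the RHS.

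Next I would control the terms on the RHS of (\ref{xigal}) after summation. The key piece is the parabolic one: integrating by parts using the vanishing of $\xi^\nu$ at $s=s_{min}$ (and at $s=s_0$ in the {\bf G} case, or at infinity by (\ref{basis}) and the density of compactly supported functions), one rewrites the $g$-term as $-\int g|\xi^\nu_s|^2/\ell^{2\alpha}\,ds+\{\text{lower order}\}$; the lower bound $g>\tfrac12$ from (\ref{f}) yields a negative $-\tfrac12\|\xi^\nu_s/\ell^\alpha\|^2_{L^2}$ that provides the required $H^1$ control. All remaining terms are handled exactly as in \S\ref{L2detadxi}: the asymptotics (\ref{coeff}), (\ref{coeffinfty}) of the coefficients $\psi_{ss}/\psi,\,\psi_s^2/\psi^2,\,\psi_s/\psi$ are paired with the comparison estimate (\ref{l/s}) to distribute the $1/s$ powers between the $\xi^\nu$-factors (now absorbed into the negative critical term on the LHS) and the $\eta, \eta_s, F_2$-factors (controlled by the already-proved estimates (\ref{Linftymodeta}), (\ref{L2L2modeta}), (\ref{Linftymodeta_s}), (\ref{L2L2modeta_s}) together with the hypothesis (\ref{F}) on $F_2$). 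The integration by parts of $g\xi^\nu_s$ produces derivatives falling on $g$ (controlled by $\tilde\varepsilon$ in (\ref{f})) and on $\ell^{-2\alpha}$ (yielding a factor $\alpha\ell_s/\ell$ absorbed into the critical term by (\ref{l_s}) and (\ref{l/s})). Choosing $\alpha,\sigma$ large with $\alpha\sigma\gg\alpha^2$, and $T$ small so that $\widetilde{C}\mathcal{E}_0<1$ and $\int_0^T A^2(\tau)d\tau=o(1)$, the resulting differential inequality has the form
\begin{align*}
\tfrac{d}{dt}\|\xi^\nu\|^2_{L^2_\alpha}+\tfrac{\alpha\sigma}{2}\|\tfrac{\xi^\nu}{\ell^{\alpha+1}}\|^2_{L^2(s_{min},s_0)}
+\tfrac14\|\tfrac{\xi^\nu_s}{\ell^\alpha}\|^2_{L^2}\le B(t)\|\xi^\nu\|^2_{L^2_\alpha}+R(t),
\end{align*}
where $B(t)$ is $t$-integrable and $R(t)$ is bounded by the RHS of (\ref{Linftyxinu}). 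A standard Gronwall argument then gives both (\ref{Linftyxinu}) and (\ref{L2L2xinu}), with $T$ independent of $\nu$ since all constants depend only on $n,\alpha,\sigma$, the hypotheses on $f,g,F_i$, and the $\eta$-estimates of \S\ref{modetaest}.

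For (\ref{L2xinu_t}), fix an admissible $v=\sum_{k=1}^\nu d_k(t)w_k$ and pair (\ref{xigal}) with $d_k(t)$, summing over $k$. Each RHS term is estimated by Cauchy--Schwarz in weighted $L^2$: the potential terms by $\|\frac{\psi_s^2}{\psi^2}f\|_{L^\infty}\|\xi^\nu/\ell^{\alpha+1}\|_{L^2}\|v/\ell^{\alpha-1}\|_{L^2}$, the $\xi^\nu_s$ and $\eta_s$ terms by the analogous bounds using $\|v/\ell^\alpha\|_{L^2}$, the parabolic term (after integration by parts back onto $v$) by $\|g\|_{L^\infty}\|\xi^\nu_s/\ell^\alpha\|_{L^2}\|v_s/\ell^\alpha\|_{L^2}$ plus lower-order contributions from $g_s$ and $\ell_s$, and the $F_2$ term by $\|F_2/\ell^{\alpha-1}\|_{L^2}\|v/\ell^{\alpha+1}\|_{L^2}$. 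All such factors on $v$ are dominated by $\|v\|_{H^1_{\alpha+1,0}}$, while the remaining factors are, by (\ref{Linftyxinu}) and (\ref{L2L2xinu}) just established, controlled in $L^2(0,T)$ by the RHS of (\ref{L2xinu_t}); a further Cauchy--Schwarz in $t$ yields the desired bound.

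The main obstacle is the precise balancing of the critical weights: one must ensure that \emph{every} appearance of the non-time-integrable coefficient $1/s^2$ in the RHS of (\ref{xigal}) is matched against the $\alpha\sigma$ gain on the LHS arising from $\partial_t\ell$, without the loss of a time-integrable constant. This is delicate because the interaction between the $t$-dependent basis $w_k=\ell^\alpha u_k$ and the singular coefficients forces the pointwise estimates for $\eta,\eta_s$ established in \S\ref{modetaest} (which themselves required $\alpha,\sigma$ to be chosen already) to enter on the RHS, so one must verify that the threshold values of $\alpha,\sigma$ dictated by the $\xi^\nu$-estimates are compatible with those that arose for $\eta$ --- taking $\alpha,\sigma$ still larger if needed, with $T$ shrunk accordingly to keep $\int_0^TA^2(\tau)d\tau$ and $\sqrt T$ small.
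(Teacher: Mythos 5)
Your proposal is correct and follows essentially the same route as the paper: the energy identity for $\|\xi^\nu\|^2_{L^2_\alpha}$ with the negative critical term extracted from $\partial_t\ell$, the parabolic gain $-\tfrac12\|\xi^\nu_s/\ell^\alpha\|^2_{L^2}$ from the lower bound $g>\tfrac12$, distribution of the singular $1/s$-weights via (\ref{l/s}), the previously established $\eta$-estimates of \S\ref{modetaest}, a Gronwall closing argument, and Cauchy--Schwarz for (\ref{L2xinu_t}). One small correction of phrasing only: in (\ref{xigal}) the diffusion term is already written in its integrated-by-parts form $-\big(g\xi^\nu_s,\partial_s w_k\big)_{L^2_\alpha}+\ldots$, so pairing with $\xi^\nu$ (or with $v$) produces the coercive term directly without any additional integration by parts.
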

\begin{proof}
As in (\ref{L2eta}), by (\ref{partial_tds}) and (\ref{l_t}) we have
\begin{align}\label{L2xinu}
\frac{1}{2}\frac{d}{dt}\|\xi^\nu\|^2_{L^2_\alpha}
\leq&\int\frac{\xi^\nu\xi^\nu_t}{\ell^{2\alpha}}ds
-\alpha\sigma\|\frac{\xi^\nu}{\ell^{\alpha+1}}\|^2_{L^2(s_{min},s_0)}\\
&\notag+C\alpha\|\frac{\xi^\nu}{\ell^{\alpha+1}}\|^2_{L^2(s_{min},s_0)}
+C\alpha\|\frac{\xi^\nu}{\ell^\alpha}\|^2_{L^2}
+C\|\frac{\xi^\nu}{s\ell^\alpha}\|^2_{L^2}
\end{align}
Recall that from (\ref{l/s}) we estimate the last term as
\begin{align*}
\|\frac{\xi^\nu}{s\ell^\alpha}\|^2_{L^2}
\leq C\sigma\|\frac{\xi^\nu}{\ell^{\alpha+1}}\|^2_{L^2(s_{min},s_0)}
+C\|\frac{\xi^\nu}{\ell^\alpha}\|^2_{L^2(s_0,+\infty)},
\end{align*}
using the fact $\ell=O(1)$, $x\in(0,+\infty)$; see Definition \ref{weight}, (\ref{ints1}).
Recall the asymptotics of the coefficients
(\ref{coeff}), (\ref{coeffinfty}) and the poinwise estimate on f (\ref{f}).
Since $\xi^\nu$ is a linear combination (for each fixed $t$)
of the $w_k$'s, $k=1,\ldots,\nu$, by definition
of the inner product in $L^2_\alpha$  and (\ref{xigal}) we derive
\begin{align*}
\bullet&\;\;\int\frac{\xi^\nu\xi^\nu_t}{\ell^{2\alpha}}ds=
\big((\frac{\psi_{ss}}{{\psi}}+(n-1)\frac{\psi^2_s}{\psi^2})f\cdot(\eta+\xi^\nu),\xi^\nu\big)_{L^2_\alpha}
+\big(\frac{\psi_s}{\psi}f\xi^\nu_s,\xi^\nu\big)_{L^2_\alpha}\\
&-\big(g_s\xi^\nu_s,\xi^\nu\big)_{L^2_\alpha}-\big(g\xi^\nu_s,\xi^\nu_s\big)_{L^2_\alpha}
+2\alpha\big(g\xi^\nu_s,\xi^\nu\frac{\ell_s}{\ell}\big)_{L^2_\alpha}
\notag+\big(\frac{\psi_s}{\psi}f\eta_s,\xi^\nu\big)_{L^2_\alpha}
+\big(F_2,\xi^\nu\big)_{L^2_\alpha}\\
\leq&
\;C\|\frac{\xi^\nu}{s\ell^\alpha}\|^2_{L^2}
+C\|\frac{\eta}{s\ell^\alpha}\|^2_{L^2}
+\varepsilon\|\frac{\xi^\nu_s}{\ell^\alpha}\|^2_{L^2}
+\frac{C}{\varepsilon}\|\frac{\xi^\nu}{s\ell^\alpha}\|^2_{L^2}
-\big(g_s\xi^\nu_s,\xi^\nu\big)_{L^2_\alpha}\\
\tag{using the pointwise bounds (\ref{f}) of $g$}
&-\frac{1}{2}\|\frac{\xi^\nu_s}{\ell^\alpha}\|^2_{L^2}
+\varepsilon\|\frac{\xi^\nu_s}{\ell^\alpha}\|^2_{L^2}
+\frac{C}{\varepsilon}\alpha^2\|\frac{\xi^\nu}{\ell^{\alpha+1}}\|^2_{L^2}\\
&+\|\frac{\xi^\nu}{s\ell^\alpha}\|^2_{L^2}
+C\|\frac{\eta_s}{\ell^\alpha}\|^2_{L^2}
+C\|\frac{\xi^\nu}{\ell^{\alpha+1}}\|^2_{L^2}
+\|\frac{F_2}{\ell^{\alpha-1}}\|^2_{L^2}\\
\tag{utilizing (\ref{l/s}), $\alpha\ge1$}
\leq&\;\frac{C}{\varepsilon}(\sigma+\alpha^2)\|\frac{\xi^\nu}{\ell^{\alpha+1}}\|^2_{L^2(s_{min},s_0)}
+\frac{C}{\varepsilon}\alpha^2\|\frac{\xi^\nu}{\ell^\alpha}\|^2_{L^2(s_0,+\infty)}\\
&+C\sigma\|\frac{\eta}{\ell^{\alpha+1}}\|^2_{L^2(s_{min},s_0)}
+C\|\frac{\eta}{\ell^\alpha}\|^2_{L^2(s_0,+\infty)}
+C\|\frac{\eta_s}{\ell^\alpha}\|^2_{L^2(s_{min},s_0)}\\
&+C\|\frac{\eta_s}{\ell^{\alpha-1}}\|^2_{L^2(s_0,+\infty)}
+(2\varepsilon-\frac{1}{2})\|\frac{\xi^\nu_s}{\ell^\alpha}\|^2_{L^2}
+\|\frac{F_2}{\ell^{\alpha-1}}\|^2_{L^2}
-\big(g_s\xi^\nu_s,\xi^\nu\big)_{L^2_\alpha}
\end{align*}
In order to estimate the remaining term
we will need the following $L^\infty$ bound:
As in the derivation of (\ref{pdxi}),
by the boundary condition we imposed on our space we get
\begin{align*}
\|\frac{\xi^\nu}{\ell^k}\|^2_{L^\infty(s)}
\leq C\big(\|\frac{\xi^\nu_s}{\ell^\alpha}\|^2+(k+1)\|\frac{\xi^\nu}{\ell^{\alpha+1}}\|^2\big),
\end{align*}
provided $0\leq k\leq\alpha$. Now we finish the argument by readily estimating
\begin{align*}
-\big(g_s\xi^\nu_s,\xi^\nu\big)_{L^2_\alpha}
\leq&\;\varepsilon\|\frac{\xi^\nu_s}{\ell^\alpha}\|^2_{L^2}
+\frac{1}{\varepsilon}\|\frac{\xi^\nu}{\ell}\|^2_{L^\infty(s)}\|\frac{g_s}{\ell^{\alpha-1}}\|^2_{L^2}\\
\leq&\;(\varepsilon+\frac{C}{\varepsilon}\tilde{\varepsilon})\|\frac{\xi^\nu_s}{\ell^\alpha}\|^2_{L^2},
\end{align*}
where we made use of the $L^2_{\alpha-1}$ estimate on $g$ (\ref{f}).
Combining the above estimates and plugging into (\ref{L2xinu}) we deduce
\begin{align}\label{sumxinu1}
&\frac{1}{2}\frac{d}{dt}\|\xi^\nu\|^2_{L^2_\alpha}
+\alpha\sigma\|\frac{\xi^\nu}{\ell^{\alpha+1}}\|^2_{L^2(s_{min},s_0)}\\
\notag\leq&\;\frac{C}{\varepsilon}(\sigma+\alpha^2)\|\frac{\xi^\nu}{\ell^{\alpha+1}}\|^2_{L^2(s_{min},s_0)}
+\frac{C}{\varepsilon}\|\xi^\nu\|^2_{L^2_\alpha}
+(3\varepsilon+\frac{C}{\varepsilon}\tilde{\varepsilon}-\frac{1}{2})\|\frac{\xi^\nu_s}{\ell^\alpha}\|^2_{L^2}\\
&+C\sigma\|\frac{\eta}{\ell^{\alpha+1}}\|^2_{L^2(s_{min},s_0)}+C\|\frac{\eta_s}{\ell^\alpha}\|^2_{L^2(s_{min},s_0)}
+C\|\eta\|^2_{L^2_\alpha}+C\|\eta_s\|^2_{L^2_{\alpha-1}}
\notag+\|\frac{F_2}{\ell^{\alpha-1}}\|^2_{L^2},
\end{align}
Let $\varepsilon,\tilde{\varepsilon}$ be sufficiently small and $\alpha,\sigma$ large satisfying
\begin{align*}
3\varepsilon+\frac{C}{\varepsilon}\tilde{\varepsilon}<\frac{1}{4}&&
\frac{1}{4}\alpha>\frac{C}{\varepsilon}&&\frac{1}{4}\sigma>\frac{C}{\varepsilon}\alpha.
\end{align*}
Then, integrating on $[0,t]$, $t\leq T$, and invoking the estimates on $\eta$ (\ref{Linftymodeta}),
(\ref{L2L2modeta}), (\ref{Linftymodeta_s}), (\ref{L2L2modeta_s}) we conclude
\begin{align}\label{sumxinu2}
&\frac{1}{2}\|\xi^\nu\|^2_{L^2_\alpha}
+\frac{1}{2}\alpha\sigma\int^t_0\|\frac{\xi^\nu}{\ell^{\alpha+1}}\|^2_{L^2(s_{min},s_0)}d\tau
+\frac{1}{4}\int^t_0\|\frac{\xi^\nu_s}{\ell^\alpha}\|^2_{L^2}d\tau\\
\notag\leq&\;\frac{1}{2}\|\xi^\nu_0\|^2_{L^2_\alpha}+C\int^t_0\|\xi^\nu\|^2_{L^2_\alpha}d\tau
+C\sigma\int^T_0\|\frac{\eta}{\ell^{\alpha+1}}\|^2_{L^2(s_{min},s_0)}
+C\int^T_0\|\frac{\eta_s}{\ell^\alpha}\|^2_{L^2(s_{min},s_0)}d\tau\\
&+C\int^T_0\big(\|\eta\|^2_{L^2_\alpha}+\|\eta_s\|^2_{L^2_{\alpha-1}}\big)d\tau
\notag+\int^T_0\|\frac{F_2}{\ell^{\alpha-1}}\|^2_{L^2}d\tau\\
\notag\leq&\;\frac{1}{2}\|\xi^\nu_0\|^2_{L^2_\alpha}+C\int^t_0\|\xi^\nu\|^2_{L^2_\alpha}d\tau
+C\frac{\widetilde{C}}{\alpha\sigma}\bigg[\mathcal{E}_0
+\sum\int^T_0\|\frac{F_i}{\ell^{\alpha-1}}\|^2_{L^2}d\tau+\int^T_0G_2(\tau)d\tau\bigg]\\
&+CT\frac{\widetilde{C}}{5}\bigg[\mathcal{E}_0
+\sum\int^T_0\|\frac{F_i}{\ell^{\alpha-1}}\|^2_{L^2}d\tau+\int^T_0G_2(\tau)d\tau\bigg]
\notag+\int^T_0\|\frac{F_2}{\ell^{\alpha-1}}\|^2_{L^2}d\tau,
\end{align}
where $\xi^\nu_0:=\xi^\nu(x,0)$. Since $\{w_k\}_{k\in\mathbb{Z}}$ is an orthonormal
basis of $L^2_\alpha$, from (\ref{xinu}) it follows
\begin{align*}
\|\xi^\nu_0\|^2_{L^2_\alpha}\leq\|\xi_0\|^2_{L^2_\alpha}.
\end{align*}
Thus, by Gronwall's inequality, $t\in[0,T]$, we have ($T>0$ small, $\widetilde{C}$ large)
\begin{align}\label{xinuest}
\|\xi^\nu\|^2_{L^\infty(0,T;L^2_\alpha(s))}
+\int^T_0\|\frac{\xi^\nu_s}{\ell^\alpha}&\|^2_{L^2}dt\\
\notag\leq&\;C\frac{\widetilde{C}}{\alpha\sigma}\bigg[\mathcal{E}_0
+\sum\int^T_0\|\frac{F_i}{\ell^{\alpha-1}}\|^2_{L^2}dt+\int^T_0G_2(t)dt\bigg].
\end{align}
For $\alpha$ sufficiently large, together with (\ref{sumxinu2}), we achieve
the required estimates (\ref{Linftyxinu}), (\ref{L2L2xinu}). Finally, setting
$v=\sum^\nu_{k=1}d_k(t)w_k$ and going back
to (\ref{xigal}) we apply the same arguments as above to obtain the estimate (for a.e. $t\in[0,T]$)
\begin{align}
\big(\xi_t^\nu,v\big)_{L^2_\alpha}\leq&\;
C\big(\|\frac{v}{\ell^{\alpha+1}}\|_{L^2}+\|\frac{v_s}{\ell^\alpha}\|_{L^2}\big)
\bigg[\|\frac{\eta}{s^2\ell^{\alpha-1}}\|_{L^2}
+\|\frac{\xi^\nu}{s^2\ell^{\alpha-1}}\|_{L^2}+\|\frac{\xi^\nu_s}{s\ell^{\alpha-1}}\|_{L^2}\\
&+\alpha^2\|\frac{\xi^\nu_s}{\ell^\alpha}\|_{L^2}
+\|\frac{\eta_s}{s\ell^{\alpha-1}}\|_{L^2}+\|\frac{F_2}{\ell^{\alpha-1}}\|_{L^2}\bigg]
\end{align}
Integrate now on $[0,T]$, apply C-S and take squares both sides.
Employing first the comparison estimate (\ref{l/s}), then the estimates on $\eta$ (\ref{Linftymodeta}),
(\ref{L2L2modeta}), (\ref{Linftymodeta_s}), (\ref{L2L2modeta_s}) and the just derived estimates
of $\xi^\nu$ (\ref{xinuest}), (\ref{L2L2xinu}) we arrive at (\ref{L2xinu_t}), for $\alpha,\sigma$ large enough.
\end{proof}
The estimates in Proposition \ref{enestxinu}
suffice to pass to a subsequence (applying a diagonal argument due to (\ref{L2xinu_t}))
and take weak limits in (\ref{xigal}), after integrating on $[0,T]$, to obtain
a weak solution $\xi$ of (\ref{modlpde}), in the sense of (\ref{xi_t}), and
in the desired spaces (\ref{xisp})
\begin{align}\label{xisp}
\xi\in L^\infty(0,T;L^2_\alpha(s))\cap L^2(0,T;H^1_{\alpha+1,0}(s))&&
\ell^2\xi_t\in L^2(0,T;H^{-1}_{\alpha+1}(s)),
\end{align}
satisfying the energy estimates
\begin{align}\label{Linftyxi}
\|\xi\|^2_{L^\infty(0,T;L^2_\alpha(s))}
+\int^T_0\|\frac{\xi_s}{\ell^\alpha}&\|^2_{L^2}dt\\
\notag\leq&\;\frac{\widetilde{C}}{10}\bigg[\mathcal{E}_0
+\sum\int^T_0\|\frac{F_i}{\ell^{\alpha-1}}\|^2_{L^2}dt+\int^T_0G_2(t)dt\bigg]
\end{align}
and
\begin{align}\label{L2L2xi}
\int^T_0\|\frac{\xi}{\ell^{\alpha+1}}\|^2_{L^2(s_{min},s_0)}dt
\leq\frac{\widetilde{C}}{\alpha\sigma^2}\bigg[\mathcal{E}_0
+\sum\int^T_0\|\frac{F_i}{\ell^{\alpha-1}}\|^2_{L^2}dt+\int^T_0G_2(t)dt\bigg].
\end{align}
\subsection{Improved regularity and energy estimates for $\xi$}\label{xiimpr}
\noindent
We now show that $\xi$ is a strong solution of (\ref{modlpde}).
Consider a positive time $0<t_0<T$. Looking at the second equation of (\ref{modlpde})
for $t\in[t_0,T]$, we notice that those coefficients which involve $\psi$ and its
derivatives are smooth and bounded, while $f,g\in L^\infty(0,T;H^1(s))$ by consideration (\ref{f,Freg}).
Moreover, from \S\ref{modetaest} we have
$\eta\in L^\infty(0,T;H^1)$ and by assumption $F_i\in L^2(0,T;L^2(s))$, $i=1,2$.
Hence, by standard theory of parabolic equations \cite[\S7.1 Theorem 5]{E},
the solution $\xi$ of (\ref{modlpde}) that we established above (\ref{xisp}),
having ``initial data'' $\xi(s,t_0)\in H^1(s)$ (for a.e. $0<t_0<T$), attains
interior regularity
\begin{align*}
\xi\in L^\infty(t_0,T;H^1_0(s))\cap L^2(t_0,T;H^2(s))&&\xi_t\in L^2(t_0,T;L^2(s))
\end{align*}
Since $t_0\in(0,T)$ is arbitrary, the following derivations make sense. Recall
that for fixed $t>0$, the weight $\ell^2$ is bounded below; see
Definition \ref{weight}.

We proceed to derive the regularity
\begin{align*}
\xi\in L^\infty(0,T;H^1_{\alpha,0})\cap L^2(0,T;H^2_{\alpha+1})&&
\xi_t\in L^2(0,T;L^2_{\alpha-1})
\end{align*}
and the remaining energy estimates we claimed in \S\ref{modsolproof}
for our solution $\xi$ (\ref{xisp}) of (\ref{modlpde}).
Similarly to (\ref{L2dxi_s}),
utilizing the boundary condition in (\ref{modlpde}) and (\ref{l_t}),
(\ref{partial_tds}), (\ref{[s,t]}), (\ref{l_s}) we derive (for a.e. $t\in[0,T]$)
\begin{align}\label{L2xi_s}
\notag\frac{1}{2}\frac{d}{dt}\|\xi_s\|^2_{L^2_{\alpha-1}}
=&\int\frac{\xi_s}{\ell^{2\alpha-2}}(\partial_s\xi_t
-n\frac{\psi_{ss}}{\psi}\xi_s)ds
-(\alpha-1)\int\frac{\xi^2_s}{\ell^{2\alpha-1}}\partial_t\ell ds
+\frac{1}{2}\int\frac{\xi^2_s}{\ell^{2\alpha-2}}\partial_tds\\
\leq&-\int\frac{\xi_{ss}\xi_t}{\ell^{2\alpha-2}}ds
+(2\alpha-2)\int\frac{\xi_s\xi_t}{\ell^{2\alpha-1}}\ell_sds
+C\|\frac{\xi_s}{s\ell^{\alpha-1}}\|^2_{L^2}\\
&-(\alpha-1)\sigma\|\frac{\xi_s}{\ell^\alpha}\|^2_{L^2(s_{min},s_0)}
\notag+C(\alpha-1)\|\frac{\xi_s}{\ell^\alpha}\|^2_{L^2}
+C\|\frac{\xi_s}{s\ell^{\alpha-1}}\|^2_{L^2}
\end{align}
As usual, we replace the weight $s$ with $\ell$ employing
the comparison estimate (\ref{l/s})
\begin{align*}
\|\frac{\xi_s}{s\ell^{\alpha-1}}\|^2_{L^2}\leq
C\sigma\|\frac{\xi_s}{\ell^\alpha}\|^2_{L^2(s_{min},s_0)}+
C\|\frac{\xi_s}{\ell^{\alpha-1}}\|^2_{L^2(s_0,+\infty)}
\end{align*}
There are two terms we need estimate. For what follows we recall
the estimates on the coefficients (\ref{coeff}), (\ref{coeffinfty}). Also,
note that by (\ref{l_s}) we get the crude (but sufficient) estimate $\partial_s\ell=O(1)$.
\begin{align*}
\tag{plugging in the RHS of the second equation of (\ref{modlpde})}
\bullet&\;\;(2\alpha-2)\int\frac{\xi_s\xi_t}{\ell^{2\alpha-1}}\ell_sds\\
=&\;(2\alpha-2)\int\frac{\xi_s}{\ell^{2\alpha-1}}\ell_s\bigg[
(\frac{\psi_{ss}}{{\psi}}+(n-1)\frac{\psi^2_s}{\psi^2})f\cdot(\eta+\xi)
+\frac{\psi_s}{\psi}f\xi_s+g\xi_{ss}+\frac{\psi_s}{\psi}f\eta_s+F_2
\bigg]ds\\
\leq&\;\alpha^2\|\frac{\xi_s}{\ell^\alpha}\|^2_{L^2}
+C\|\frac{\eta}{s^2\ell^{\alpha-1}}\|^2_{L^2}
+C\|\frac{\xi}{s^2\ell^{\alpha-1}}\|^2_{L^2}
+C\|\frac{\xi_s}{s\ell^{\alpha-1}}\|^2_{L^2}
+\varepsilon\|\frac{\xi_{ss}}{\ell^{\alpha-1}}\|^2_{L^2}
+\frac{C}{\varepsilon}\alpha^2\|\frac{\xi_s}{\ell^\alpha}\|^2_{L^2}\\
\tag{using the $L^\infty$ estimate (\ref{f})}
&+C\alpha^2\|\frac{\xi_s}{\ell^\alpha}\|^2_{L^2}
+\|\frac{\eta_s}{s\ell^{\alpha-1}}\|^2_{L^2}
+\|\frac{F_2}{\ell^{\alpha-1}}\|^2_{L^2}\\
\tag{by (\ref{l/s}), $\alpha\ge1$}
\leq&\;\frac{C}{\varepsilon}\alpha^2\|\frac{\xi_s}{\ell^\alpha}\|^2_{L^2}
+C\sigma^2\|\frac{\eta}{\ell^{\alpha+1}}\|^2_{L^2(s_{min},s_0)}
+C\|\frac{\eta}{\ell^{\alpha+1}}\|^2_{L^2(s_0,+\infty)}\\
&+C\sigma^2\|\frac{\xi}{\ell^{\alpha+1}}\|^2_{L^2(s_{min},s_0)}
+C\|\frac{\xi}{\ell^{\alpha+1}}\|^2_{L^2(s_0,+\infty)}
+\varepsilon\|\frac{\xi_{ss}}{\ell^{\alpha-1}}\|^2_{L^2}
+C\sigma\|\frac{\eta_s}{\ell^\alpha}\|^2_{L^2(s_{min},s_0)}\\
\tag{$0<\varepsilon<1$}
&+C\|\frac{\eta_s}{\ell^\alpha}\|^2_{L^2(s_0,+\infty)}
+\|\frac{F_2}{\ell^{\alpha-1}}\|^2_{L^2}
\end{align*}
Next, we estimate the term
\begin{align*}
\tag{substituting from (\ref{modlpde})}
\bullet\;\;\;&-\int\frac{\xi_{ss}\xi_t}{\ell^{2\alpha-2}}ds\\
=&\;-\int\frac{\xi_{ss}}{\ell^{2\alpha-2}}\bigg[
(\frac{\psi_{ss}}{{\psi}}+(n-1)\frac{\psi^2_s}{\psi^2})f\cdot(\eta+\xi)
+\frac{\psi_s}{\psi}f\xi_s+g\xi_{ss}+\frac{\psi_s}{\psi}f\eta_s+F_2
\bigg]ds\\
\tag{employing (\ref{f}) again for $f,g$}
\leq&\;\varepsilon\|\frac{\xi_{ss}}{\ell^{\alpha-1}}\|^2_{L^2}
+\frac{C}{\varepsilon}\|\frac{\eta}{s^2\ell^{\alpha-1}}\|^2_{L^2}
+\frac{C}{\varepsilon}\|\frac{\xi}{s^2\ell^{\alpha-1}}\|^2_{L^2}\\
&+\frac{C}{\varepsilon}\|\frac{\xi_s}{s\ell^{\alpha-1}}\|^2_{L^2}
-\frac{1}{2}\|\frac{\xi_{ss}}{\ell^{\alpha-1}}\|^2_{L^2}
+\varepsilon\|\frac{\xi_{ss}}{\ell^{\alpha-1}}\|^2_{L^2}
+\frac{C}{\varepsilon}\|\frac{\eta_s}{s\ell^{\alpha-1}}\|^2_{L^2}
+\frac{1}{\varepsilon}\|\frac{F_2}{\ell^{\alpha-1}}\|^2_{L^2}\\
\tag{applying (\ref{l/s})}
\leq&\;(2\varepsilon-\frac{1}{2})\|\frac{\xi_{ss}}{\ell^{\alpha-1}}\|^2_{L^2}
+\frac{C}{\varepsilon}\sigma^2\|\frac{\eta}{\ell^{\alpha+1}}\|^2_{L^2(s_{min},s_0)}
+\frac{C}{\varepsilon}\|\frac{\eta}{\ell^{\alpha+1}}\|^2_{L^2(s_0,+\infty)}\\
&+\frac{C}{\varepsilon}\sigma^2\|\frac{\xi}{\ell^{\alpha+1}}\|^2_{L^2(s_{min},s_0)}
+\frac{C}{\varepsilon}\|\frac{\xi}{\ell^{\alpha+1}}\|^2_{L^2(s_0,+\infty)}
+\frac{C}{\varepsilon}\sigma\|\frac{\xi_s}{\ell^\alpha}\|^2_{L^2(s_{min},s_0)}\\
&+\frac{C}{\varepsilon}\|\frac{\xi_s}{\ell^{\alpha-1}}\|^2_{L^2(s_0,+\infty)}
+\frac{C}{\varepsilon}\sigma\|\frac{\eta_s}{\ell^\alpha}\|^2_{L^2(s_{min},s_0)}
+\frac{C}{\varepsilon}\|\frac{\eta_s}{\ell^{\alpha-1}}\|^2_{L^2(s_0,+\infty)}
+\frac{1}{\varepsilon}\|\frac{F_2}{\ell^{\alpha-1}}\|^2_{L^2},
\end{align*}
Plugging the above estimates into (\ref{L2xi_s}) we deduce
\begin{align}\label{summodxi_s1}
&\frac{1}{2}\frac{d}{dt}\|\xi_s\|^2_{L^2_{\alpha-1}}
+(\alpha-1)\sigma\|\frac{\xi_s}{\ell^\alpha}\|^2_{L^2(s_{min},s_0)}\\
\notag\leq&\;\frac{C}{\varepsilon}(\alpha^2+\sigma)\|\frac{\xi_s}{\ell^\alpha}\|^2_{L^2(s_{min},s_0)}
+\frac{C}{\varepsilon}\alpha^2\|\xi_s\|^2_{L^2_{\alpha-1}}
+(3\varepsilon-\frac{1}{2})\|\frac{\xi_{ss}}{\ell^{\alpha-1}}\|^2_{L^2}\\
\notag&+C\sigma^2\big(\|\frac{\eta}{\ell^{\alpha+1}}\|^2_{L^2(s_{min},s_0)}
+\|\frac{\xi}{\ell^{\alpha+1}}\|^2_{L^2(s_{min},s_0)}\big)
+C\sigma\big(\|\frac{\eta_s}{\ell^\alpha}\|^2_{L^2(s_{min},s_0)}
+\|\frac{\xi_s}{\ell^\alpha}\|^2_{L^2(s_{min},s_0)}\big)\\
\notag&+C\big(\|\eta\|^2_{L^2_\alpha}+\|\xi\|^2_{L^2_\alpha}+\|\eta_s\|^2_{L^2_{\alpha-1}}\big)
+(\frac{1}{\varepsilon}+1)\|\frac{F_2}{\ell^{\alpha-1}}\|^2_{L^2}
\end{align}
Let $\varepsilon$ be small and $\alpha,\sigma$ large such that
\begin{align*}
3\varepsilon<\frac{1}{4}&&\frac{1}{4}(\alpha-1)>\frac{C}{\varepsilon}&&
\frac{1}{4}\sigma>\frac{C}{\varepsilon}\frac{\alpha^2}{\alpha-1}
\end{align*}
Then, integrating on $[0,t]$, $t\leq T$, and invoking the derived estimates in
the previous subsections for $\eta$ (\ref{Linftymodeta}), (\ref{L2L2modeta}),
(\ref{Linftymodeta_s}), (\ref{L2L2modeta_s}) and $\xi$ (\ref{Linftyxinu}),
(\ref{L2L2xinu}) we obtain
\begin{align}\label{summodxi_s2}
&\frac{1}{2}\|\xi_s\|^2_{L^2_{\alpha-1}}
+\frac{1}{2}(\alpha-1)\sigma\int^t_0\|\frac{\xi_s}{\ell^\alpha}\|^2_{L^2}d\tau
+\frac{1}{4}\int^t_0\|\frac{\xi_{ss}}{\ell^{\alpha-1}}\|^2_{L^2}d\tau\\
\notag\leq&\;\frac{1}{2}\|\partial_x\xi_0\|^2_{L^2_{\alpha-1}}
+C\alpha^2\int^t_0\|\xi_s\|^2_{L^2_{\alpha-1}}d\tau\\
&\notag+C(\frac{1}{\alpha}+T)\widetilde{C}\bigg[\mathcal{E}_0
+\sum\int^T_0\|\frac{F_i}{\ell^{\alpha-1}}\|^2_{L^2}d\tau+\int^T_0G_2(\tau)d\tau\bigg]
+C\int^T_0\|\frac{F_2}{\ell^{\alpha-1}}\|^2_{L^2}d\tau
\end{align}
Thus, Gronwall's inequality, $t\in[0,T]$, we finally conclude (for $T>0$ small, $\alpha$ large)
\begin{align}\label{Linftymodxi_s}
\|\xi_s\|^2_{L^\infty(0,T;L^2_{\alpha-1}(s))}
+&\int^T_0\|\frac{\xi_{ss}}{\ell^{\alpha-1}}\|^2_{L^2}d\tau\\
\notag\leq&\;\frac{\widetilde{C}}{10}\bigg[\mathcal{E}_0
+\sum\int^T_0\|\frac{F_i}{\ell^{\alpha-1}}\|^2_{L^2}d\tau+\int^T_0G_2(\tau)d\tau\bigg]
\end{align}
and
\begin{align}\label{L2L2modxi_s}
\int^T_0\|\frac{\xi_s}{\ell^\alpha}\|^2_{L^2(s_{min},s_0)}d\tau
\leq\frac{\widetilde{C}}{(\alpha-1)\sigma}\bigg[\mathcal{E}_0
+\sum\int^T_0\|\frac{F_i}{\ell^{\alpha-1}}\|^2_{L^2}d\tau+\int^T_0G_2(\tau)d\tau\bigg].
\end{align}
This concludes our proof of the linear step.


\begin{thebibliography}{99}

\bibitem{AK1} Sigurd Angenent,  Dan Knopf, \emph{An example of neckpinching for Ricci flow 
on $\mathbb{S}^{n+1}$,} Math. Res. Lett. 11 (2004), no. 4, 493--518.

\bibitem{AK2} Sigurd Angenent,  Dan Knopf, \emph{Precise asymptotics in the Ricci flow neck 
pinch. Comm.  Anal. Geom.} 
(2007),  15,  773–-844.


\bibitem{ACK} Sigurd  Angenent, M. Cristina Caputo, Dan Knopf
\textit{Minimally invasive surgery for Ricci flow singularities},
J. Reine Angew. Math. (Crelle), 672 (2012), 39--87.

\bibitem{BMR} Josef Bemelmans, Min-Oo, Ernst A. Ruh
\textit{Smoothing Riemannian metrics}, Math. Z.
188 (1984), no. 1, 69--74

\bibitem{CRW} Esther Cabezas-Rivas and Burkhard Wilking,
\emph{How to produce a Ricci flow via Cheeger-Gromoll exhaustion},
	arXiv:1107.0606


\bibitem{RFTA} Bennett Chow, Sun-Chin Chu,
David Glickenstein, Christine Guenther, James
Isenberg, Tom Ivey, Dan Knopf, Peng Lu, Feng Luo, Lei Ni,
\textit{The Ricci Flow: Techniques and Applications, Part I: Geometric Aspects},
AMS, 2007.

\bibitem{CD} Xiuxiong Chen, Weiyue Ding, 
\textit{Ricci flow on surfaces with degenerate initial metrics},
J. Partial Differential Equations 20 (2007), no. 3, 193--202.

\bibitem{CTZ} Xiuxiong Chen, Gang Tian,  Zhou Zhang,
\textit{On the weak K\"ahler-Ricci flow},
Trans. Amer. Math. Soc., 363 (2011), no. 6, 2849--2863.


\bibitem{CL} Earl A. Coddington, Norman Levinson,
\textit{Theory of Ordinary Differential Equations},
Krieger Pub. Co., 1984.



\bibitem{DW1} Andrew S. Dancer, Mckenzie Y. Wang, \emph{On Ricci solitons of cohomogeneity one}
Ann. Glob. Anal. Geom., 39, (2011), 259--292

\bibitem{DW2} Andrew S. Dancer, McKenzie Y. Wang,
\textit{Some new examples of non-K\"ahler Ricci solitons},
Math. Res. Lett. 16 (2009), no. 2, 349--363.


\bibitem{DW3} Andrew S. Dancer, McKenzie Y. Wang,
\emph{Non-Kähler Expanding Ricci Solitons},  Intl. Math. Res. Notices, (2009), 1107--1133


\bibitem{E} Lawrence C. Evans,
\textit{Partial Differential Equations},
AMS, 2nd ed., 2010.

\bibitem{GT1} Gregor Giesen, Peter M. Topping,
\textit{Existence of Ricci flows of incomplete surfaces},
Comm. Partial Differential Equations 36 (2011), no. 10, 1860--1880.

\bibitem{GT2} Gregor Giesen, Peter M. Topping,
\textit{Ricci flows with bursts of
unbounded curvature}, arXiv:1302.5686.

\bibitem{IMS} James Isenberg, Rafe Mazzeo, Natasa Sesum,
\textit{Ricci flow in two dimensions},
Surveys in geometric analysis and relativity, 259--280,
Adv. Lect. Math. (ALM), 20, Int. Press, Somerville, MA, 2011.

\bibitem{KL} Herbert Koch, Tobias Lamm,
\textit{Geometric flows with rough initial data},
Asian J. Math. 16 (2012), no. 2, 209--235. 


\bibitem{S1} Miles Simon,
\textit{Deformation of ${\mathcal C}^0$ Riemannian metrics in the direction of their Ricci curvature},
Comm. Anal. Geom. 10 (2002), no. 5, 1033--1074. 

\bibitem{S2} Miles Simon, 
\textit{Ricci flow of almost non-negatively curved three manifolds},
J. Reine Angew. Math. 630 (2009), 177--217. 

\bibitem{T} P. Topping \emph{Ricci flow compactness via pseudolocality, and flows with incomplete initial metrics,}
 J.E.M.S., 12 (2010) 1429--1451.


\end{thebibliography}
\end{document}